\newcommand{\newdownfree}{\scalebox{1.5}[1]{\ensuremath{\downfree}}}
\newcommand{\newndownfree}{\scalebox{1.5}[1]{\ensuremath{\ndownfree}}}
\newcommandx{\margin}[2][1=]{\todo[linecolor=blue,backgroundcolor=blue!25,bordercolor=blue,#1]{#2}}
\def\Ind#1#2{#1\setbox0=\hbox{$#1x$} \kern\wd0    \hbox to 0pt{\hss$#1\newdownfree$\hss}\kern\wd0}
\def\Notind#1#2{#1\setbox0=\hbox{$#1x$}\kern\wd0 \hbox to 0pt{\hss$#1\newndownfree$\hss}\kern\wd0}
\newcommand{\acl}{\mathrm{acl}}
\newcommand{\dcl}{\mathrm{dcl}}
\newcommand{\cl}{\mathrm{cl}}
\newcommand{\eq}{\mathrm{eq}}
\newcommand{\elesub}{\preccurlyeq}
\newcommand{\res}{\mathord{\upharpoonright}} 
\newcommand{\ff}{\mathbb{F}}
\newcommand{\zz}{\mathbb{Z}}
\newcommand{\ACF}{\mathrm{ACF}}
\newcommand{\sD}{\mathscr{D}}
\newcommand{\sT}{\mathscr{T}}
\newcommand{\sM}{\mathscr{M}}
\newcommand{\sN}{\mathscr{N}}
\newcommand{\sC}{\mathscr{C}}
\newcommand{\Th}{\mathrm{Th}}
\newcommandx{\mult}{\mathrm{mult}}
\newcommand{\monster}{\boldsymbol{\sM}}
\newcommand{\ediag}{\mathrm{Ediag}}
\newcommand{\rs}{\mathrm{rs}}
\newcommand{\core}{\mathrm{es}}
\newsavebox\myboxA
\newsavebox\myboxB
\newlength\mylenA
\newcommand*\xbar[2][0.75]{%
    \sbox{\myboxA}{$\m@th#2$}%
    \setbox\myboxB\null
    \ht\myboxB=\ht\myboxA%
    \dp\myboxB=\dp\myboxA%
    \wd\myboxB=#1\wd\myboxA
    \sbox\myboxB{$\m@th\overline{\copy\myboxB}$}
    \setlength\mylenA{\the\wd\myboxA}
    \addtolength\mylenA{-\the\wd\myboxB}%
    \ifdim\wd\myboxB<\wd\myboxA%
       \rlap{\hskip 0.5\mylenA\usebox\myboxB}{\usebox\myboxA}%
    \else
        \hskip -0.5\mylenA\rlap{\usebox\myboxA}{\hskip 0.5\mylenA\usebox\myboxB}%
    \fi}
\newtheorem{thm}{Theorem}[section]
\newtheorem{lem}[thm]{Lemma}
\newtheorem*{thm*}{Theorem}
\newtheorem*{thmA*}{Theorem A}
\newtheorem*{prop*}{Proposition}
\newtheorem*{cor*}{Corollary}
\newtheorem*{corB*}{Corollary B}
\newtheorem*{corC*}{Corollary C}
\newtheorem{prop}[thm]{Proposition}
\newtheorem{cor}[thm]{Corollary}
\theoremstyle{definition}
\newtheorem{rem}[thm]{Remark}
\newtheorem{fact}[thm]{Fact}
\newtheorem{example}[thm]{Example}
\newtheorem{question}[thm]{Question}
\definecolor{red}{rgb}{1.0, 0, 0}
\begin{document}
\sloppy

\title[Interpolative fusions] {Interpolative fusions I}
\author{Alex Kruckman, Chieu-Minh Tran, Erik Walsberg}
\email{akruckman@wesleyan.edu, ewalsber@uci.edu, mtran6@nd.edu}
\subjclass[2010]{Primary 03C10; Secondary 03C40}
\date{\today}

\begin{abstract}
We define the interpolative fusion $T^*_\cup$ of a family $(T_i)_{i \in I}$ of first-order theories over a common reduct $T_\cap$, a notion that generalizes many examples of random or generic structures in the model-theoretic literature.
When each $T_i$ is model-complete, $T^*_\cup$ coincides with the model companion of $T_\cup = \bigcup_{i \in I} T_i$.
By obtaining sufficient conditions for the existence of $T^*_\cup$, we develop new tools to show that theories of interest have model companions.
\end{abstract}

\maketitle

\section{Introduction}

\noindent It is often desirable to decompose a mathematical structure into simpler components and analyze the structure in terms of how the components behave and interact. In this paper, we take the components to be reducts of the structure, and we are interested in situations when these reducts interact in a definably random fashion modulo some common agreements.
By  Theorem~\ref{thm: first Theorem} below, ``definably random'' in our sense agrees with ``generic'' in the sense of Robinson: e.g., if the first-order theory of each reduct is model-complete, then the original structure satisfies the model companion of the union of these theories.

\medskip\noindent In this paper, we introduce interpolative structures and interpolative fusions as an abstract framework for studying structures and theories that exhibit definably random/generic interactions between certain reducts. We observe that many examples in model theory can be put into this framework,  and we obtain sufficient conditions for first-order logic to be able to capture the aforementioned randomness/genericity. This yields new strategies to show that certain theories have model companions. In subsequent papers, we will  develop more machinery to determine model-theoretic properties of the structure or theory from those of its reducts.

\medskip \noindent Throughout, $I$ is an index set, $L_\cap$ is a first-order language, and $(L_i)_{i \in I}$ is a family of first order languages, all with the same set $S$ of sorts, such that $L_i \cap L_j =L_\cap$ for all distinct $i, j \in I$.
Let $T_i$ be a (possibly incomplete) $L_i$-theory for each $i \in I$,  and assume that each $T_i$ has the same set $T_\cap$ of $L_\cap$-consequences.
(This assumption is quite mild: given an arbitrary family of $L_i$-theories $(T_i)_{i\in I}$, we can extend each $T_i$ with the set of all $L_\cap$-consequences of $\bigcup_{i\in I} T_i$.) 
Set
$$  L_\cup = \bigcup_{i \in I} L_i \quad \text{and} \quad T_\cup = \bigcup_{i \in I} T_i,   $$
and assume that $T_\cup$ is consistent.
(Alternatively, we could just assume that $T_\cap$ is consistent, as these two assumptions are equivalent by Robinson joint consistency; see
Corollary~\ref{cor:interpolation}.)
Finally,  $\sM_\cup$ is an $L_\cup$-structure, $\sM_\square$ is the $L_\square$-reduct of $\sM_\cup$, and $X_\square$ ranges over $\sM_\square$-definable sets (with parameters) for $\square \in I \cup \{ \cap \}$.

\medskip \noindent
Suppose $J \subseteq I$ is finite and $X_i \subseteq M^n$ is $\sM_i$-definable for all $i \in J$.
Then $(X_i)_{i \in J}$ is {\bf separated} if there is a family  $(X^i)_{i \in J}$ of $\sM_\cap$-definable subsets of $M^n$ such that
\[X_i \subseteq X^i \text{ for all } i \in J, \text{and }  \bigcap_{i \in J} X^i =\emptyset .\] 

\noindent We say $\sM_\cup$ is \textbf{interpolative} if for all families $(X_i)_{i\in J}$ such that $J\subseteq I$ is finite and $X_i\subseteq M^n$ is $\sM_i$-definable for all $i\in J$, $(X_i)_{i \in J}$ is separated if and only if $\bigcap_{i\in J} X_i = \emptyset$.
If the class of interpolative models of $T_\cup$ is elementary with theory $T_\cup^*$, then we say that $T_\cup^*$ is the {\bf interpolative fusion} (of $(T_i)_{i \in I}$ over $T_\cap$).
We also say that ``$T^{*}_\cup$ exists'' if the class of interpolative $T_\cup$-models is elementary; this is not automatic, as the definition of an interpolative structure is not first-order.

\medskip \noindent
The name ``interpolative fusion'' comes from the following resemblance with Craig's interpolation theorem:  When $I = \{1,2\}$, $\sM_\cup$ is interpolative if  and only if for all $X_1 \subseteq X_2$, there is an $X_\cap$ such that
$X_1 \subseteq X_\cap$ and $X_\cap \subseteq X_2$.
A natural generalization of Craig's theorem allows us to deduce the following theorem in Section~\ref{sec:fusions}.

\begin{thm} \label{thm: first Theorem}
Suppose each $T_i$ is model-complete. 
Then $\sM_\cup \models T_\cup$ is interpolative if and only if it is existentially closed in the class of  $T_\cup$-models.  
Hence,  $T^*_\cup$ is precisely the model companion of $T_\cup$, if either of these exists.
\end{thm}

\noindent Theorem~\ref{thm: first Theorem} can  be seen as offering a semantic/geometric characterization of the existentially closed $T_\cup$-models and providing a path toward obtaining the model companion of $T_\cup$. For readers who see the notion of interpolative structures as directly reflecting definable randomness and thus fundamental on its own right, Theorem~\ref{thm: first Theorem} can also be read as an explanation for the model-theoretically tame behavior of interpolative structures under favorable conditions. When the $T_i$ are not model-complete, we can still 
view interpolative models as ``relatively existentially closed'' and $T_\cup^*$ as the ``relative model companion'' of $T_\cup$; see Theorem~\ref{thm:relativemc}.

\begin{example}\label{ex:exampleblock1}
The following are natural examples of interpolative fusions. See Section~\ref{sec:examples} for more details. 
\begin{enumerate}
\item The expansion of a theory $T$ by a generic predicate $P$~\cite{Cha-Pi}. This is the interpolative fusion of $T$ with the the theory of an infinite and co-infinite predicate $P$ over the theory of an infinite set. More generally, the model companion of the union of any two model complete theories in disjoint languages, as treated by Winkler in~\cite{Winkler}.

\item Algebraically closed fields with multiple independent valuations \cite{Lou-thesis,Johnson-thesis}. This is the interpolative fusion of multiple copies of $\mathrm{ACVF}$ (with distinct valuation symbols) over $\ACF$. More generally, the model companion of the theory of fields expanded by multiple structures (valuations, derivations, automorphisms, etc.).
\item The group of integers with multiple $p$-adic valuations \cite{AldE}. This is similar to the previous example, using a distinct symbol for each $p$-adic valuation.
\item Algebraically closed fields with generic multiplicative circular orders \cite{Minh-1}. This is the interpolative fusion of $\ACF$ and the theory of circularly ordered multiplicative groups of models of $\ACF$ over the theory of multiplicative groups of models of $\ACF$. If $\mathbb{F}$ is the algebraic closure of a finite field and $\triangleleft$ is any multiplicative circular order on $\mathbb{F}^\times$, then $(\mathbb{F},\triangleleft)$ is a model of this theory. 
\end{enumerate}
\end{example}

\noindent The model companion $T^*$ of a theory $T$ of interest is usually not of the form $T_\cup$ for any nontrivial choice of $(T_i)_{i \in I}$.
However, we can sometimes construct a family $(T_i)_{i \in I}$ of model-complete theories so that $T$ is existentially bi-interpretable with $T_\cup$. One can then deduce that $T_\cup^*$ exists and is existentially bi-interpretable with $T^*$, i.e., $T^*$ is essentially an interpolative fusion. See Section~\ref{sec:interp}
for relevant material, in particular, the precise definition of existential bi-interpretation and the fact that existential bi-interpretations preserve existence of model companions.

\begin{example} \label{ex:exampleblock2}
Each of the following examples is existentially bi-interpretable with an interpolative fusion. See Section~\ref{sec:examples} for details. 
\begin{enumerate}[leftmargin=*]
\item $\mathrm{ACFA}$, the theory of existentially closed difference fields~\cite{Cha-Hru}. More generally, the expansion of any theory by a generic automorphism~\cite{Cha-Pi}.

\item $\mathrm{DCF}_0$, the theory of differentially closed fields of characteristic $0$ ~\cite{RobinsonDCF}. More generally, theories of existentially closed $\sD$-fields in the sense of Moosa and Scanlon~\cite{Moosa-Scanlon}. 

\item The random graph. More generally, the Fra\"iss\'e limits of the classes of finite directed graphs, tournaments, $n$-hypergraphs, $k$-colored graphs etc.

\item Generic Skolemizations, as defined in ~\cite{Winkler} and further studied in~\cite{KRExp}.
\end{enumerate}
\end{example}

\noindent In the examples above, we can deduce existence of $T^*_\cup$ from the fact that $T$ has a model companion with results proven in Sections~\ref{sec:fusions} and~\ref{sec:interp}, but applications of these results can go in the other direction as well. The proof that a theory $T$ of interest has a model companion $T^*$ often involves obtaining a semantic/geometric characterization of the existentially closed models of $T$ and showing that this semantic/geometric characterization is first-order axiomatizable.
In many cases the semantic/geometric characterization is close to the notion of an interpolative structure.
So we may hope to prove the existence of $T_\cup^*$ directly, and then recover the existence of $T^*$.

\medskip \noindent The bulk of the technical work of this paper, Section~\ref{sec: Pseudo-topological base} onward, concerns general machinery that aims to actualize the last statement of the preceding paragraph.
This also provides the reader with the following new strategy to show that a theory $T$ has a model companion $T^*$: 
\begin{enumerate}
    \item Find $(T_i)_{i \in I}$ so that $T_\cup$ is existentially bi-interpretable with $T$;
    \item Use the machinery developed here to conclude that $T_\cup^*$ exists and then, with Corollary~\ref{cor:delta1-cor}, deduce that $T^*$ exists.
\end{enumerate}

\medskip \noindent In each of the examples above, the collection of definable subsets of $T_\cap$-models is equipped with an ordinal-valued dimension satisfying some natural conditions. We refer to this setting as ``pseudo-topological'' and investigate it in Section~\ref{sec: Pseudo-topological base}. We say that an arbitrary set $A$ is {\bf pseudo-dense} in $X_\cap$ if $A$ intersects every $\sM_\cap$-definable $Y_\cap \subseteq X_\cap$ such that $\dim Y_\cap = \dim X_\cap$, and we say that $X_\cap$ is a {\bf pseudo-closure} of $A$ if $A\subseteq X_\cap$ and $A$ is pseudo-dense in $X_\cap$. We say that $\sM_i$ \textbf{has pseudo-closures in $\sM_\cap$} if every $\sM_i$-definable set has a pseudo-closure, and we say $T_i$  \textbf{has pseudo-closures in $T_\cap$} if the same situation holds for every $T_i$-model. We say that $T_i$ {\bf defines pseudo-denseness} over $T_\cap$ if pseudo-denseness is uniformly definable.  

\medskip \noindent We obtain the following general conditions for the existence of interpolative fusions:

\begin{thm} \label{thm:mainsec3}
Suppose  $\dim$ is an ordinal dimension on $T_\cap$, $T_i$ defines pseudo-denseness over $T_\cap$, and $T_i$ has pseudo-closures in $T_\cap$ for all $i\in I$. Then $T^*_\cup$ exists.
\end{thm}

\noindent Without using the condition that $T_i$ defines pseudo-denseness over $T_\cap$ for $i \in I$ in the preceding theorem, we can still show that  $\sM_\cup \models T_\cup$ is interpolative if and only if $\bigcap_{i \in J} X_i \neq \emptyset$ whenever $J \subseteq I$ is finite, $X_i$ is $\sM_i$-definable for all $i \in J$, and there is some  $\sM_\cap$-definable set $X_\cap$ such that each $X_i$ is pseudo-dense in $X_\cap$. 
This property is first-order axiomatizable when pseudo-denseness is definable, and it yields a natural system of ``pseudo-topological'' axioms for $T^{*}_\cup$ .
The pseudo-topological axioms are essentially identical with known axiomatizations in many examples.

\medskip \noindent In Sections~\ref{section-tame-topology}, \ref{section:aleph-base}, and \ref{sec:stable-categorical}, we focus on more specific settings applicable to the examples listed above. We show that under further hypotheses on the theories and the notion of dimension, the general conditions of Theorem~\ref{thm:mainsec3} specialize to more familiar notions. 

\medskip \noindent Section~\ref{section-tame-topology} treats several settings where $T_\cap$ is equipped with natural topology compatible with the aforementioned dimension; the use of the term ``pseudo-topological'' is motivated by consideration of these special cases.
When $T_\cap$ is o-minimal and $\dim$ is the canonical o-minimal dimension, for example, any theory extending $T_\cap$ defines pseudo-denseness, and $T_i$ has pseudo-closures in $T_\cap$ if and only if $T_\cap$ is an \textit{open core} of $T_i$, i.e., the closure of any $\sM_i$-definable set is already $\sM_\cap$-definable.
This gives the following result. 

\begin{thm} 
\label{thm: mainsection5}
Suppose $T_\cap$ is o-minimal.
If $T_\cap$ is an open core of each $T_i$ then $T^{*}_\cup$ exists.
\end{thm}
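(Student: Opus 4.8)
The plan is to reduce this theorem to the general existence machinery the paper develops in Section~\ref{sec: Existence results} together with the specialization to tame topology in Section~\ref{sss-tame-topology}, using the characterization: if $T_1$ and $T_2$ both satisfy the pseudo-topological conditions over $T_\cap$, then $T^*_\cup$ exists. So the entire content of the proof is to verify that, when $T_\cap$ is o-minimal (with $\dim$ the canonical o-minimal dimension) and $T_\cap$ is an open core of $T_i$, the theory $T_i$ is approximable over $T_\cap$ and $T_i$ defines pseudo-denseness.

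First I would handle \emph{definability of pseudo-denseness}. The excerpt already asserts that when $T_\cap$ is o-minimal, \emph{any} theory extending $T_\cap$ defines pseudo-denseness, so this half is essentially free once we cite that fact. The underlying reason, which I would spell out, is that in an o-minimal structure, for a definable family $\{Y_t\}$ of subsets of a fixed $X_\cap$, the condition $\dim Y_t = \dim X_\cap$ is a definable condition on the parameter $t$ (o-minimal dimension is definable in families), and pseudo-denseness of a definable set $X_i$ in $X_\cap$ is the statement ``$X_i$ meets every $Y_t$ of full dimension,'' which is then first-order. Since this only uses definability of dimension in the reduct $T_\cap$, it transfers to any expansion, in particular to $T_i$.

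Next, the main work: \emph{approximability}. Here I would invoke the equivalence stated just before the theorem: over an o-minimal $T_\cap$, $T_i$ is approximable over $T_\cap$ if and only if $T_\cap$ is an open core of $T_i$. Granting that equivalence (proved in Section~\ref{sss-tame-topology}), the hypothesis that $T_\cap$ is an open core of $T_1$ and of $T_2$ gives approximability of both $T_1$ and $T_2$ over $T_\cap$ immediately. If I instead wanted to argue it directly: given an $\sM_i$-definable set $X_i$ inside an $\sM_i$-definable $X_\cap'$, I would take the topological closure $\overline{X_i}$ of $X_i$ (in the o-minimal topology coming from $\sM_\cap$); by the open core hypothesis $\overline{X_i}$ is $\sM_\cap$-definable, and a standard o-minimal fact --- a definable set is dense in its closure in the strong sense that it meets every full-dimensional definable subset of the closure, because the closure adds only a lower-dimensional boundary --- shows $X_i$ is pseudo-dense in $\overline{X_i}$, so $\overline{X_i}$ is the required pseudo-closure. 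I would also note the one subtlety that ``open core'' as defined in the excerpt is about closures of definable sets being definable, which is exactly what is needed; no uniformity beyond what open core provides is required, because approximability is a condition quantifying over definable sets one at a time.

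With both pseudo-topological conditions verified for $T_1$ and $T_2$ over the o-minimal $T_\cap$, the general existence theorem of Section~\ref{sec: Existence results} applies and yields that $T^*_\cup$ exists, completing the proof. The main obstacle --- or rather the only place where real content is invoked --- is the equivalence ``approximable over $T_\cap$ $\iff$ $T_\cap$ is an open core,'' which rests on the o-minimal structure theory (cell decomposition, definability and additivity of dimension, and the fact that the frontier $\overline{X}\setminus X$ of a definable set has strictly smaller dimension); everything else is bookkeeping and citation of the general framework. If one did not want to rely on the packaged equivalence, reproving the frontier-dimension inequality and the family-definability of dimension in the o-minimal reduct would be the technical heart.
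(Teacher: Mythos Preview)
Your overall structure is correct and matches the paper exactly: verify that each $T_i$ is approximable over $T_\cap$ and defines pseudo-denseness, then invoke Theorem~\ref{thm:approxinterp}. The approximability half is fine and mirrors Proposition~\ref{prop:topo-3}.

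However, the ``underlying reason'' you spell out for definability of pseudo-denseness does not work. You say pseudo-denseness of $X_i$ in $X_\cap$ is the statement ``$X_i$ meets every $Y_t$ of full dimension,'' and since dimension is definable in families this is first-order. But pseudo-denseness requires $X_i$ to meet \emph{every} $\sM_\cap$-definable subset of $X_\cap$ of full dimension, not just those in one fixed definable family; the quantification is over all formulas, not over parameters in a single family, so definability of dimension alone does not make this a first-order condition. The paper's actual mechanism (Proposition~\ref{prop:topo-2} and Proposition~\ref{prop:top-def-ps}) is different: using both the frontier and residue inequalities, it shows that $A$ is pseudo-dense in $X$ if and only if $A$ is \emph{topologically} dense in the essence $\core(X)$; since essences are uniformly definable and the topology has a definable basis, topological density is a first-order condition on the parameters. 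Since you ultimately cite the packaged result rather than relying on your sketch, the proof stands, but the heuristic you offer for it is not the right one.
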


\noindent 
In Section~\ref{section:aleph-base}, we show that if $T_\cap$ is $\aleph_0$-stable, and $\dim$ is Morley rank, then any theory extending $T_\cap$ has pseudo-closures in $T_\cap$.
The \textbf{induced dimension} of a definable set $X$ in a model of $T_i$ is the Morley rank of any pseudo-closure of $X$.
Assuming further that $T_\cap$ defines multiplicity, we show that $T_i$ defines pseudo-denseness if and only if $T_i$ uniformly defines induced dimension. 
Theorem~\ref{thm:3} applies to the example of algebraically closed fields with independent valuations.

\begin{thm}\label{thm:3}
Suppose $T_\cap$ is $\aleph_0$-stable and defines multiplicity.
If each $T_i$ defines induced dimension, then $T^{*}_\cup$ exists.
\end{thm}

\noindent In Section~\ref{sec:stable-categorical} we consider the case when $T_\cap$ is $\aleph_0$-stable, $\aleph_0$-categorical, and weakly eliminates imaginaries. We prove that $T_i$ defines pseudo-denseness if and only if $T_i$ eliminates $\exists^\infty$.
This applies to the examples of generic predicates, generic Skolemizations, and the random graph, hypergraph, and tournament.
It also generalizes Winkler's result on model companions of disjoint unions of theories~\cite{Winkler}.

\begin{thm}\label{thm:2}
Suppose $T_\cap$ is complete, $\aleph_0$-stable, and $\aleph_0$-categorical.
If $T_i^{\eq}$ eliminates $\exists^\infty$ for all $i$, then $T^*_\cup$ exists.
If $T_\cap$ weakly eliminates imaginaries and each $T_i$ eliminates $\exists^\infty$, then $T^{*}_\cup$ exists.
\end{thm}

\noindent In \cite[3.1.20]{Lou-thesis} van den Dries notes a similarity between his main result and Winkler's theorem and claims that this similarity \textit{``\ldots suggests a common generalization of Winkler's and my results''.}
We believe the present paper provides a moral answer to this suggestion but perhaps not the final answer, as
 our results do not in fact generalize the main result of \cite{Lou-thesis}.

\subsection{Conventions and notation} 
Throughout, $m$ and $n$ range over the natural numbers (containing $0$), and $k$ and $l$ range over the integers.
We work in multi-sorted first-order logic.
Our semantics allows empty sorts and empty structures. Our syntax includes logical constants $\top$ and $\bot$ interpreted as true and false, respectively. 
We view constant symbols as $0$-ary function symbols.

\medskip\noindent  Throughout, $L$ is a language with $S$ the set of sorts. Concepts like variables, functions, formulas, etc.\ are by default with respect to $L$.
Suppose $\sM$ is an 
$L$-structure. We use the corresponding capital letter $M$ to denote the $S$-indexed family $(M_s)_{s \in S}$ of underlying sets of the sorts of $\sM$.
By $A \subseteq M$, we mean $A =(A_s)_{s \in S}$ with $A_s \subseteq M_s$ for each $s \in S$.
If $A \subseteq M$, then a tuple of elements (possibly infinite) in $A$ is a tuple whose each component is in $A_s$ for some $s \in S$. If $x = (x_j)_{j\in J}$ is a tuple of variables (possibly infinite), we let $A^x = \prod_{j\in J} A_{s(x_j)} $ where $s(x_j)$ is the sort of the variable $x_j$.
If $\varphi(x,y)$ is an $L$-formula and $b \in M^y$, we let $\varphi(\sM, b)$ be the set defined in $\sM$ by the $L(b)$-formula $\varphi(x,b)$.
We call such $\varphi(\sM, b)$ a definable set in $\sM$ or an $\sM$-definable set. Hence, ``definable'' means ``definable, possibly with parameters''.
If we wish to exclude parameters, we write ``$\emptyset$-definable''. 

\medskip\noindent Whenever we consider multiple reducts of a structure, we decorate these reducts with the same decorations as their languages. For example, if $L_0\subseteq L_1$ are languages, we denote an $L_1$-structure by $\sM_1$, and we denote its reduct $\sM_1|_{L_0}$ to $L_0$ by $\sM_0$.
In this situation, we write ``in $\sM_0$'' to denote that we are evaluating some concept in the reduct.

\subsection*{Acknowledgement} 
We would like to thank Anand Pillay and Pierre Simon for pointing to us useful known results.
The referee's comments were particularly helpful in shaping the current form of the paper.

\section{Interpolative fusions and model companions} 

\subsection{Basic results}\label{sec:fusions}

\noindent This section clarifies the relationship between interpolative fusions and model companions of unions of theories. We make use of the definitions and notation set in the introduction.

\medskip \noindent The name ``interpolative fusion'' is inspired by a connection to the classical Craig interpolation theorem, which we state below; a proof is given, for example, in \cite[Theorem 6.6.3]{Hodges}.
It is well-known that in the context of first-order logic, the Craig interpolation theorem is equivalent to Robinson's joint consistency theorem. 

\begin{thm}\label{thm:CLR0}
Suppose $L_1$ and $L_2$ are first order languages with intersection $L_\cap$ and $\varphi_i$ is an $L_i$-sentence for $i \in \{1,2\}$.
If $\models (\varphi_1 \rightarrow \varphi_2)$ then there is an $L_\cap$-sentence $\psi$ such that $\models (\varphi_1 \rightarrow \psi)$ and $\models (\psi \rightarrow \varphi_2)$.
Equivalently: $\{ \varphi_1, \varphi_2 \}$ is inconsistent if and only if there is an $L_\cap$-sentence $\psi$ such that $\models (\varphi_1 \rightarrow \psi)$ and $\models (\varphi_2 \rightarrow \neg \psi)$.
\end{thm}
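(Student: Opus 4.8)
The plan is to prove the implication formulation; the ``equivalently'' clause then follows by pure propositional logic, since applying the implication form to $\varphi_1\rightarrow\neg\varphi_2$ (legitimate, as $\neg\varphi_2$ is again an $L_2$-sentence) gives the joint-consistency form, and conversely applying the joint-consistency form with $\varphi_2$ replaced by $\neg\varphi_2$ recovers the implication form. So assume $\models(\varphi_1\rightarrow\varphi_2)$ and let $\Sigma$ be the set of all $L_\cap$-sentences $\psi$ with $\models(\varphi_1\rightarrow\psi)$. If $\Sigma\cup\{\neg\varphi_2\}$ is inconsistent, then by compactness together with the fact that $\Sigma$ is closed under finite conjunction there is a single $\psi\in\Sigma$ with $\models(\psi\rightarrow\varphi_2)$, and this $\psi$ is the required interpolant. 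It therefore suffices to derive a contradiction from the assumption that $\Sigma\cup\{\neg\varphi_2\}$ is consistent.

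Fix an $L_2$-structure $\sB\models\Sigma\cup\{\neg\varphi_2\}$ and set $\Sigma'=\Th_{L_\cap}(\sB)$, a complete $L_\cap$-theory containing $\Sigma$. I would first check that $\Sigma'\cup\{\varphi_1\}$ is consistent: otherwise compactness yields finitely many sentences of $\Sigma'$ with conjunction $\chi$ such that $\models(\varphi_1\rightarrow\neg\chi)$, hence $\neg\chi\in\Sigma\subseteq\Sigma'$, contradicting $\sB\models\chi$. Choose an $L_1$-structure $\sA\models\Sigma'\cup\{\varphi_1\}$. Since $\Sigma'$ is complete, $\Th_{L_\cap}(\sA)=\Sigma'=\Th_{L_\cap}(\sB)$, i.e.\ the $L_\cap$-reducts of $\sA$ and $\sB$ are elementarily equivalent.

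Now comes the one substantive step: amalgamating these two $L_\cap$-structures. Because $\sA|_{L_\cap}\equiv\sB|_{L_\cap}$, the union $\ediag_{L_\cap}(\sA)\cup\ediag_{L_\cap}(\sB)$, formed in disjoint sets of fresh constants, is consistent: a finite fragment asserts $\sA\models\theta(\bar a)$ and $\sB\models\rho(\bar b)$, and since $\sA\equiv\sB$ one realizes $\theta$ inside $\sB$ and interprets the constants accordingly. A model of this union provides $L_\cap$-elementary embeddings of both $\sA|_{L_\cap}$ and $\sB|_{L_\cap}$ into a common structure; composing along them produces an $L_1$-elementary extension $\sA'$ of $\sA$, an $L_2$-elementary extension $\sB'$ of $\sB$, and an isomorphism $\iota\colon\sA'|_{L_\cap}\to\sB'|_{L_\cap}$ of $L_\cap$-structures. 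Transporting the $L_2$-structure of $\sB'$ onto the universe of $\sA'$ along $\iota$ is compatible with the ambient $L_1$-structure of $\sA'$ precisely because $\iota$ preserves exactly the symbols of $L_\cap=L_1\cap L_2$, and so yields an $L_\cup$-structure $\sC$ with $\sC|_{L_1}=\sA'\models\varphi_1$ and $\sC|_{L_2}\cong\sB'\models\neg\varphi_2$. Then $\sC\models\varphi_1\wedge\neg\varphi_2$, contradicting $\models(\varphi_1\rightarrow\varphi_2)$, and the proof is complete.

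I expect the amalgamation in the last paragraph to be the only point requiring care: one cannot glue $\sA|_{L_\cap}$ and $\sB|_{L_\cap}$ directly, as elementary equivalence is weaker than isomorphism, so one must first pass to elementary extensions with isomorphic $L_\cap$-reducts, and one must verify that the transported $L_2$-structure agrees with the $L_1$-structure on the shared symbols. The diagram-and-compactness argument above handles both; the classical alternative builds the common $L_\cap$-elementary extension as the union of an interleaved pair of elementary chains, which is the same idea with more bookkeeping. All of this is of course classical; see~\cite[Theorem 6.6.3]{Hodges}.
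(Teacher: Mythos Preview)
The paper does not prove this theorem; it is recalled as the classical Craig interpolation theorem with a reference to \cite[Theorem~6.6.3]{Hodges}. Your overall strategy --- reduce to showing that $\sA|_{L_\cap}\equiv\sB|_{L_\cap}$ forces the existence of an $L_\cup$-structure modelling both $\varphi_1$ and $\neg\varphi_2$ --- is the standard one, and the preliminary steps (defining $\Sigma$, extracting $\sB$, showing $\Sigma'\cup\{\varphi_1\}$ is consistent) are correct.

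The gap is in the amalgamation. A model of $\ediag_{L_\cap}(\sA)\cup\ediag_{L_\cap}(\sB)$ is only an $L_\cap$-structure $\sC$ equipped with $L_\cap$-elementary embeddings $\sA|_{L_\cap}\to\sC\leftarrow\sB|_{L_\cap}$; there is nothing to ``compose along'' and no way to extend the $L_1$-structure of $\sA$ or the $L_2$-structure of $\sB$ over $\sC$. So this single step does not yield $\sA\elesub_{L_1}\sA'$ and $\sB\elesub_{L_2}\sB'$ with $\sA'|_{L_\cap}\cong\sB'|_{L_\cap}$. What you call the ``classical alternative'' --- the interleaved elementary chain construction --- is not more bookkeeping for the same idea but the actual content of this step: one alternately passes to an $L_1$-elementary extension of $\sA_n$ whose $L_\cap$-reduct embeds $\sB_n|_{L_\cap}$ elementarily, then to an $L_2$-elementary extension of $\sB_n$ absorbing $\sA_{n+1}|_{L_\cap}$, and the limit carries compatible $L_1$- and $L_2$-structures. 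There is no genuine shortcut, since the consistency of $\ediag_{L_1}(\sA)\cup\ediag_{L_2}(\sB)$ in one stroke is essentially equivalent to the statement being proved.
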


\noindent Our first result is an easy generalization of Theorem~\ref{thm:CLR0}, applicable to our setting.

\begin{cor} \label{cor:interpolation}
For each $i\in I$, let $\Sigma_i(x)$ be a set of $L_i$-formulas. If $\bigcup_{i\in I} \Sigma_i(x)$ is inconsistent, then there is a finite subset $J\subseteq I$ and an $L_\cap$-formula $\varphi^i(x)$ for each $i\in J$ such that:
\[  \Sigma_i(x) \models \varphi^i(x) \text{ for all } i \in J, \text{and } \{ \varphi^i(x)\mid i\in J \} \text{  is inconsistent}. \]
\end{cor}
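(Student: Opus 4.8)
The plan is to reduce the statement to the two-language Craig interpolation theorem (Theorem~\ref{thm:CLR0}) via two successive compactness arguments. First I would introduce new constant symbols $c = (c_j)_{j \in J'}$ for the variables $x = (x_j)_{j \in J'}$, adding them to the common language $L_\cap$ (and hence to every $L_i$ and to $L_\cup$); this is harmless since adding constants does not affect consistency of a set of formulas in those variables. After this substitution each $\Sigma_i(x)$ becomes a set of $L_i$-sentences $\Sigma_i(c)$, and the hypothesis says $\bigcup_{i \in I} \Sigma_i(c)$ is inconsistent.

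The second step is a compactness/finiteness reduction. Since inconsistency is witnessed by finitely many formulas, there is a finite $J \subseteq I$ and, for each $i \in J$, a finite subset $\Sigma_i^0(c) \subseteq \Sigma_i(c)$ such that $\bigcup_{i \in J} \Sigma_i^0(c)$ is already inconsistent. Let $\sigma_i$ be the conjunction of the sentences in $\Sigma_i^0(c)$, an $L_i$-sentence (using the constants $c$). So $\{\sigma_i \mid i \in J\}$ is inconsistent, i.e., $\models \neg \bigwedge_{i \in J} \sigma_i$.

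The third step is to iterate binary Craig interpolation across the finite family. I would proceed by induction on $|J|$. Pick any $i_0 \in J$ and split the language as $L_{i_0}$ versus $L' := \bigcup_{i \in J \setminus \{i_0\}} L_i$. Note that $L_{i_0} \cap L' = L_\cap$: indeed any symbol in $L_{i_0}$ that also lies in some $L_i$ with $i \neq i_0$ must lie in $L_{i_0} \cap L_i = L_\cap$ by hypothesis (this uses the standing assumption $L_i \cap L_j = L_\cap$ for distinct $i,j$, which is exactly what makes the argument go through; the constants $c$ are in $L_\cap$ by construction). Since $\sigma_{i_0}$ is an $L_{i_0}$-sentence, $\bigwedge_{i \in J \setminus \{i_0\}} \sigma_i$ is an $L'$-sentence, and $\models \sigma_{i_0} \to \neg \bigwedge_{i \in J \setminus \{i_0\}} \sigma_i$, Theorem~\ref{thm:CLR0} gives an $L_\cap$-sentence $\psi^{i_0}$ with $\models \sigma_{i_0} \to \psi^{i_0}$ and $\models (\bigwedge_{i \in J\setminus\{i_0\}} \sigma_i) \to \neg \psi^{i_0}$. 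The latter means $\{\neg\psi^{i_0}\} \cup \{\sigma_i \mid i \in J \setminus \{i_0\}\}$ is inconsistent; regarding $\neg \psi^{i_0}$ as adjoined to, say, some fixed remaining index, we get a smaller inconsistent family and apply the inductive hypothesis to obtain $L_\cap$-sentences $\psi^i$ for $i \in J \setminus \{i_0\}$ with $\models \sigma_i \to \psi^i$ (absorbing $\neg\psi^{i_0}$ appropriately) and $\{\psi^i \mid i \in J \setminus \{i_0\}\} \cup \{\neg \psi^{i_0}\}$ inconsistent. Combining, the family $\{\psi^i \mid i \in J\}$ is inconsistent, and $\models \sigma_i \to \psi^i$, i.e. $\Sigma_i(c) \models \psi^i$, for each $i \in J$. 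Finally, replacing the constants $c$ back by the variables $x$ turns each $\psi^i$ into an $L_\cap$-formula $\varphi^i(x)$ with $\Sigma_i(x) \models \varphi^i(x)$ and $\{\varphi^i(x) \mid i \in J\}$ inconsistent, as required.

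I expect the only genuinely delicate point to be the bookkeeping in the induction: one must be careful that when the interpolant $\neg\psi^{i_0}$ is folded into the remaining family it is an $L_\cap$-formula (hence an $L_i$-formula for every remaining $i$), so the inductive hypothesis applies with the same common language $L_\cap$, and that the resulting $\varphi^i$ are the conjunctions one expects. None of this is hard, but it is the step where the hypothesis $L_i \cap L_j = L_\cap$ is essential and where a sloppy argument could go wrong. Everything else — the introduction of constants and the compactness reduction to finitely many sentences — is routine.
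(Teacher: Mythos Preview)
Your overall strategy---replace variables by fresh constants, use compactness to reduce to a finite family of single sentences, then induct on the size of that family via the two-language Craig interpolation theorem---is exactly the paper's approach. However, the inductive step as you wrote it contains a sign error that breaks the argument.

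From $\models \bigl(\bigwedge_{i \in J\setminus\{i_0\}} \sigma_i\bigr) \to \neg \psi^{i_0}$ you conclude that $\{\neg\psi^{i_0}\} \cup \{\sigma_i \mid i \neq i_0\}$ is inconsistent. This is backwards: the implication is equivalent to the inconsistency of $\{\psi^{i_0}\} \cup \{\sigma_i \mid i \neq i_0\}$, not $\{\neg\psi^{i_0}\} \cup \{\sigma_i \mid i \neq i_0\}$. The error propagates: even if one grants your claimed inductive output that $\{\psi^i \mid i \neq i_0\} \cup \{\neg\psi^{i_0}\}$ is inconsistent, this does not yield that $\{\psi^i \mid i \in J\} = \{\psi^i \mid i \neq i_0\} \cup \{\psi^{i_0}\}$ is inconsistent, which is what you need.

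The paper's fix (translated to your notation) is to conjoin $\psi^{i_0}$ to \emph{each} remaining $\sigma_i$, obtaining the inconsistent family $\{\sigma_i \wedge \psi^{i_0} \mid i \neq i_0\}$ of size $|J|-1$. Induction yields $L_\cap$-sentences $\theta^i$ with $\sigma_i \wedge \psi^{i_0} \models \theta^i$ and $\{\theta^i \mid i \neq i_0\}$ inconsistent. Then set $\varphi^{i_0} = \psi^{i_0}$ and $\varphi^i = \neg\psi^{i_0} \vee \theta^i$ for $i \neq i_0$: one has $\sigma_i \models \varphi^i$ for all $i$, and $\bigwedge_{i \in J}\varphi^i$ is equivalent to $\psi^{i_0} \wedge \bigwedge_{i \neq i_0}\theta^i$, which is inconsistent. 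Your variant of adjoining to a single fixed index can also be made to work, but it needs exactly this disjunction trick at that index; the parenthetical ``absorbing $\neg\psi^{i_0}$ appropriately'' hides both the sign correction and this step, and as written the induction does not deliver what you claim.
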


\begin{proof}
By introducing a new constant symbol for each free variable, we reduce to the case when $x$ is the empty tuple of variables.
We may also assume that the sets $\Sigma_i$ are closed under conjunction. 
By compactness, if $\bigcup_{i\in I} \Sigma_i$ is inconsistent, then there is a nonempty finite subset $J\subseteq I$ and a formula $\varphi_i\in \Sigma_i$ for all $i\in J$ such that $\{\varphi_i\mid i\in J\}$ is inconsistent.

We argue by induction on the size of $J$. For the sake of notational simplicity, we suppose $J = \{1,\ldots,n\}$.
If $n = 1$, then $\{\varphi_1\}$ is inconsistent, and we choose $\varphi^1$ to be the contradictory $L_\cap$-formula $\bot$. Suppose $n \geq 2$.
Then $(\varphi_1 \land \ldots \land \varphi_{n-1})$ is an $(L_1 \cup \ldots \cup L_{n-1})$-sentence and the set
\[\{ (\varphi_1 \land \ldots \land \varphi_{n-1}), \varphi_n\} \text{ is inconsistent}. \]
Applying Theorem \ref{thm:CLR0}, we get a sentence $\psi$ in $L_n \cap (L_1 \cup \ldots \cup L_{n-1}) = L_\cap$  such that
\[ \models (\varphi_1 \land \ldots \land \varphi_{n-1}) \rightarrow \psi \quad \text{and}  \quad \models \varphi_{n} \rightarrow \neg \psi.  \]
Then $\varphi_i\wedge \neg \psi$ is an $L_i$-sentence for all $1 \leq i \leq n-1$, and $\{\varphi_i  \wedge \neg \psi\mid 1\leq i\leq n-1\}$ is inconsistent.
Applying induction, we choose for each $1 \leq i \leq n-1$ an $L_\cap$-sentence $\theta^i$ such that 
\[ \models (\varphi_i  \wedge \neg \psi)  \rightarrow \theta^i \text{ for all } 1 \leq i \leq n-1, \text{and } \models \neg (\theta^1 \land \ldots \land \theta^{n-1}).  \]
Finally, set $\varphi^i$ to be $(\psi \vee \theta^i)$ for $1 \leq i \leq n-1$, and set $\varphi^{n} $ to be $\neg \psi$. 
It is easy to check that all the desired conditions are satisfied.
\end{proof}

\noindent Corollary~\ref{cor:consistency} follows immediately from Corollary~\ref{cor:interpolation} and generalizes Robinson's joint consistency theorem.

\begin{cor}\label{cor:consistency}
Let $p(x)$ 
be a complete $L_\cap$-type, and for all $i\in I$, let $p_i(x)$ be a complete $L_i$-type such that $p(x)\subseteq p_i(x)$. Then $\bigcup_{i\in I} p_i(x)$ is consistent.
\end{cor}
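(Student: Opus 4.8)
The statement to prove is Corollary~\ref{cor:consistency}: given a complete $L_\cap$-type $p(x)$ and, for each $i \in I$, a complete $L_i$-type $p_i(x)$ extending $p(x)$, the union $\bigcup_{i\in I} p_i(x)$ is consistent. The natural strategy is to argue by contraposition using Corollary~\ref{cor:interpolation}, which is the finitary interpolation statement already available to us. Suppose $\bigcup_{i\in I} p_i(x)$ is inconsistent. Then Corollary~\ref{cor:interpolation} provides a finite subset $J \subseteq I$ and, for each $i \in J$, an $L_\cap$-formula $\varphi^i(x)$ such that $p_i(x) \models \varphi^i(x)$ for all $i\in J$ and $\{\varphi^i(x) \mid i \in J\}$ is inconsistent.

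\textbf{Key steps.} The point is now to push each $\varphi^i(x)$ down into the common type $p(x)$. Fix $i \in J$. Since $\varphi^i(x)$ is an $L_\cap$-formula and $p(x)$ is a \emph{complete} $L_\cap$-type, either $\varphi^i(x) \in p(x)$ or $\lnot\varphi^i(x) \in p(x)$. I claim the latter is impossible: if $\lnot\varphi^i(x) \in p(x)$, then since $p(x) \subseteq p_i(x)$ we would have $\lnot\varphi^i(x) \in p_i(x)$; but also $p_i(x) \models \varphi^i(x)$, and $p_i(x)$ is consistent (being a complete type), contradiction. Hence $\varphi^i(x) \in p(x)$ for every $i\in J$. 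Since $J$ is finite and $p(x)$ is consistent, $\{\varphi^i(x) \mid i\in J\} \subseteq p(x)$ is consistent, contradicting the conclusion of Corollary~\ref{cor:interpolation}. Therefore $\bigcup_{i\in I} p_i(x)$ is consistent, as desired.

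\textbf{Main obstacle.} There is essentially no serious obstacle here; the proof is a short deduction from Corollary~\ref{cor:interpolation}. The only point requiring a moment's care is the verification that completeness of $p(x)$ as an $L_\cap$-type (rather than merely $p(x)$ being a set of $L_\cap$-formulas) is exactly what lets one absorb each interpolant $\varphi^i(x)$ into $p(x)$; this is where the hypothesis that the $p_i$ all extend the \emph{same} complete type $p$ is used. One should also note that a set of formulas over the (empty, or fixed) set of parameters is consistent iff every finite subset is, so no compactness beyond what is already packaged into Corollary~\ref{cor:interpolation} is needed. I would write this up in a few sentences.
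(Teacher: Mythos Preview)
Your proof is correct and is precisely the immediate deduction from Corollary~\ref{cor:interpolation} that the paper has in mind; the paper does not spell out the argument but simply states that the result ``follows immediately from Corollary~\ref{cor:interpolation}.'' Your write-up is exactly how one would fill in that omitted step.
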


\noindent Corollary~\ref{cor:interpolation} also allows us to show that families of definable sets that are not separated have ``potentially'' non-empty intersection.

\begin{lem}\label{lem:sem-craig}
Let $\sM_\cup$ be an $L_\cup$-structure, and suppose $J\subseteq I$ is finite and $X_i\subseteq M^x$ is $\sM_i$-definable for all $i\in J$.  The family $(X_i)_{i \in J}$ is separated if and only if for every $L_\cup$-structure $\sN_\cup$ such that $\sM_i \elesub \sN_i$ for all $i \in I$, $\bigcap_{i \in J} X_i(\sN_\cup) = \emptyset$.
\end{lem}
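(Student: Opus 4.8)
The plan is to prove the two directions separately, with the interesting content going into the "not separated implies potentially nonempty" direction.

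First, the easy direction: suppose $(X_i)_{i\in J}$ is separated, witnessed by $\sM_\cap$-definable sets $X^i$ with $X_i\subseteq X^i$ and $\bigcap_{i\in J}X^i=\emptyset$. Write $X_i = \varphi_i(\sM_i, b_i)$ for $L_i$-formulas $\varphi_i$ and parameters $b_i\in M$, and $X^i = \psi^i(\sM_\cap, c^i)$ for $L_\cap$-formulas $\psi^i$ and parameters $c^i\in M$. The inclusion $X_i\subseteq X^i$ and the emptiness of $\bigcap_{i\in J}X^i$ are expressed by $L_i(b_i c^i)$-sentences and an $L_\cap(\bigcup_i c^i)$-sentence, respectively, all of which are true in $\sM_\cup$ and hence, being preserved under elementary extension of each reduct, remain true in any $\sN_\cup$ with $\sM_i\elesub\sN_i$ for all $i\in I$. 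In particular $X_i(\sN_\cup)\subseteq X^i(\sN_\cup)$ and $\bigcap_{i\in J}X^i(\sN_\cup)=\emptyset$, so $\bigcap_{i\in J}X_i(\sN_\cup)=\emptyset$.

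For the converse, I will prove the contrapositive: if $(X_i)_{i\in J}$ is not separated, then there is an $L_\cup$-structure $\sN_\cup$ with $\sM_i\elesub\sN_i$ for all $i\in I$ and $\bigcap_{i\in J}X_i(\sN_\cup)\neq\emptyset$. The standard move is a Robinson-style joint consistency/amalgamation argument. Introduce a fresh tuple of constants $d$ (of the appropriate sorts matching $x$), and for each $i\in I$ let $\Sigma_i$ be the set of $L_i(M)$-sentences consisting of the elementary diagram $\ediag(\sM_i)$ of $\sM_i$ together with, for $i\in J$, the sentence asserting $d\in X_i$ (i.e.\ $\varphi_i(d, b_i)$ where $X_i=\varphi_i(\sM_i,b_i)$); for $i\notin J$ we just take $\Sigma_i = \ediag(\sM_i)$. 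I claim $\bigcup_{i\in I}\Sigma_i$ is consistent. If not, then by Corollary~\ref{cor:interpolation} (applied over the parameters, with $d$ among the free-variable constants) there is a finite $J'\subseteq I$ and an $L_\cap(M)$-sentence $\varphi^i$ for each $i\in J'$ with $\Sigma_i\models\varphi^i$ and $\{\varphi^i\mid i\in J'\}$ inconsistent. We may enlarge $J'$ to contain $J$ (adding tautologies where needed). Now each $\varphi^i$ is an $L_\cap(M)$-sentence; pulling out the constant $d$ and any auxiliary parameters, we can view $\varphi^i$ as $\theta^i(d)$ for an $L_\cap$-formula with parameters in $M$. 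For $i\notin J$, $\ediag(\sM_i)\models\theta^i(d)$; since $\ediag(\sM_i)$ proves nothing about $d$ beyond logical validities (here I should be slightly careful — $d$ does not occur in $\ediag(\sM_i)$, so $\ediag(\sM_i)\models\theta^i(d)$ means $\theta^i(x)$ is actually $\sM_\cap$-valid, i.e.\ defines all of $M^x$ in $\sM_\cap$), so WLOG for $i\notin J$ the set $\theta^i(\sM_\cap) = M^x$ and these contribute nothing. For $i\in J$, $\ediag(\sM_i)\cup\{\varphi_i(d,b_i)\}\models\theta^i(d)$, which unwinds (again because $d$ is a fresh constant, so the entailment holds uniformly) to $\sM_i\models \forall x\,(\varphi_i(x,b_i)\to\theta^i(x))$, i.e.\ $X_i\subseteq \theta^i(\sM_\cap)=:X^i$, an $\sM_\cap$-definable set. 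Finally the inconsistency of $\{\varphi^i\mid i\in J'\}$ together with the fact that the $i\notin J$ ones are trivial gives $\bigcap_{i\in J}X^i=\emptyset$, so $(X_i)_{i\in J}$ is separated — a contradiction.

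Hence $\bigcup_{i\in I}\Sigma_i$ is consistent; let $\sN_\cup$ be a model, with $d^{\sN_\cup}$ the interpretation of $d$. Since $\sN_i\models\ediag(\sM_i)$ we have $\sM_i\elesub\sN_i$ for each $i\in I$ (after identifying $\sM_i$ with its image), and since $\sN_i\models\varphi_i(d, b_i)$ for $i\in J$ we get $d^{\sN_\cup}\in X_i(\sN_\cup)$ for all $i\in J$, so $\bigcap_{i\in J}X_i(\sN_\cup)\neq\emptyset$, as required.

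The main obstacle is the bookkeeping in the inconsistency argument: one must be careful that the fresh constant $d$ really does decouple the parameter-language of $\sM$ from the Craig-interpolation sorts, so that an $L_\cap(M)$-consequence $\theta^i(d)$ of $\ediag(\sM_i)\cup\{\varphi_i(d,b_i)\}$ genuinely yields the uniform containment $X_i\subseteq\theta^i(\sM_\cap)$ as an $\sM_\cap$-definable set — and that the parameters appearing in the $\theta^i$ all lie in $M$ so that $\theta^i(\sM_\cap)$ makes sense as an $\sM_\cap$-definable subset of $M^x$. Everything else is routine: the easy direction is preservation under elementary extensions, and the consistency conclusion is a direct application of Corollary~\ref{cor:interpolation}.
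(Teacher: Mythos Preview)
Your proof is correct and follows essentially the same approach as the paper's: both directions match, and the hard direction rests on the same application of Corollary~\ref{cor:interpolation} to the union of the elementary diagrams with the defining formulas. The only organizational difference is that the paper first applies compactness explicitly to extract finitely many sentences $\psi_i(b,c)$ from the $\ediag(\sM_i)$ and then invokes interpolation on the parameter-free formulas $\varphi'_i(x,y,z)=\varphi_i(x,y)\wedge\psi_i(y,z)$, whereas you apply Corollary~\ref{cor:interpolation} directly in the expanded languages $L_i(M)$ and let its built-in compactness do that work; this is a cosmetic difference, not a different route.
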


\begin{proof}
Suppose $(X_i)_{i \in J}$ is separated.
Then there are $\sM_\cap$-definable $X^1,\ldots,X^n$ such that $X_i \subseteq X^i$ for all $i \in J$ and $\bigcap_{i \in J} X^n = \emptyset$.
Suppose $\sN_\cup$ is a $T_\cup$-model satisfying $\sM_i \elesub \sN_i$ for all $i \in I$.
Then $X_i(\sN_\cup) \subseteq X^i(\sN_\cup)$ for all $i \in J$ and $\bigcap_{i \in J} X^i(\sN_\cup) = \emptyset$, so also $\bigcap_{i\in J} X_i(\sN_\cup) = \emptyset$.

Conversely, suppose that $\bigcap_{i \in J} X_i(\sN_\cup) = \emptyset$ for every $L_\cup$-structure $\sN_\cup$ such that $\sM_i \elesub \sN_i$ for all $i \in I$. For each $i\in J$, let $\varphi_i(x,b)$ be an $L_i(M)$-formula defining $X_i$. Then the partial type \[\bigcup_{i\in I} \ediag(\sM_i)\cup \bigcup_{i\in J} \varphi_i(x,b) \quad  \text{ is inconsistent.}\]
By compactness, there is a finite subset $J'\subseteq I$ with $J \subseteq J'$, a finite tuple $c\in M^y$ and a formula $\psi_i(b,c)\in \ediag(\sM_i)$ for each $i\in J'$ such that \[\{\psi_i(b,c)\mid i\in J'\}\cup \{\varphi_i(x,b)\mid i\in J\}\quad  \text{ is inconsistent. }\]
Let $\varphi_i$ be the true formula $\top$ when $i\in J'\setminus J$, and define $\varphi'_i(x,y,z) = \varphi_i(x,y)\land \psi_i(y,z)$ for all $i\in J'$. Note that since $\sM_i\models \psi_i(b,c)$, 
\[ \varphi_i(\sM_\cup,b)  =  \varphi'_i(\sM_\cup,b,c).  \]
Applying Corollary~\ref{cor:interpolation}, we obtain an inconsistent family $\{\theta_i(x,y,z)\mid i\in J'\}$ of $L_\cap$-formulas such that $\models \varphi'_i(x,y,z)\rightarrow \theta_i(x,y,z)$ for each $i\in J'$.
It follows that $$\varphi_i(\sM_\cup,b,c)\subseteq \theta_i(\sM_\cup,b,c) \text{ for all }i\in J',  \text{ and } \bigcap_{i\in J'} \theta_i(\sM_\cup,b,c) = \emptyset.$$
But since $\varphi_i(\sM_\cup,b,c) = M^x$ when $i\in J'\setminus J$, also $\theta_i(\sM_\cup,b,c) = M^x$ when $i\in J'\setminus J$. So $\bigcap_{i\in J}\theta_i(\sM_\cup,b,c) = \emptyset$ and $(\theta_i(\sM_\cup,b,c))_{i\in J}$ separates $(X_i)_{i\in J}$. 
\end{proof}

\begin{rem} \label{rem: robust}
If we change languages in a way that does not change the class of definable sets (with parameters), then the class of interpolative $L_\cup$-structures is not affected. 
In particular:
\begin{enumerate}
    \item An interpolative structure $\sM_\cup$ remains so after adding new constant symbols naming elements of $M$ to each of the languages $L_\square$ for $\square\in I\cup \{\cup,\cap\}$. 
    \item Suppose $L^{\diamondsuit}_\square$ is an expansion by definitions of $L_\square$ for $\square \in I\cup \{ \cap\}$, $L^{\diamondsuit}_i \cap L^{\diamondsuit}_j = L^{\diamondsuit}_\cap$ for distinct $i$ and $j$ in $I$, and $L^{\diamondsuit}_\cup = \bigcup_{i\in I} L^\diamondsuit_i$ is the resulting expansion by definitions of $L_\cup$. Then any $L_\cup$-structure $\sM_\cup$ has a canonical expansion $\sM_\cup^\diamondsuit$ to an $L_\cup^\diamondsuit$-structure. And  $\sM_\cup$ is an interpolative $L_\cup$-structure if and only if $\sM_\cup^{\diamondsuit}$ is an interpolative $L^{\diamondsuit}_\cup$-structure.
    \item An interpolative $\sM_\cup$-structure remains so after replacing each function symbol $f$ in each of the languages $L_\square$ for $\square\in I\cup \{\cup,\cap\}$ by a relation symbol $R_f$, interpreted as the graph of the interpretation of $f$ in $\sM_\cup$.
    \item Suppose $\sM_\cup$ is an $L_\cup$-structure.
    Moving to $\sM_\cap^\eq$ involves the introduction of new sorts and function symbols for quotients by $L_\cap$-definable equivalence relations on $M$.
    For all $\square\in I\cup \{\cup,\cap\}$, let $L_\square^{\cap-\eq}$ be the language expanding $L_\square$ produced by adding new symbols for $L_\square$-definable equivalence relations, and let $\sM_\square^{\cap-\eq}$ be the natural expansion of $\sM_\square$ to $L_\square^{\cap-\eq}$.
    Then $\sM_\cup$ is interpolative if and only if $\sM_\cup^{\cap-\eq}$ is interpolative. This follows from the fact that if $X_\square$ is an $\sM_\square^{\cap-\eq}$-definable set in one of the new sorts, corresponding to the quotient of $M^x$ by an $L_\cap$-definable equivalence relation, then the preimage of $X_\square$ under the quotient is $\sM_\square$-definable.
\end{enumerate} 
\end{rem}

\noindent We now show that interpolative models of $T_\cup$ can be thought of as  ``relatively existentially closed'' models of  $T_\cup$, and the interpolative fusion $T_\cup^*$ can be thought of as the ``relative model companion'' of $T_\cup$.

\medskip \noindent 
Recall that a theory $T$ is {\bf inductive} if the class of models of $T$ is closed under directed unions.
Equivalently, $T$ admits an axiomatization by $\forall\exists$-sentences. 

\begin{fact}[\cite{Hodges} Theorem 8.3.6]\label{fact:inductive}
Suppose $T$ is inductive.
Then $T$ has a model companion if and only if the class of existentially closed $T$-models is elementary. If these equivalent conditions hold, then the model companion of $T$ is the theory of existentially closed $T$-models.
\end{fact}

\begin{thm} \label{thm:relativemc}
Suppose $\sM_\cup\models T_\cup$. 
\begin{enumerate}
    \item $\sM_\cup$ is an interpolative structure if and only if for all $\sN_\cup$ such that $\sM_i\elesub \sN_i$ for all $i\in I$, 
$$ \sN_\cup \models \exists x\, \varphi_\cup(x) \quad \text{implies}  \quad \sM_\cup \models \exists x\, \varphi_\cup(x)$$ whenever $\varphi_\cup(x)$ is a Boolean combination of $L_i$-formulas with parameters from $M$.
\item If each $L_i$ is relational and each $T_i$ is model-complete, then the interpolative models of $T_\cup$ are exactly the existentially closed models, and the interpolative fusion of $T_\cup$ is precisely the model companion of $T_\cup$, if either of these exists.
\item There exists an interpolative structure $\sN_\cup$ such that $\sM_\cup\subseteq \sN_\cup$, and $\sM_i\elesub \sN_i$ for all $i\in I$. 
\item If $T^*_\cup$ exists, $\sM_\cup \models T^*_\cup$, $\sN_\cup \models T^*_\cup$, and $\sM_i\elesub \sN_i$ for all $i\in I$, then $\sM \elesub \sN$.
\end{enumerate}
\end{thm}
\begin{proof}
Part (1) follows immediately from  Lemma~\ref{lem:sem-craig} and the definition of interpolative structure. 

For part (2), since each $T_i$ is model-complete, whenever $\sM_\cup\subseteq \sN_\cup$ are both models of $T_\cup$, we have $$\sM_i\elesub \sN_i \quad \text{for all } i\in I.$$ 
As $L_\cup$ is relational, no atomic formula contains symbols from distinct languages, and hence every quantifier-free $L_\cup$-formula is a Boolean combination of $L_i$-formulas. Therefore, it follows from (1) that $\sM_\cup$ is interpolative if and only if it is existentially closed in the class of $T_\cup$-models. 
Each $T_i$ is model-complete and hence inductive, so $T_\cup$ is also inductive. By Fact~\ref{fact:inductive} that $T^*_\cup$ is the model companion of $T_\cup$, if either either of these exists.

For parts (3) and (4), applying Remark~\ref{rem: robust}, we can assume by Morleyizing that each $T_i$ admits quantifier elimination and each $L_i$ is relational.

Now (3) follows from the well-known fact that every model of an inductive theory embeds in an existentially closed model~\cite[Theorem 8.2.1]{Hodges}. In particular, for all $\sM_\cup\models T_\cup$, there exists $\sN_\cup$ such that $\sM_\cup\subseteq \sN_\cup$ and $\sN_\cup\models T_\cup$ is existentially closed. Then $\sN_\cup$ is interpolative by (2), and $\sM_i\preceq \sN_i$ for all $i$ by quantifier elimination.

For (4), if $T_\cup^*$ exists, then $T_\cup^*$ is model-complete by (2). Since $\sM_i$ is an $L_i$-substructure of $\sN_i$ for all $i\in I$, $\sM_\cup$ is an $L_\cup$-substructure of $\sN_\cup$, so $\sM_\cup\preceq \sN_\cup$. 
\end{proof}

\noindent In the rest of this section, we will do a bit more work to improve Theorem~\ref{thm:relativemc}(2) by removing the hypothesis that the languages $L_i$ are relational.

\medskip \noindent  A formula is \textbf{atomic flat} if it is of the form $x_1 = x_2$,  $R(x_1,\ldots,x_n)$, or $f(x_1,\ldots,x_n) = x_{n+1}$, where $R$ is an $n$-ary relation symbol and $f$ is an $n$-ary function symbol. Here  $x_1,\dots,x_{n+1}$ are arbitrary variables, which need not be distinct.  A \textbf{flat literal} is an atomic flat formula or the negation of an atomic flat formula.  
A \textbf{flat formula} is a conjunction of finitely many flat literals.
An \textbf{E$\flat$-formula} is a formula of the form $\exists y\, \varphi(x,y)$, where $\varphi(x,y)$ is flat and $\models \forall x\, \exists^{\leq 1} y\, \varphi(x,y)$. Here $x$ and $y$ may be tuples of variables.

\begin{rem}\label{rem:flat-closure}
The class of E$\flat$-formulas is closed (up to equivalence) under finite conjunction: the conjunction of the E$\flat$-formulas $\exists y_1\, \varphi_1(x,y_2)$ and $\exists y_2\, \varphi_2(x,y_2)$ is equivalent to the E${\flat}$-formula \[\exists y_1y_2\, (\varphi_1(x,y_1)\land \varphi_2(x,y_2)).\]
\end{rem}

\noindent Lemma~\ref{lem:eflat} is essentially \cite[Thm 2.6.1]{Hodges}. 
Hodges uses ``unnested'' for ``flat''.

\begin{lem} \label{lem:eflat}
Every literal (atomic or negated atomic formula) is logically equivalent to an E$\flat$-formula.
\end{lem}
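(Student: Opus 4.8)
The plan is to prove this by induction on the term complexity appearing in the literal, reducing every occurrence of a compound (nested) term by introducing an existentially quantified variable naming its value, and checking at each stage that uniqueness (the $\exists^{\leq 1}$ condition) and the closure properties from Remark~\ref{rem:flat-closure} are maintained. The key observation is that an atomic formula that is not flat has the shape $R(t_1,\dots,t_n)$, $t = s$, or $f(t_1,\dots,t_n) = y$ where at least one $t_i$ (or $t$, $s$) is a compound term; and a compound term $t$ has an outermost function symbol $g$ applied to subterms, i.e.\ $t = g(u_1,\dots,u_m)$.

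First I would handle atomic formulas. Given $R(t_1,\dots,t_n)$, introduce fresh variables $z_1,\dots,z_n$ and rewrite as $\exists z_1\cdots z_n\,\bigl(R(z_1,\dots,z_n) \land \bigwedge_i (z_i = t_i)\bigr)$; this is correct, and the uniqueness condition holds because each $z_i$ is pinned down by $z_i = t_i$. It remains to express each $z_i = t_i$ as an E$\flat$-formula and conjoin, using Remark~\ref{rem:flat-closure}. So the core of the argument is the subclaim: for any term $t$ and any variable $z$ not occurring in $t$, the formula $z = t$ is equivalent to an E$\flat$-formula $\varphi_t(z, \bar v)$ where $\bar v$ is the tuple of variables occurring in $t$, and moreover $\models \forall \bar v\,\exists^{\leq 1} z\,\varphi_t$. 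This I would prove by induction on the structure of $t$: if $t$ is a variable $y$, then $z = y$ is already atomic flat; if $t = g(u_1,\dots,u_m)$, introduce fresh $w_1,\dots,w_m$ and take $\exists w_1\cdots w_m\,\bigl(g(w_1,\dots,w_m) = z \land \bigwedge_j \varphi_{u_j}(w_j,\cdot)\bigr)$, where $\varphi_{u_j}$ is given by the inductive hypothesis; uniqueness of $z$ follows because the $w_j$ are determined (inductively) by the variables of $u_j$, and then $z$ is determined by $g(w_1,\dots,w_m) = z$. One must be careful that the total existential block together with the conjunction of all the pieces is again an E$\flat$-formula, which is exactly the content of Remark~\ref{rem:flat-closure} (finite conjunctions of E$\flat$-formulas are E$\flat$). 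The cases $t = s$ (rewrite as $\exists z\,(\varphi_t(z,\cdot)\land \varphi_s(z,\cdot))$ or directly) and $f(t_1,\dots,t_n)=y$ are handled the same way.

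Finally, for negated atomic formulas $\lnot A$: here I would \emph{not} try to push the negation inside, since the negation of an E$\flat$-formula need not be E$\flat$. Instead, observe that the above procedure writes $A$ as $\exists y\,\varphi(x,y)$ with $\varphi$ flat and $\models \forall x\,\exists^{\leq 1}y\,\varphi(x,y)$; but since $y$ is functionally determined by $x$ in every model, we can \emph{totalize} by a case split, or more simply note that $\lnot A$ is then equivalent to $\exists y\,(\varphi(x,y) \land \lnot \varphi'(x,y))$ for a suitable flat piece — cleaner: $\lnot A(x)$ holds iff there exist $y$ witnessing all the ``naming'' equations (the flattened subterms, which always have witnesses since they just name term values) together with the negated top-level flat literal. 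Concretely, writing $A = \exists \bar z\,(\theta(x,\bar z) \land \bigwedge_i(z_i = t_i))$ with $\theta$ a single flat literal in the variables $x,\bar z$, the naming conjuncts $\bigwedge_i(z_i = t_i)$ define $\bar z$ totally over $x$, so $\lnot A \equiv \exists \bar z\,(\lnot\theta(x,\bar z) \land \bigwedge_i(z_i=t_i))$; now $\lnot\theta$ is a flat literal and, after replacing each $z_i = t_i$ by its E$\flat$-form via the subclaim and applying Remark~\ref{rem:flat-closure}, the whole thing is E$\flat$ (the uniqueness of the witness tuple is inherited). I expect the main obstacle to be exactly this last point: keeping the bookkeeping straight so that the single E$\flat$-formula one ends up with genuinely has a flat quantifier-free matrix \emph{and} genuinely satisfies the global $\exists^{\leq 1}$ uniqueness clause, in particular making sure the negation in the $\lnot\theta$ case sits in the right place (on the top-level literal only, never on a naming equation) and that all auxiliary variables remain functionally determined.
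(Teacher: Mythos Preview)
Your proposal is correct and follows essentially the same approach as the paper: prove by induction on terms that each $t(x)=y$ is equivalent to an E$\flat$-formula, then use these naming formulas to flatten the top-level relation (or its negation) while preserving the $\exists^{\leq 1}$ condition via Remark~\ref{rem:flat-closure}. The paper's proof is terser---it simply says negated atomic formulas ``can be treated similarly''---whereas you spell out explicitly why one keeps the naming equations positive and negates only the top literal $\theta$; this extra care is justified and your bookkeeping concern about uniqueness of the full witness tuple is handled exactly as you indicate.
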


\begin{proof}
We first show that for any term $t(x)$, with variables $x = (x_1,\ldots,x_n)$, there is an associated E$\flat$-formula $\varphi_t(x,y)$ such that $\varphi_t(x,y)$ is logically equivalent to $ t(x) = y$.
We apply induction on terms.
For the base case where $t(x)$ is the variable $x_k$, we let $\varphi_t(x,y)$ be $x_k = y$. Now suppose $t_1(x),\ldots,t_m(x)$ are terms and $f$ is an $m$-ary function symbol.
Then $\varphi_{f(t_1,\ldots,t_m)}$ is the E$\flat$-formula equivalent to
\[ \exists z_1\ldots z_m\, \left[\bigwedge_{i = 1}^{m} \varphi_{t_i}(x,z_i) \land (f(z_1,\ldots,z_m) = y) \right]. \]

We now show that every atomic or negated atomic formula is equivalent to an E$\flat$-formula.
Suppose $t_1(x),\ldots,t_m(x)$ are terms  and $R$ is either an $m$-ary relation symbol or $=$ (in the latter case, we have $m = 2$).
Then the atomic formula
$R(t_1(x),\ldots,t_m(x))$ is equivalent  to 
\[
\exists y_1 \ldots  y_m\, \left[ \bigwedge_{i = 1}^{m} \varphi_{t_i}(x,y_i) \land R(y_1,\ldots,y_m) \right].
\]
Negated atomic formulas can be treated similarly.  
\end{proof}

\begin{cor}\label{cor:eflat}
Every quantifier-free formula is logically equivalent to a finite disjunction of E$\flat$-formulas.
\end{cor}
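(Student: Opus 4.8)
The plan is to reduce to disjunctive normal form and then invoke the two preceding results. First I would use elementary propositional reasoning to rewrite the given quantifier-free formula $\psi$ as a finite disjunction $\bigvee_{i}\bigwedge_{j}\lambda_{ij}$ in which each $\lambda_{ij}$ is a literal; this is just putting $\psi$ into disjunctive normal form, treating its atomic subformulas as propositional variables. It is important here that the normal form be taken at the level of literals, not of arbitrary atomic-or-negated subformulas, since Lemma~\ref{lem:eflat} is phrased for literals. The degenerate cases, in which $\psi$ is logically equivalent to $\bot$ or to $\top$, are accommodated by allowing the empty disjunction and the empty conjunction respectively, so nothing special needs to be checked there.

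Next I would apply Lemma~\ref{lem:eflat} to each literal $\lambda_{ij}$, replacing it by a logically equivalent E$\flat$-formula $\chi_{ij}$; after this substitution $\psi$ is logically equivalent to $\bigvee_{i}\bigwedge_{j}\chi_{ij}$, a finite disjunction of finite conjunctions of E$\flat$-formulas. Finally I would collapse each inner conjunction using Remark~\ref{rem:flat-closure}: a finite conjunction of E$\flat$-formulas is, up to logical equivalence, again an E$\flat$-formula, obtained by renaming the existentially quantified variable blocks apart, conjoining the flat matrices, and noting that the uniqueness clause $\forall x\,\exists^{\leq 1}y$ is inherited. Applying this for each $i$ turns $\bigwedge_{j}\chi_{ij}$ into a single E$\flat$-formula, so $\psi$ is logically equivalent to a finite disjunction of E$\flat$-formulas, which is what we want.

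There is essentially no obstacle; the argument is pure bookkeeping once Lemma~\ref{lem:eflat} and Remark~\ref{rem:flat-closure} are in hand. The only point that warrants a second look is the extension of Remark~\ref{rem:flat-closure} from two conjuncts to arbitrarily many, but this is immediate by induction on the number of conjuncts, the renaming of bound variables at each step being harmless.
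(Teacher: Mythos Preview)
Your proof is correct and follows essentially the same approach as the paper: put the formula in disjunctive normal form, apply Lemma~\ref{lem:eflat} to each literal, and use Remark~\ref{rem:flat-closure} to collapse each resulting conjunction of E$\flat$-formulas into a single one. The paper's proof is simply a more compressed version of what you wrote.
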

\begin{proof}
Suppose $\varphi(x)$ is quantifier-free. Then $\varphi(x)$ is equivalent to a formula in disjunctive normal form, i.e., a finite disjunction of finite conjunctions of literals. Applying Lemma~\ref{lem:eflat} to each literal and using Remark~\ref{rem:flat-closure}, we find that $\varphi(x)$ is equivalent to a finite disjunction of E$\flat$-formulas. 
\end{proof}

\begin{rem}\label{rem:Lcupflat}
Any flat literal $L_\cup$-formula is an $L_i$-formula for some $i\in I$. As a consequence, if $\varphi(x)$ is a flat $L_\cup$-formula, then there is some finite $J\subseteq I$ and a flat $L_i$-formula $\varphi_i(x)$ for all $i\in J$ such that $\varphi(x)$ is logically equivalent to $\bigwedge_{i\in J}\varphi_i(x)$.
\end{rem}

\noindent We obtain a restatement of Theorem~\ref{thm: first Theorem} from the introduction:

\begin{thm} \label{Thm: IFvsEC}
Suppose each $T_i$ is model-complete.
Then $\sM_\cup \models T_\cup$ is interpolative if and only if it is existentially closed in the class of $T_\cup$-models. Hence, $T^*_\cup$ is precisely the model companion of $T_\cup$, if either of these exists.
\end{thm}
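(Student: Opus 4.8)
The plan is to reduce Theorem~\ref{Thm: IFvsEC} to the already-proven Theorem~\ref{thm:relativemc} by a Morleyization trick, handling the presence of function symbols carefully using Remark~\ref{rem:Lcupflat}. First I would observe that, by Remark~\ref{rem:rel-1}(1)--(3), replacing each function symbol in each $L_i$ by a relation symbol for its graph changes neither the class of interpolative structures nor (since the $T_i$ are model-complete, hence the graphs are existentially definable) the class of $T_\cup$-models up to the relevant equivalences; moreover the new $T_i$ are still model-complete. So I may assume $L_\cup$ contains no function symbols. With no function symbols in $L_\cup$, every quantifier-free $L_\cup$-formula is logically equivalent to a Boolean combination of $L_i$-formulas (indeed of atomic $L_i$-formulas for various $i$), so existential-closedness in the class of $T_\cup$-models is exactly the condition appearing in Theorem~\ref{thm:relativemc}(1) once we know that $\sM_i \elesub \sN_i$ for all $i$ whenever $\sM_\cup \subseteq \sN_\cup$ are $T_\cup$-models.

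The second key step is precisely that implication: if $\sM_\cup \subseteq \sN_\cup$ are both models of $T_\cup$, then the $L_i$-reduct inclusion $\sM_i \subseteq \sN_i$ is an embedding of $T_i$-models, and since $T_i$ is model-complete, it is elementary, i.e.\ $\sM_i \elesub \sN_i$. Combining this with the first step, Theorem~\ref{thm:relativemc}(1) tells us: $\sM_\cup \models T_\cup$ is interpolative iff whenever $\sM_\cup \subseteq \sN_\cup \models T_\cup$ and $\varphi_\cup(x)$ is a Boolean combination of $L_i$-formulas with parameters from $M$, $\sN_\cup \models \exists x\, \varphi_\cup(x)$ implies $\sM_\cup \models \exists x\, \varphi_\cup(x)$; and by the no-function-symbols reduction this is the same as saying $\sM_\cup$ is existentially closed in the class of $T_\cup$-models. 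This proves the first assertion.

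For the ``Hence'' clause, I would argue that $T_\cup$ is inductive: by Facts~\ref{fact:mcinductive} and~\ref{fact:inductive} each model-complete $T_i$ has an $\forall\exists$-axiomatization, so the union $T_\cup$ does too, hence $T_\cup$ is inductive. (Alternatively, before the Morleyization one can note inductiveness is preserved under the relevant equivalences; it is cleaner to do this after passing to relational languages, where everything is transparent.) Now Fact~\ref{fact:modelcompanion} applies to $T_\cup$: it has a model companion iff the class of its existentially closed models is elementary, and in that case the model companion is the theory of the existentially closed models. Since we have just identified the existentially closed $T_\cup$-models with the interpolative $T_\cup$-models, we conclude that $T_\cup^*$ exists iff $T_\cup$ has a model companion, and when they exist they coincide. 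Finally I would remark that the reduction to relational languages does not affect the statement, since by Remark~\ref{rem:rel-1} the interpolative fusion of the modified theories corresponds to that of the original ones, and model companions transfer along the definitional expansion.

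The main obstacle I anticipate is bookkeeping rather than conceptual difficulty: one must make sure the reduction to relation-only languages genuinely preserves both ``interpolative'' (handled by Remark~\ref{rem:rel-1}(3), and for Morleyization by part (2), though here only the function-to-relation move is needed since the $T_i$ are assumed model-complete rather than merely having QE) and ``model-complete,'' and that the equivalence of the two existence statements survives the translation in both directions. The genuinely new input beyond Theorem~\ref{thm:relativemc} is just the elementary-substructure observation $\sM_i \elesub \sN_i$, which is immediate from model-completeness of $T_i$; everything else is assembling previously established facts.
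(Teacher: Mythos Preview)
Your approach is correct, but it differs from the paper's proof. The paper argues directly in the original languages: for the ``existentially closed $\Rightarrow$ interpolative'' direction it uses model-completeness of $T_i$ to replace each $\varphi_i$ by an existential formula and then invokes Lemma~\ref{lem:sem-craig}; for the converse it uses Corollary~\ref{cor:eflat} and Remark~\ref{rem:Lcupflat} to rewrite an arbitrary quantifier-free $L_\cup$-formula as a disjunction of E$\flat$-formulas whose flat parts split into $L_i$-conjuncts, and then applies Lemma~\ref{lem:sem-craig} again. No change of language is made. Your route --- reduce to relational languages via Remark~\ref{rem:rel-1}(3), observe that quantifier-free $L_\cup$-formulas are then Boolean combinations of $L_i$-formulas, and feed this into Theorem~\ref{thm:relativemc}(1) --- is essentially the argument the paper uses later in the proof of Theorem~\ref{thm:relativemc}(3), with your nice observation that full Morleyization is unnecessary since model-completeness (not quantifier elimination) is already assumed. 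What the paper's direct proof buys is that it avoids the language-change bookkeeping entirely; what your approach buys is that it reuses Theorem~\ref{thm:relativemc}(1) as a black box.

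Two small remarks on your write-up. First, the parenthetical ``since the $T_i$ are model-complete, hence the graphs are existentially definable'' is not the right justification for why the relational $T_i$ remain model-complete: graphs of function symbols are already quantifier-free definable, and the preservation of model-completeness comes instead from the fact that $L_i$-embeddings between $T_i$-models and relational-$L_i$-embeddings between relational-$T_i$-models correspond bijectively, so elementarity transfers. Second, you mention Remark~\ref{rem:Lcupflat} in your opening sentence but never actually use it; in your reduction-to-relational-languages argument it plays no role (it is exactly the tool the paper uses in its direct argument to avoid the reduction you perform).
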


\begin{proof}
We prove the first statement. Let $\sM_\cup \models T_\cup$ be existentially closed.
Suppose $J \subseteq I$ is finite and $\varphi_i(x)$ is an $L_i(M)$-formula for each $i \in J$ such that $( \varphi_i(\sM_\cup))_{i \in J}$ is not separated.
We may assume each $\varphi_i(x)$ is existential, as $T_i$ is model-complete.
Lemma~\ref{lem:sem-craig} gives a $T_\cup$-model $\sN_\cup$ extending $\sM_\cup$ such that $\sN_\cup \models \exists x\, \bigwedge_{i \in J} \varphi_i(x)$.
As $\sM_\cup$ is existentially closed and each $\varphi_i$ is existential, we have $\sM_\cup \models \exists x\, \bigwedge_{i \in J} \varphi_i(x)$.
Thus $\sM_\cup$ is interpolative.

Now suppose $\sM_\cup \models T_\cup$ is interpolative.
Suppose $\psi(x)$ is a quantifier-free $L_\cup(M)$-formula and $\sN_\cup$ is a $T_\cup$-model extending $\sM_\cup$ such that $\sN_\cup \models \exists x\, \psi(x)$.
Applying Corollary~\ref{cor:eflat}, $\psi(x)$ is logically equivalent to a finite disjunction of E$\flat$-formulas $\bigvee_{k=1}^n \exists y_k\, \psi_k(x,y_k)$. Then for some $k$, $\sN_\cup \models \exists x\, \exists y_k\, \psi_k(x,y_k)$. By Remark~\ref{rem:Lcupflat}, the flat $L_\cup(M)$-formula $\psi_k(x,y_k)$ is equivalent to a conjunction $\bigwedge_{i \in J} \varphi_i(x,y_k)$ where $J \subseteq I$ is finite and $\varphi_i(x,y_k)$ is a flat $L_i(M)$-formula for each $i \in J$.
So $\sN_\cup \models \exists x\,\exists y_k\, \bigwedge_{i \in J} \varphi_i(x,y_k)$.
As each $T_i$ is model-complete, we have $\sM_i \elesub \sN_i$ for all $i \in I$.
By Lemma~\ref{lem:sem-craig}, the sets defined by $\varphi_i(x,y_k)$ are not separated, and since $\sM_\cup$ is interpolative, $\sM_\cup \models \exists x\,\exists y_k\, \bigwedge_{i \in J} \varphi_i(x,y_k)$.
So $\sM_\cup  \models \exists x\, \psi(x)$.  This shows $\sM_\cup$ is existentially closed. 

Each $T_i$ is model-complete and hence inductive, so $T_\cup$ is inductive. Using Fact~\ref{fact:inductive}, we get the second statement as a consequence of the first statement.
\end{proof}

\subsection{Existential interpretations}\label{sec:interp}

\noindent In this section, we recall some standard facts about interpretations. We pay special attention to the case of existential bi-interpretations between inductive theories, which preserve the existence of model companions. This allows us to sometimes identify the model companion $T^*$ of a theory $T$ of interest with an interpolative fusion of simpler theories, via a bi-interpretation. We keep the notational conventions from the introduction.

\medskip \noindent Let $T$ be an $L$-theory and $T'$ be an $L'$-theory. An \textbf{interpretation} of $T'$ in $T$, $F\colon T\rightsquigarrow T'$, consists of the following data: 
\begin{enumerate}
    \item For every sort $s'$ in $L'$, an $L$-formula $\varphi_{s'}(x_{s'})$ and an $L$-formula $E_{s'}(x_{s'},x^*_{s'})$. 
    \item For every relation symbol $R'$ in $L'$ of type $(s'_1,\dots,s'_n)$ in $L'$, an $L$-formula $\varphi_{R'}(x_{s'_1},\dots,x_{s'_n})$.
    \item For every function symbol $f'$ in $L'$ of type $(s'_1,\dots,s'_n)\to s'$ in $L'$, an $L$-formula $\varphi_{f'}(x_{s'_1},\dots,x_{s'_n},x_{s'})$. 
\end{enumerate}
We then require that for every model $\sM\models T$, the formulas above define an $L'$-structure $\sM'\models T'$ in the natural way. See~\cite[Section 5.3]{Hodges} for details.  For every sort $s'$ in $L'$, the underlying set $M'_{s'}$ of the $s'$ sort in $\sM'$ is the quotient of  $\varphi_{s'}(\sM)$ by the equivalence relation defined by $E_{s'}$. We write $\pi_{s'}$ for the surjective quotient map $\varphi_{s'}(\sM)\to M'_{s'}$.
We sometimes denote $\sM'$ by $F(\sM)$.

\medskip \noindent An interpretation $F\colon T\rightsquigarrow T'$ is an \textbf{existential  interpretation} if for each sort $s'$ in $L'$, the $L$-formula $\varphi_{s'}(x_{s'})$ is $T$-equivalent to an existential formula, and all other formulas involved in the interpretation and their negations (i.e., the formulas $E_{s'}$, $\neg E_{s'}$, $\varphi_{R'}$, $\neg\varphi_{R'}$, $\varphi_{f'}$,  and $\neg\varphi_{f'}$) are also $T$-equivalent to existential formulas.

\begin{lem}\label{lem:pullback} Suppose $F\colon T\rightsquigarrow T'$ is an existential interpretation. Let $\varphi'(y)$ be a quantifier-free $L'$-formula, where $y = (y_1,\dots,y_n)$ and $y_i$ is a variable of sort $s'_i$. Then there is an existential  $L$-formula $\widehat{\varphi}(x_{s'_1},\dots,x_{s'_n})$ such that for every $\sM\models T$ and every tuple $a = (a_1,\dots,a_n)$ with $a_i\in \varphi_{s'_i}(\sM)$, $\sM\models \widehat{\varphi}(a)$ if and only if $F(\sM)\models \varphi'(\pi_{s_1}(a_1),\dots,\pi_{s_n}(a_n))$. 
\end{lem}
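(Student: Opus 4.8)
The plan is to reduce $\varphi'$ to a normal form in which every atom is flat, translate flat literals one at a time through the data of $F$, and reassemble. First I would apply Corollary~\ref{cor:eflat} to the quantifier-free $L'$-formula $\varphi'(y)$: it is \emph{logically} equivalent to a finite disjunction $\bigvee_k \exists \bar z_k\,\psi_k(y,\bar z_k)$ of E$\flat$-formulas, so each $\psi_k$ is a conjunction of flat $L'$-literals in the variables $y$ and $\bar z_k = (z_{k,1},\dots,z_{k,m_k})$, where $z_{k,j}$ has some sort $t'_{k,j}$ of $L'$. This equivalence holds in every $L'$-structure, in particular in $F(\sM)$ for every $\sM\models T$, so it suffices to produce the translation for each disjunct and take the disjunction.

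Next I would translate a single flat $L'$-literal $\lambda$ (with variables among $y$ and $\bar z_k$) to an $L$-formula $\widehat\lambda$ using the defining data of $F$: an equality of sort $s'$ goes to $E_{s'}$, a relational atom $R'(\bar v)$ goes to $\varphi_{R'}(\bar x_v)$, a functional atom $g'(\bar v)=w$ (covering constants as the $0$-ary case) goes to $\varphi_{g'}(\bar x_v,x_w)$, and negations go to negations of these. Two facts make this work. Since the interpretation is well-defined, each of $E_{s'}$, $\varphi_{R'}$, $\varphi_{g'}$ is invariant under the relevant $E$'s; hence for all tuples lying in the domains $\varphi_{s'}(\sM)$ one has $\sM\models\widehat\lambda$ if and only if $F(\sM)\models\lambda$ under the quotient maps $\pi_{s'}$. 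And since $F$ is an existential interpretation, each $\widehat\lambda$ --- being one of $E_{s'},\neg E_{s'},\varphi_{R'},\neg\varphi_{R'},\varphi_{g'},\neg\varphi_{g'}$ --- is $T$-equivalent to an existential $L$-formula.

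Finally I would reassemble. For each $k$, let $\widehat\psi_k$ be the conjunction of the $\widehat\lambda$ over the literals $\lambda$ occurring in $\psi_k$, put
\[
\widehat\varphi_k \ :=\ \exists\, x_{z_{k,1}}\cdots x_{z_{k,m_k}}\ \Bigl(\bigwedge_{j=1}^{m_k}\varphi_{t'_{k,j}}(x_{z_{k,j}})\ \wedge\ \widehat\psi_k\Bigr),
\]
and set $\widehat\varphi:=\bigvee_k\widehat\varphi_k$. The guard conjuncts $\varphi_{t'_{k,j}}(x_{z_{k,j}})$ force the new existential variables to range exactly over the interpreted domains, so for $a$ with $a_i\in\varphi_{s'_i}(\sM)$ one checks that $\sM\models\widehat\varphi_k(a)$ iff $F(\sM)\models\exists\bar z_k\,\psi_k(\pi(a),\bar z_k)$, and then $\sM\models\widehat\varphi(a)$ iff $F(\sM)\models\varphi'(\pi(a))$ by the first step. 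For the syntactic shape: each $\varphi_{t'_{k,j}}$ is $T$-equivalent to an existential formula (as $F$ is existential), $\widehat\psi_k$ is a finite conjunction of existential formulas hence likewise, an existential quantification of an existential formula is existential, and a finite disjunction of existential formulas is $T$-equivalent to an existential one; so $\widehat\varphi$ has the required form.

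I do not expect a genuine obstacle here; the argument is essentially bookkeeping about sorts. The one point to get right is the reassembly step: the variables $\bar z_k$ introduced by unnesting $\varphi'$ are of $L'$-sorts, so translating $\exists\bar z_k$ faithfully requires both relativizing to the domain formulas $\varphi_{t'_{k,j}}$ and using that these domain formulas are themselves $T$-equivalent to existential formulas --- not merely the relation- and function-defining formulas --- which is exactly why the definition of ``existential interpretation'' builds in the condition on the $\varphi_{s'}$.
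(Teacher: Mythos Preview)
Your proof is correct and follows essentially the same approach as the paper: both apply Corollary~\ref{cor:eflat} to reduce to E$\flat$-formulas, translate flat literals via the interpretation data (using that $E_{s'}$, $\neg E_{s'}$, $\varphi_{R'}$, $\neg\varphi_{R'}$, $\varphi_{f'}$, $\neg\varphi_{f'}$ are existential), and handle the new existential quantifiers by relativizing to the domain formulas $\varphi_{s'}$ (using that these too are existential). The paper's proof is only a sketch pointing to \cite[Theorem 5.3.2]{Hodges}, whereas you have spelled out the details --- in particular your final paragraph correctly isolates exactly the point the paper flags, namely that the existentiality of the domain formulas is what makes the reassembly step go through.
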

\begin{proof}
By Corollary~\ref{cor:eflat}, $\varphi'(y)$ is equivalent to a finite disjunction of E$\flat$-formulas.
In the proof of ~\cite[Theorem 5.3.2]{Hodges}, Hodges gives an explicit translation from $L'$-formulas to $L$-formulas.
We apply this translation and observe that because $F$ is an existential interpretation, the translation of a finite disjunction of E$\flat$-formulas is an existential $L$-formula.
\end{proof}

\noindent A {\bf bi-interpretation} $(F,G, \eta, \eta')$ between $T$ and $T'$ consists of an interpretation $F\colon T\rightsquigarrow T'$ and an interpretation  $G\colon T'\rightsquigarrow T$, together with $L$-formulas and $L'$-formulas (one for each sort in $L$ and $L'$, respectively) defining for each $\sM \models T$ and each $\sN' \models T'$ isomorphisms $$\eta_{\sM}: \sM \to G(F(\sM)) \quad \text{and}  \quad \eta'_{\sN'}: \sN' \to F(G(\sN')).$$ See~\cite[Section 5.4]{Hodges} for the precise definition.
Such a bi-interpretation is  \textbf{existential} if $F$ and $G$ are each existential interpretations, and moreover the $L$-formulas and $L'$-formulas defining the isomorphisms are existential. If there is an existential bi-interpretation between $T$ and $T'$, we say that $T$ and $T'$ are existentially bi-interpretable. 
The next lemma is \cite[Exercise 5.4.3]{Hodges}.

\begin{lem} \label{lem: existentialbi-interpretation}
Suppose  $F\colon T\rightsquigarrow T'$ is existential. Then $F$ induces a functor from the category of models of $T$ and embeddings to the category of models of $T'$ and embeddings.
Suppose moreover that $(F,G, \eta, \eta')$ is an existential bi-interpretation between $T$ and $T'$. Then the induced functors form an equivalence of categories; in particular, for every $L$-embedding $f: \sM \to \sN$, the following diagram, which expresses that $\eta$ is a natural isomorphism from the identity functor to $G\circ F$, commutes:
\[\xymatrix{
\sN \ar[rr]^{\eta_{\sN}} & & G(F(\sN))\\
\sM \ar[u]^f\ar[rr]^{\eta_{\sM}} & & G(F(\sM))\ar[u]_{G(F(f))}
}\]
\end{lem}

\begin{prop}\label{prop:delta1}
Suppose $(F, G, \eta, \eta')$ is an existential bi-interpretation between $T$ and $T'$. Then $\sM$ is an existentially closed model of $T$ if and only if $F(\sM)$ is an existentially closed model of $T'$.
\end{prop}
\begin{proof}
It suffices to show that if $F(\sM)$ is an existentially closed model of $T'$, then $\sM$ is an existentially closed model of $T$. Indeed, by symmetry it follows that if $G(\sN')$ is an existentially closed model of $T$, then $\sN'$ is an existentially closed model of $T'$. And then, since $\eta_{\sM}:\sM \to G(F(\sM))$ is an isomorphism, if $\sM$ is existentially closed, then $F(\sM)$ is existentially closed.

So assume that $F(\sM)$ is an existentially closed model of $T'$. Let $f\colon \sM\to \sN$ be an embedding of $T$-models, and let $\varphi(y)$ be a quantifier-free formula with parameters from $\sM$ that is satisfied in $\sN$. By commutativity of the diagram in Lemma \ref{lem: existentialbi-interpretation}, after moving the parameters of $\varphi(y)$ into $G(F(\sM))$ by the isomorphism $\eta_\sM$, we find that $\varphi(y)$ is satisfied in $G(F(\sN))$, and it suffices to show that it is satisfied in $G(F(\sM))$.

By Lemma~\ref{lem:pullback}, there is an existential $L'$-formula $\widehat{\varphi}'(x)$ with parameters from $F(\sM)$ such that $F(\sN)\models \widehat{\varphi}'(a)$ if and only if $G(F(\sN))\models \varphi(b)$, where $b$ is the image of $a$ under the appropriate $\pi_s$ quotient maps. Writing $\widehat{\varphi}'(x)$ as $\exists z\, \psi'(x,z)$, we have $F(\sN)\models \psi'(a,c)$ for some $c$, where $a$ is any preimage of the tuple from $G(F(\sN))$ satisfying $\varphi(y)$. But since $F(\sM)$ is existentially closed, there are some $a^*$ and $c^*$ in $F(\sM)$ such that $F(\sM)\models \psi'(a^*,c^*)$, so $F(\sM)\models \widehat{\varphi}'(a^*)$, and it follows that $\varphi(y)$ is satisfied in $G(F(\sM))$, as desired. 
\end{proof}

\begin{cor}\label{cor:delta1-cor}
Suppose $T$ and $T'$ are inductive, and $(F,G, \eta, \eta')$ is an existential bi-interpretation between $T$ and $T'$. Then $T$ has a model companion $T^*$ if and only if $T'$ has a model companion $(T')^*$. Further, $(F,G, \eta, \eta')$ induces an existential bi-interpretation between $T^*$ and $(T')^*$ when they exist.

In particular, if $T_i$ is model-complete for all $i\in I$, $T$ is an inductive theory, and $(F,G, \eta, \eta')$ is an existential bi-interpretation between $T$ and $T_\cup$, then $T$ has a model companion $T^*$ if and only if the interpolative fusion $T_\cup^*$ exists. Further, $(F,G, \eta, \eta')$ induces an existential bi-interpretation between $T^*$ and $T_\cup^*$ when they exist.
\end{cor}

\begin{proof}
Suppose $T$ has a model companion $T^*$. By~\cite[Theorem 5.3.2]{Hodges}, for every $L$-sentence $\varphi\in T^*$, there is an $L'$-sentence $\varphi'$, such that for all $\sM'\models T'$, $\sM'\models \varphi'$ if and only if $G(\sM)\models \varphi$. Let $(T')^* = T'\cup \{\varphi'\mid \varphi\in T^*\}$. Then $\sM'\models (T')^*$ if and only if $\sM'\models T'$ and $G(\sM')\models T^*$. By Proposition~\ref{prop:delta1} and Fact~\ref{fact:inductive}, $\sM'\models (T')^*$ if and only if $\sM'$ is an existentially closed model of $T'$. So $(T')^*$ is the model companion of $T'$ by Fact~\ref{fact:inductive}. Proposition~\ref{prop:delta1} further implies that $\sM\models T^*$ if and only if $F(\sM)\models (T')^*$. So $(F,G, \eta, \eta')$ induces an existential bi-interpretation between the model companions. 

The application to interpolative fusions then follows immediately from the first statement and Theorem~\ref{Thm: IFvsEC}.
\end{proof}

\subsection{Examples}\label{sec:examples}
 
We now give a more detailed description of the items of Examples~\ref{ex:exampleblock1} and \ref{ex:exampleblock2}.
The first four examples are interpolative fusions from the literature.
The last four are well-known theories that we show are bi-interpretable with interpolative fusions of simpler theories. 
All of these examples are natural, as each is either the model companion of a natural theory or is the theory of an interpolative structure found in the wild.

\medskip\noindent \textbf{Generic predicates and disjoint unions of theories.} The simplest case of the notion of interpolative fusion is when the $L_i$ are pairwise disjoint. This case was treated by Winkler~\cite{Winkler}.
In our notation, Winkler showed that if $L_\cap$ is the empty language, $T_\cap$ is the theory of an infinite set, and each $T_i$ is model-complete and eliminates $\exists^\infty$, then $T_\cup$ has a model companion, which is $T_\cup^*$ by Theorem~\ref{thm: first Theorem}.
This result easily generalizes via Morleyization to show that $T^*_\cup$ exists when each $T_i$ eliminates $\exists^\infty$ and $T_\cap$ is the theory of an infinite set.

\medskip\noindent If $I = \{1,2\}$ and $T_2$ is the theory of an infinite set equipped with an infinite and co-infinite unary predicate, then $T^*_\cup$ is well-known as the expansion of $T_1$ by a generic unary predicate (see~\cite{Cha-Pi}).

\medskip\noindent  \textbf{Algebraically closed fields with independent valuations.} Let $T_\cap$ be the theory of algebraically closed fields and $T_i$ be the theory of an algebraically closed field equipped with a non-trivial valuation $v_i$  for each $i \in I$. (We let $L_i$ be the language of fields extended by a unary predicate for the valuation ring of  $v_i$ for each $i \in I$.)
Then  $T_\cup$ has a model companion $T^*_\cup$, which is the theory of an algebraically closed field $K$ equipped with a family $(v_i)_{i \in I}$ of pairwise independent valuations.
The theory $T^*_\cup$ is studied in \cite{Lou-thesis} and \cite{Johnson-thesis}.

\medskip\noindent \textbf{The group of integers with $p$-adic valuations.} Given integers $k,l$ we write $k \elesub_p l$ if the $p$-adic valuation of $k$ is no greater then the $p$-adic valuation of $l$.
Let $I$ be the set of primes, $T_\cap$ be the theory of $(\zz,+)$, and $T_p$ be the theory of $(\zz,+,\elesub_p)$ for $p \in I$.
Then the theory of $(\zz,+,(\elesub_p)_{p \in I})$ is the model companion $T^*_\cup$ of $T_\cup$, see~\cite{AldE}. In particular, $(\zz,+,(\elesub_p)_{p \in I})$ is an interpolative structure.

 \medskip\noindent \textbf{The algebraic closure of a finite field with a multiplicative circular orders.}
 Let $(\ff, +, \times)$ range over algebraic closures of finite fields, and let $\triangleleft$ range over multiplicatively invariant circular  orders on $\ff^\times$ (see \cite{Minh-1} for the definition).
 With $I =\{1,2\}$, $T_\cap $ the common theory of all $(\ff, \times)$, $T_1$ the common theory of all $(\ff, +, \times)$, and $T_2$ the common theory of all $(\ff, \times, \triangleleft)$, it follows from \cite{Minh-1} that $T_\cup$ has a model companion $T_\cup^*$, which is the common theory of all $(\ff, +, \times, \triangleleft)$. 
 In this case, $T_2$ is not model-complete, so the model-completeness assumption in Theorem~\ref{thm: first Theorem} is sufficient but not necessary.
 In fact, $T^*_\cup$ is the model companion of the theory of algebraically closed fields with a multiplicative circular order.
The initial motivation of this paper was to find a common generalization of this example, algebraically closed fields with independent valuations, and generic predicates.

 \medskip\noindent\textbf{Generic automorphisms.} Let $T_0$ be a one-sorted model-complete theory, and let $T$ be the theory whose models are $(\sM_0, \sigma)$, where $\sM_0 \models T_0$ and $\sigma$ is an automorphism of $\sM_0$.  If $T_0$ is $\text{ACF}$, then $T$ has a model companion $T^*$, which is called $\mathrm{ACFA}$~\cite{Cha-Hru}.
    In general, the question of existence of $T^*$ is subtle.

\medskip\noindent Suppose $\sM =(\sM_0, \sigma)$ is a model of $T$. Then $\sM$ is bi-interpretable with a two-sorted structure $(\sM_0, \sN_0; \iota_1, \iota_2)$, where  $\iota_1$ and $\iota_2$ are isomorphisms $\sM_0\to \sN_0$. In one direction, we can take $\sN_0 = \sM_0$, $\iota_1 = \text{id}$, and $\iota_2 = \sigma$. In the other direction, we can take $\sigma = \iota_1^{-1} \circ \iota_2$. Note also that $(\sM_0, \sN_0; \iota_1)$ and $(\sM_0, \sN_0; \iota_2)$ are both bi-interpretable with $\sM_0$.

 \medskip\noindent Let $I = \{1,2\}$, let $T_\cap$ be the two-sorted theory of two disjoint elementarily equivalent $T_0$-models, and let
$T_i$ be the two-sorted theory of two $T_0$-models with an isomorphism $\iota_i$ between them. Then $T_\cup$ is the theory of two $T_0$-models with two isomorphisms $\iota_1$ and $\iota_2$ between them. As described in the previous paragraph, $T$ and $T_\cup$ are existentially bi-interpretable. By Corollary~\ref{cor:delta1-cor}, if $T^*$ exists, then $T_\cup^*$ exists and is existentially bi-interpretable with $T^*$.  Furthermore, $T_1$ and $T_2$ are both existentially bi-interpretable with $T_0$. So, for example, $\mathrm{ACFA}$ is existentially  bi-interpretable with an interpolative fusion of two theories, each of which is existentially bi-interpretable with $\ACF$.

 \medskip\noindent \textbf{Differentially closed fields and free $\sD$-fields.} Let $T$ be the theory of differential fields of characteristic $0$ whose underlying field is algebraically closed. The theory $\text{DCF}_0$ of differentially closed fields is the model companion $T^*$ of $T$~\cite{RobinsonDCF}.

 \medskip\noindent Let $(K, \partial)$ be a model of $T$. 
Let $D = K[\varepsilon]/(\varepsilon^2)$ be the ring of dual numbers over $K$, and  $\pi : D \to K$ be the residue map $\pi(a + b\varepsilon) = a$. 
Then $(K, \partial)$ is existentially bi-interpretable with the two-sorted structure $ (K, D; \pi,\sigma_1,\sigma_2)$ where $\sigma_1: K \to D$ is given by $a \mapsto a+ 0\varepsilon$ and $\sigma_2: K \to D$ is given by $a \mapsto a+ \partial(a)\varepsilon$.
It can also be verified that $(K,D; \pi, \sigma_1)$ and $(K,D; \pi, \sigma_2)$ are isomorphic and mutually existentially interpretable with $K$. 

 \medskip\noindent Let $I =\{1,2\}$, $T_\cap = \text{Th}(K,D; \pi)$, $T_1 = \text{Th}(K,D; \pi, \sigma_1)$, and $T_2= \text{Th}(K,D; \pi, \sigma_2)$. 
Then $T_\cup$ is existentially bi-interpretable with $T$, so $T^*_\cup$ exists and is existentially bi-interpretable with $T^*= \text{DCF}_0$. 
Moreover, $T_2$ is a copy of $T_1$, and both are existentially bi-interpretable with $\mathrm{ACF}_0$. 
Thus,  $\mathrm{DCF}_0$ is existentially bi-interpretable with the interpolative fusion of two theories each of which is existentially bi-interpretable with $\mathrm{ACF}_0$.

\medskip\noindent $\mathrm{ACFA}$ and $\mathrm{DCF}_0$ admit a common generalization, the theories of existentially closed $\sD$-fields of Moosa and Scanlon~\cite{Moosa-Scanlon}. These theories are also essentially interpolative fusions; they will be discussed in future work.

\medskip\noindent \textbf{The random graph and related structures.} Let $T$ be the theory of infinite graphs with infinitely many edges. The theory of the random graph is the model companion $T^*$ of $T$. 
Let $S_V$ be the quotient $\{(v_1, v_2) \in V^2 : v_1 \neq v_2\}\slash{\sim}$, where the equivalence relation $\sim$ is defined by
$  (v_1,v_2) \sim (v_1', v_2')$ if and only if $\{v_1,v_2\} = \{v_1', v_2'\}.$ Let $\pi_V:  \{(v_1, v_2) \in V^2 : v_1 \neq v_2\} \to S_V$ be the quotient map seen as a relation on $V^2 \times S_V$,
and $E_V$ the image of $E$ under $\pi_V$, considered as a relation on $S_V$.
Then $(V, S_V; \pi_V, E_V)$ is existentially bi-interpretable with $(V;E)$.
    
 \medskip\noindent    Now suppose $I = \{1,2\}$,  $T_\cap$ is the common theory of $(V, S_V)$, $T_1$ is the common theory of $(V, S_V; \pi_V)$, and  $T_2$ is the common theory of $(V, S_V; E_V)$. 
    It can be checked that $T_1$ and $T_2$ are model-complete. Then $T_\cup$ is existentially bi-interpretable with $T$.
    Hence $T_\cup^*$ exists, and is existentially bi-interpretable $T^*$.
    It can also be shown that $T_1$ and $T_2$ are interpretable in the theory of infinite sets, so the theory of random graphs is up to existential bi-interpretation the interpolative fusion of two theories interpretable in the theory of infinite sets.
    Similar ideas also work with directed graphs, tournaments, $n$-hypergraphs, etc.

 \medskip\noindent \textbf{Generic Skolemizations.}
Suppose $L_0$ is a one-sorted language and $T_0$ is a model-complete $L_0$-theory.
Let $\varphi(x,y)$ be an $L_0$-formula with $y$ a single variable and $x$ a tuple of variables of length $n>0$, such that 
 $T_0 \models \forall x \exists^{\geq k} y\, \varphi(x,y) \text{ for all } k.$
Let $L= L_0\cup \{f\}$ with $f$ a new $n$-ary function symbol, and $T = T_0\cup \{\forall x\, \varphi(x,f(x))\}.$ Then $T$ is the ``$\varphi$-Skolemization'' of $T_0$. 
Winkler showed that $T$ has a model companion $T^*$, the ``generic $\varphi$-Skolemization'' of $T_0$, when $T_0$ eliminates $\exists^\infty$~\cite{Winkler}. Generic Skolemizations were also studied more recently in~\cite{KRExp}. 

\medskip\noindent Let $\sM = (\sM_0, f)$ range over the models of $T$. For each such $\sM$, let $E \subseteq M^{n+1}$ be defined in $\sM_0$ by $\varphi$, let $p_x: E \to M^n$ and $p_y: E \to M$ be the projection on the first $n$ coordinates and the last coordinate, respectively, and let $g: M^n \to E$ be the function $a \mapsto (a, f(a))$. Note that $p_x$ is an infinite-to-one surjection onto $M^n$, and $g$ is a section of $p_x$. Then the two-sorted structure $(\sM_0,E;p_x,p_y)$ is existentially bi-interpretable with $\sM_0$, and $(\sM_0,E;p_x,p_y,g)$ is existentially bi-interpretable with $\sM$.

\medskip\noindent Let $I=\{1, 2\}$, let $T_\cap$ the common theory of all $(M, E; p_x)$, let $T_1$ be the common theory of all $(\sM_0, E; p_x, p_y)$, and let $T_2$ be the common theory of all $(M,E; p_x, g)$. Then $T_1$ and $T_2$ are model-complete, and $T_\cup$ is existentially bi-interpretable with $T$. Hence $T_\cup^*$ exists and is bi-interpretable with $T^*$ when $T_0$ eliminates $\exists^\infty$. It can also be checked that $T_1$ is existentially bi-interpretable with $T_0$, and $T_2$ is interpretable in the theory of an infinite set.

\section{Pseudo-topological base}\label{sec: Pseudo-topological base}

\noindent In many examples, including most of those in Section~\ref{sec:examples}, the existence of the interpolative fusion can be explained by a simple idea: in the presence of a well-behaved notion of dimension on $L_\cap$-definable sets, a family $(X_i)_{i\in I}$ of $L_i$-definable sets is not separated (as defined in Section~\ref{sec:fusions}) if and only if they are simultaneously ``dense'' in some $L_\cap$-definable set. 

\medskip\noindent In this section, we provide a general framework abstracting these examples. Under very general hypotheses, we can use this framework to prove the existence of the interpolative fusion and provide an explicit $\forall\exists$-axiomatization. 

\medskip \noindent Throughout the rest of the paper, we introduce the following additional notational conventions: $L'$ is a first-order language extending $L$ with the same sorts as $L$,  $\sM'$ is an $L'$-structure and $\sM$ is its $L$-reduct, $T'$ is an $L'$-theory, and $T$ is the set of $L$-consequences of $T'$. 
Moreover, we assume the existence of a function $\dim$ that assigns an ordinal or the formal symbol $-\infty$ to each $\sM$-definable set whenever $\sM \models T$, so that for all $\sM$-definable $X,X_1,X_2\subseteq M^x$:
\begin{enumerate}
\item $\dim X_1 \cup X_2 = \max \{ \dim X_1, \dim X_2\}$;
\item $\dim X = - \infty$ if and only if $X = \emptyset$;
\item if $X$ is finite then $\dim X = 0$;
\item $\dim X=  \dim X( \sN)$ for any elementary extension $\sN$ of $\sM$.
\end{enumerate}
We call such a function $\dim$ an {\bf ordinal dimension} on $T$.
Examples include Morley rank on an $\aleph_0$-stable theory, U-rank on a  superstable theory, SU-rank on a supersimple theory, and o-minimal dimension on an o-minimal theory.
In fact, most natural examples of tame theories are equipped with a dimension, which is often canonical.

\medskip \noindent
Let $X$ be a definable subset of $M^x$ and $A$ an arbitrary subset of $M^x$.
Then $A$ is \textbf{pseudo-dense} in $X$ if $A$ intersects every nonempty definable $X' \subseteq X$ such that $\dim X' = \dim X$. We call $X$ a \textbf{pseudo-closure} of $A$ if $A \subseteq X$ and $A$ is pseudo-dense in $X$.
Lemma~\ref{lem: basicfacts} collects a few easy facts about pseudo-denseness, the proofs of which we leave to the readers.

\begin{lem}\label{lem: basicfacts}
Let $X$ and $X'$ be $\sM$-definable subsets of $M^x$, and let $A$ be an arbitrary subset of $M^x$. Then:
\begin{enumerate}
\item When $\dim X = 0$, $A$ is pseudo-dense in $X$ if and only if $X \subseteq A$.
\item If $A$ is pseudo-dense in $X$, $X' \subseteq X$, and $\dim X' = \dim X$, then $A$ is pseudo-dense in $X'$.
\item If $X^1, \ldots, X^n \subseteq X$ are $\sM$-definable, with $\dim X^i = \dim X$ for all $i$, and
$$ \dim X \setminus (X^1 \cup \ldots \cup X^n) < \dim X,$$
then $A$ is pseudo-dense in $X$ if and only if $A$ is pseudo-dense in each $X^i$.
\end{enumerate}
If in addition $X$ is a pseudo-closure of $A$, then:
\begin{enumerate}\setcounter{enumi}{3}
\item $A \subseteq X'$ implies $\dim X \leq \dim X'$.
\item If $X'$ is another pseudo-closure of $A$, then $\dim (X\triangle X') < \dim X = \dim X'$.  
\item If $A \subseteq X' \subseteq X$ then $X'$ is a pseudo-closure of $A$.
\end{enumerate}
\end{lem}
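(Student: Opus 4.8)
The plan is to verify each of the six items in sequence, treating them as straightforward consequences of the definitions of pseudo-denseness, pseudo-closure, and the axioms (1)--(3) for an ordinal rank. None of these should require more than a few lines; the whole lemma is a collection of ``sanity checks'' about the notion, so the main obstacle is really just organizing the bookkeeping cleanly, especially around the symmetric difference conditions in (3) and (5).

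For items (1)--(3): (1) follows since when $X$ is finite, $\dim X = 0$ (if nonempty) or $-\infty$, and the only definable subsets $X' \subseteq X$ with $\dim X' = \dim X$ are $X$ itself (when finite nonempty, every nonempty subset has the same dimension $0$), so pseudo-denseness forces $A$ to meet every nonempty subset of $X$, i.e.\ to contain $X$; the empty case is trivial. For (2), if $X' \subseteq X$ with $\dim X' = \dim X$ and $Y \subseteq X'$ is definable nonempty with $\dim Y = \dim X'$, then $Y \subseteq X$ and $\dim Y = \dim X$, so $A \cap Y \neq \emptyset$ by pseudo-denseness of $A$ in $X$. For (3), one direction is (2) applied to each $X^i$. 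Conversely, suppose $A$ is pseudo-dense in each $X^i$ and let $Y \subseteq X$ be definable nonempty with $\dim Y = \dim X$. Write $Y = (Y \cap X^1) \cup \dots \cup (Y \cap X^n) \cup (Y \cap (X \triangle(X^1 \cup \dots \cup X^n)))$; by axiom (1) for $\dim$ and the hypothesis $\dim X \triangle (X^1\cup\dots\cup X^n) < \dim X$, some $Y \cap X^i$ has $\dim = \dim X = \dim X^i$, hence is nonempty and met by $A$; so $A \cap Y \neq \emptyset$.

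For items (4)--(6), assume $X$ is a pseudo-closure of $A$. For (4): if $A \subseteq X'$ but $\dim X' < \dim X$, consider $X \cap X'$; it contains $A$, so in particular it is... actually the cleaner argument: $\dim(X \cap X') \le \dim X' < \dim X$, while $X \setminus X' \subseteq X$ is definable with $(X\setminus X') \cap A = \emptyset$ (as $A \subseteq X'$), and by axiom (1), $\dim(X \setminus X') = \dim X$ since $\dim(X\cap X') < \dim X$; this contradicts pseudo-denseness of $A$ in $X$. So $\dim X \le \dim X'$. For (5): if $X'$ is another pseudo-closure of $A$, then by (4) applied twice (using $A \subseteq X$ and $A \subseteq X'$), $\dim X = \dim X'$; and $X \triangle X' = (X \setminus X') \cup (X' \setminus X)$, where $X \setminus X'$ is disjoint from $A$ (since $A \subseteq X'$), so pseudo-denseness of $A$ in $X$ forces $\dim(X \setminus X') < \dim X$, and symmetrically $\dim(X' \setminus X) < \dim X'= \dim X$; by axiom (1), $\dim(X \triangle X') < \dim X = \dim X'$. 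For (6): if $A \subseteq X' \subseteq X$, then $A \subseteq X'$ gives $\dim X \le \dim X'$ by (4), while $X' \subseteq X$ gives $\dim X' \le \dim X$ by axiom (1), so $\dim X' = \dim X$; then $A$ is pseudo-dense in $X'$ by (2). I expect the only mild subtlety is making sure in (4) and (5) that subtracting a lower-dimensional set does not drop the dimension, which is exactly what axiom (1) guarantees; I will state that explicitly rather than grind it out.

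\begin{proof}
We prove (1)--(3) first. For (1), if $X$ is finite then either $X = \emptyset$ (and both sides hold trivially) or $\dim X = 0$ by axiom (3), and then every nonempty definable $X'\subseteq X$ satisfies $\dim X' = 0 = \dim X$; so $A$ is pseudo-dense in $X$ iff $A$ meets every nonempty subset of $X$, i.e.\ iff $X\subseteq A$. For (2), suppose $A$ is pseudo-dense in $X$, $X'\subseteq X$, and $\dim X' = \dim X$. If $Y\subseteq X'$ is definable, nonempty, with $\dim Y = \dim X'$, then $Y\subseteq X$ and $\dim Y = \dim X$, so $A\cap Y\neq\emptyset$; hence $A$ is pseudo-dense in $X'$. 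For (3), the forward direction is immediate from (2) applied to each $X^i$, since $\dim X^i = \dim X$. Conversely, assume $A$ is pseudo-dense in each $X^i$, and let $Y\subseteq X$ be definable, nonempty, with $\dim Y = \dim X$. Then
\[
Y = \Big(\bigcup_{i=1}^n (Y\cap X^i)\Big)\cup\Big(Y\cap \big(X\triangle(X^1\cup\dots\cup X^n)\big)\Big),
\]
and since $\dim\big(X\triangle(X^1\cup\dots\cup X^n)\big) < \dim X$, axiom (1) forces $\dim(Y\cap X^i) = \dim X = \dim X^i$ for some $i$. In particular $Y\cap X^i$ is nonempty, so $A\cap(Y\cap X^i)\neq\emptyset$, whence $A\cap Y\neq\emptyset$. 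Thus $A$ is pseudo-dense in $X$.

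Now assume $X$ is a pseudo-closure of $A$, and we prove (4)--(6). For (4), suppose $A\subseteq X'$ but $\dim X' < \dim X$. Then $\dim(X\cap X')\leq \dim X' < \dim X$, so by axiom (1) applied to $X = (X\cap X')\cup(X\setminus X')$, we get $\dim(X\setminus X') = \dim X$. But $X\setminus X'$ is a definable subset of $X$ disjoint from $A$ (as $A\subseteq X'$), contradicting the pseudo-density of $A$ in $X$. Hence $\dim X\leq \dim X'$. For (5), let $X'$ be another pseudo-closure of $A$. Applying (4) with the roles of $X$ and $X'$ interchanged gives $\dim X\leq \dim X'$ and $\dim X'\leq\dim X$, so $\dim X = \dim X'$. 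Next, $X\setminus X'$ is a definable subset of $X$ disjoint from $A$, so by pseudo-density $\dim(X\setminus X') < \dim X$; symmetrically $\dim(X'\setminus X) < \dim X' = \dim X$. By axiom (1), $\dim(X\triangle X') = \max\{\dim(X\setminus X'),\dim(X'\setminus X)\} < \dim X = \dim X'$. For (6), suppose $A\subseteq X'\subseteq X$. By (4), $\dim X\leq \dim X'$, and by axiom (1) (since $X'\subseteq X$), $\dim X'\leq\dim X$, so $\dim X' = \dim X$. Then $A$ is pseudo-dense in $X'$ by (2), and since $A\subseteq X'$, $X'$ is a pseudo-closure of $A$.
\end{proof}
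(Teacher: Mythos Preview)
Your proof is correct and complete. The paper explicitly omits the proof of this lemma, stating ``the proofs of which we leave to the readers,'' so there is no argument to compare against; your direct verification from the definitions and the rank axioms is exactly what is intended.
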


\noindent  We next introduce equivalent characterizations of interpolative structures under extra assumptions. Let $\sC$ be a collection of $\sM$-definable sets.
Then $X$ admits a \textbf{$\sC$-decomposition} if there is a finite family $(X_j)_{j \in J}$ from $\sC$ such that 
$$ \dim \Big( X \triangle \bigcup_{j \in J}X^j \Big) < \dim X, $$
and $X$ admits a {\bf $\sC$-patching} if there is a finite  family $(X^j, Y^j, f^j)_{j \in J}$ such that for all $j,j' \in J$:
\begin{enumerate}
\item $Y^j$ is in $\sC$.
\item $f^j: X^j \to Y^j$ is an $\sM$-definable bijection.
\item And finally,
$ \dim \Big( X \triangle \bigcup_{j \in J}X^j \Big) < \dim X. $
\end{enumerate}

\medskip\noindent We say that $\sC$ is a {\bf pseudo-cell collection} for $\sM$ if either every $\sM$-definable set admits a $\sC$-decomposition or $\dim$ is preserved under $\sM$-definable bijections and  every $\sM$-definable set admits a $\sC$-patching. Examples include the collection of irreducible varieties in an algebraically closed field and the collection of cells in an o-minimal structure.
 
\medskip \noindent Suppose $\dim$ is an ordinal dimension on $\text{Th}(\sM_\cap)$ and  $\sC$ is a collection of $\sM_\cap$-definable sets. 
We say $\sM_\cup$ is {\bf $\sC$-weakly interpolative}  if for all finite $J \subseteq I$, $X_\cap \in \sC$, and $( X_i)_{i \in J}$, where  $X_i$ is  $\sM_i$-definable and pseudo-dense in $X_\cap$, we have $\bigcap_{i \in J} X_i \neq \emptyset$.
If $\sC$ is the collection of all $\sM_\cap$-definable sets then we say that $\sM_\cup$ is \textbf{weakly interpolative}.
It is easy to see that if $\sC$ is a collection of pseudo-cells, then
$\sM_\cup$  is weakly interpolative if and only if $\sM_\cup$ is $\sC$-weakly interpolative.

\medskip \noindent Suppose $\sM'$ is an expansion of $\sM$.
Then $\sM'$ has \textbf{pseudo-closures in $\sM$} (with respect to the ordinal dimension $\dim$ on $\sM$) if every $\sM'$-definable set admits an $\sM$-definable pseudo-closure.
For later use, we say that $T'$ \textbf{has pseudo-closures in $T$} if every $T'$-model has pseudo-closures in its $L$-reduct.

\begin{prop}\label{prop:separation}
Suppose $J \subseteq I$ is finite and $X_i \subseteq M^x$ is $\sM_i$-definable for all $i \in J$. 
If there is an $\sM_\cap$-definable set $X$ in which each $X_i$ is pseudo-dense, then $( X_i)_{i \in J}$ is not separated. 
The converse implication holds provided $\sM_i$ has pseudo-closures in $\sM_\cap$ for all $i \in J$.
It follows that if $\sC$ is a collection of $\sM_\cap$-definable sets, then:
\begin{enumerate}
    \item If $\sM_\cup$ is interpolative, then $\sM_\cup$ is $\sC$-weakly interpolative. In particular, if  $\sM_\cup$ is   interpolative, then $\sM_\cup$ is weakly interpolative.
    \item Suppose moreover that each $\sM_i$ has pseudo-closures in $\sM_\cap$. If $\sM_\cup$ is weakly interpolative, or if $\sM_\cup$ is $\sC$-weakly interpolative and $\sC$ is a collection of pseudo-cells, then $\sM_\cup$ is interpolative.
\end{enumerate}
\end{prop}

\begin{proof}
We only prove the first two claims, since then (1) and (2) follow easily. 

For the first statement, suppose $X$ is a nonempty $\sM_\cap$-definable subset of $M^x$ in which each $X_i$ is pseudo-dense, and $(X^i)_{i \in J}$ is a family of $\sM_\cap$-definable sets satisfying  $X_i \subseteq X^i$ for each $i \in J$.
As $X_i$ is pseudo-dense in $X$ and disjoint from $X\setminus X^i$, we have $\dim X \setminus X^i < \dim X$ for all $i \in J$.
Hence, $$\dim \bigcup_{i \in J} (X \setminus X^i)  < \dim X.$$
Thus $\dim \bigcap_{i \in J}X^i \geq \dim X$, so $\bigcap_{i \in J}X^i$ is nonempty.

For the converse, assume $\sM_i$ has pseudo-closures in $\sM_\cap$ for each $i \in J$, and suppose $X_i$ is an $\sM_i$-definable set for each $i\in J$. We want to show that if there is no $\sM_\cap$-definable set $Z$ in which each $X_i$ is pseudo-dense, then $( X_i)_{i \in J}$ is separated. Simplifying notation, we assume $J = \{1,\ldots,n\}$. 
We show $(X_i)_{i = 1}^{n}$ is separated by applying simultaneous transfinite induction to $d_1,\ldots,d_n$  where $d_i$ is the dimension of any pseudo-closure of $X_i$.

Let $X^i$ be a pseudo-closure of $X_i$ for each $i$ and let
$$ Z = X^1 \cap \ldots \cap X^n .$$
If $\dim X^j = -\infty$ for some $j \in J$, then $X^j$ and $Z$ are both empty, so $(X^i)_{i = 1}^{n}$ separates $(X_i)_{i = 1}^{n}$.
If $\dim X^i = \dim Z$ for each $i$, then Lemma~\ref{lem: basicfacts}$(2)$ shows each $X_i$ is pseudo-dense in $Z$, contradiction.
After re-arranging the $X_i$ if necessary we suppose $\dim Z < \dim X^1$.
Let $Y_1 = X_1 \cap Z$.
As $(X_i)_{i = 1}^{n}$ cannot be simultaneously pseudo-dense in an $\sM_\cap$-definable set, it follows that $Y_1,X_2,\ldots,X_n$ cannot be simultaneously pseudo-dense in an $\sM_\cap$-definable set.
As the dimension of any pseudo-closure of $Y_1$ is strictly less then the dimension of $X^1$, an application of the inductive hypothesis provides $\sM_\cap$-definable sets $Y^1,\ldots,Y^n$ separating $Y_1,X_2,\ldots,X_n$.
It is easy to see 
$$ Y^1 \cup (X^1 \setminus Z),Y^2 \cap X^2,\ldots,Y^n \cap X^n$$
separates $X_1,\ldots,X_n$, which completes the proof.
\end{proof}

\noindent Let $T$ be an $L$-theory equipped with an ordinal dimension $\dim$ and $\sC$ an arbitrary collection of definable sets in $T$-models. We say that $\sC$ is a {\bf pseudo-cell collection} for $T$ if for all $\sM \models T$, $\sC \cap  \text{Def}(\sM)$ is a pseudo-cell collection for $\sM$.
\noindent We say that $T$ {\bf defines $\sC$-membership} if for every $L$-formula  $\varphi(x,y)$ there is an $L$-formula $\gamma(y)$ such that for all $\sM\models T$ and $b \in M^y$,
$$ \varphi(\sM,b) \text{ is in } \sC   \text{ if and only if }   \sM \models \gamma(b).  $$
We say that $T'$ {\bf defines pseudo-denseness over $\sC$} if for every $L'$-formula $\varphi'(x,y)$ and every $L$-formula $\varphi(x, z)$, there is an $L'$-formula $\delta'(y, z)$ such that if $\sM' \models T'$ and $c \in M^y$ with $\varphi(\sM',c) \in \sC$, then 
$$ \varphi'(\sM', b) \text{ is pseudo-dense in } \varphi(\sM', c)  \text{ if and only if }   \sM' \models \delta'(b, c).$$
If $\sC$ is the collection of all $\sM_\cap$-definable sets then we say that $T'$ {\bf defines pseudo-denseness} over $T$.

\medskip\noindent We say $T$ {\bf defines dimension} if for every ordinal $\alpha$, and every $L$-formula $\varphi(x,y)$, there is an $L$-formula $\delta_\alpha(y)$ such that for all $\sM \models T$ and $b \in M^y$
$$ \dim \varphi(\sM,b) = \alpha \quad   \text{if and only if}  \quad \sM \models \delta_\alpha(b).  $$
Note that if $T$ defines dimension then, by compactness, for every formula $\varphi(x,y)$, there are finitely many ordinals $\alpha_1, \ldots, \alpha_n$ such that for all  $\sM \models T$ and $b \in M^y$, we have $\dim(\varphi(\sM, b)) \in \{\alpha_1, \ldots, \alpha_n\} $.

\begin{prop} \label{Prop: Definingpdoverpc}
Suppose $\sC$ is a  pseudo-cell collection, $T$ defines $\sC$-membership and dimension, and $T'$ defines pseudo-denseness over $\sC$.
Then $T'$ defines pseudo-denseness over $T$.
\end{prop}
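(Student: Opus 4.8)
The goal is to prove that if $\sC$ is a collection of pseudo-cells, $T$ defines $\sC$-membership and dimension, and $T'$ defines pseudo-denseness over $\sC$, then $T'$ defines pseudo-denseness over $T$. That is, for every $L$-formula $\varphi(x,y)$ and every $L'$-formula $\varphi'(x,z)$ we must produce an $L'$-formula $\delta'(y,z)$ which, in any model $\sM'\models T'$ with parameters $b\in M^y$ and $c\in M^z$, holds exactly when $\varphi'(\sM',c)$ is pseudo-dense in $\varphi(\sM',b)$.

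The plan is to reduce the pseudo-denseness of $\varphi'(\sM',c)$ in an arbitrary $\sM$-definable set $X=\varphi(\sM,b)$ to pseudo-denseness in a finite collection of pseudo-cells covering $X$ up to lower dimension, using Lemma~\ref{lem: basicfacts}(3). First I would fix $\varphi(x,y)$ and use the definition of pseudo-cell collection: either every definable set admits a $\sC$-decomposition, or $\dim$ is preserved under definable bijections and every definable set admits a $\sC$-patching. Compactness, applied to the family of pseudo-topological data, gives a \emph{uniform} decomposition: there is a finite set $\Lambda$, and for each $\lambda\in\Lambda$ a finite index set $J_\lambda$, and for each $j\in J_\lambda$ an $L$-formula $\psi_j^\lambda(x,y)$ (in the patching case also $L$-formulas for the auxiliary sets $Y^j$ and the graphs of the bijections $f^j$), together with an $L$-formula $\theta_\lambda(y)$ partitioning parameter space, such that for $b$ with $\sM\models\theta_\lambda(b)$ the sets $\psi_j^\lambda(\sM,b)$ form a $\sC$-decomposition (resp. $\sC$-patching) of $\varphi(\sM,b)$. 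Here I use that $T$ defines $\sC$-membership (to know the $\psi_j^\lambda(\sM,b)$ land in $\sC$) and that $T$ defines dimension (to express ``$\dim(X\triangle\bigcup_j X^j)<\dim X$'' by a formula). This reduction is purely inside $L$, so it transfers to every $\sM'\models T'$ via the reduct $\sM$.

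Next I would assemble $\delta'$. By Lemma~\ref{lem: basicfacts}(3), for $b$ in the $\theta_\lambda$-piece, $\varphi'(\sM',c)$ is pseudo-dense in $\varphi(\sM',b)$ iff it is pseudo-dense in each cell $\psi_j^\lambda(\sM',b)$, after discarding the cells of dimension strictly below $\dim\varphi(\sM',b)$ (which one detects by the dimension-defining formulas). In the patching case one additionally transports along the definable bijection $f^j$ to a genuine member of $\sC$: pseudo-denseness of $\varphi'(\sM',c)$ in $\psi_j^\lambda(\sM',b)$ is equivalent to pseudo-denseness of the $f^j$-image, an $\sM'$-definable set still cut out by an $L'$-formula in $(c,b)$-parameters, inside $Y^j\in\sC$, and here one invokes the hypothesis that $T'$ defines pseudo-denseness over $\sC$ to get an $L'$-formula $\delta^\lambda_j(y,z)$ witnessing this. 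Then $\delta'(y,z)$ is the disjunction over $\lambda\in\Lambda$ of $\theta_\lambda(y)$ conjoined with the conjunction over the top-dimensional $j\in J_\lambda$ of $\delta^\lambda_j(y,z)$, where ``top-dimensional'' is itself expressed using the dimension-defining formulas applied to $\varphi(x,y)$ and the $\psi^\lambda_j(x,y)$.

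The main obstacle I anticipate is the bookkeeping around the patching case: one must carefully check that transporting the $L'$-definable set $\varphi'(\sM',c)\cap X^j$ along the $L$-definable bijection $f^j$ yields an $L'$-definable set whose defining formula has parameters only from the intended tuples, and that pseudo-denseness really is preserved under $f^j$ — which requires $\dim$ to be invariant under $\sM$-definable bijections, a hypothesis built into the definition of pseudo-cell collection precisely in the patching branch. One also needs the reassurance, from Lemma~\ref{lem: basicfacts}(2)--(3), that throwing away low-dimensional cells is harmless and that pseudo-denseness in $X$ is equivalent to pseudo-denseness in the union of the top-dimensional cells. Once these points are in place, the construction of $\delta'$ is a finite Boolean combination and the verification in an arbitrary $\sM'\models T'$ is routine; I would state this verification explicitly but not belabor the elementary dimension estimates.
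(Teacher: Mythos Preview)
The paper omits the proof entirely, calling it ``straightforward'' and leaving it to the reader. Your approach---compactness to obtain a uniform $\sC$-decomposition (or $\sC$-patching) of the family $\varphi(\sM,b)$, then Lemma~\ref{lem: basicfacts}(3) to reduce pseudo-denseness in $\varphi(\sM,b)$ to pseudo-denseness in each top-dimensional cell, and finally the hypothesis on $\sC$ for each cell---is exactly the natural argument the authors have in mind, and it is correct.

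Two small points worth tightening. First, your compactness step implicitly uses that ``these sets form a $\sC$-decomposition of $\varphi(\sM,b)$'' is first-order in $b$ and the auxiliary parameters; this requires knowing that any fixed $L$-formula takes only finitely many dimension values across models of $T$. That in turn follows from definability of dimension together with compactness: the formulas $\delta_\alpha(y)$ expressing $\dim\varphi(\sM,y)=\alpha$ partition $M^y$ in every model, so $T\cup\{\neg\delta_\alpha(y):\alpha\}$ is inconsistent and a finite subfamily already covers. Second, the cells $X^j$ in a $\sC$-decomposition need not be subsets of $X$, so to invoke Lemma~\ref{lem: basicfacts}(3) literally you should replace each top-dimensional $X^j$ by $X^j\cap X$; a second application of Lemma~\ref{lem: basicfacts}(3) (to the single set $X^j\cap X$ inside $X^j$, using $\dim(X^j\setminus X)<\dim X$) shows pseudo-denseness in $X^j$ and in $X^j\cap X$ coincide, so nothing is lost. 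Both points are routine and your outline is otherwise complete.
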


\begin{proof}
We only treat the case where $\dim$ is preserved under definable bijection and every definable set admits a $\sC$-patching.  
The other case is similar and easier.
Let $\varphi'(x,z')$ be an $L'$-formula and $\varphi(x,z)$ be an $L$-formula. 
We will produce an $L'$-formula $\delta'(z, z')$ such that whenever $\sM' \models T'$ and $\sM = \sM' \res L$, we have 
$$ \sM' \models \delta(c,c') \text{ if and only if } \varphi'(\sM',c') \text{ is pseudo dense in } \varphi(\sM,c).$$

Applying compactness we obtain $n$ and an $L$-formula $\psi(x,y,w)$ such that for all $\sM' \models T'$ and $\sM = \sM' \res L$  we have the following
\begin{enumerate}
    \item For all $d \in M^w$, the formula $\psi(x,y,d)$ defines a function $f_d$ from a subset $X_d$ of $M^x$ to a subset $Y_d$ of $M^y$.
    \item For each $c \in M^z$, there exists $J \subseteq \{1, \ldots, n\}$ and $(d_j)_{j \in J}$ from $M^w$, such that
    $(X_{d_j}, Y_{d_j}, f_{d_j})_{j \in J}$ as defined in (1)
    is a $\sC$-patching of $\varphi(\sM, c)$.
\end{enumerate}
Now we have that $\varphi'(\sM, c')$ is pseudo-dense in $\varphi(\sM, c)$ if and only if there are $J$, and $(d_j)_{j \in J}$ as in (1) and (2) such that $f_{d_j}( \varphi'(\sM, c') \cap  X_{d_j} )$ is pseudo-dense in $Y_{d_j}$ for  all $j\in J$ such that  $\dim X_{d_j} = \dim \varphi(\sM, c)$.
From the analysis above, it is easy to see that we can choose the desired formula $\delta'(z, z')$.   
\end{proof}

\noindent With all the pieces in place, we can prove the main theorem of this section, which has Theorem~\ref{thm:mainsec3} from the introduction as a special case.

\begin{thm} \label{thm:approxinterp}
Suppose $\sC$ is a collection of definable sets of $T_\cap$-models  such that $T_\cap$ defines  $\sC$-membership, and each $T_i$ defines pseudo-denseness over $\sC$. 
Then we have the following:
\begin{enumerate}
    \item The class of $\sC$-weakly interpolative $T_\cup$-models is elementary.
    \item If $\sC$ is a pseudo-cell collection for $T_{\cap}$, then the class of weakly interpolative $T_\cup$-models is elementary.
    \item If, in addition, each $T_i$ has pseudo-closures in $T_\cap$, then the interpolative fusion $T^*_\cup$ exists.
\end{enumerate}
In particular, taking $\sC$ to be the collection of all definable sets in $T_\cap$-models:
\begin{enumerate}
    \item[(4)] If $T_i$ defines pseudo-denseness over $T_\cap$ for all $i\in I$, then the class of weakly interpolative $T_\cup$-models is elementary.
    \item[(5)] If, in addition, $T_i$ has pseudo-closures in $T_\cap$ for all $i\in I$, then $T^*_\cup$ exists.
\end{enumerate}
\end{thm}

\begin{proof}
It suffices to prove (1). Then (2) follows from (1) and the fact, noted above, that $\sC$ is a   pseudo-cell collection for $T_\cap$, then
$\sM_\cup$  is weakly interpolative if and only if $\sM_\cup$ is $\sC$-weakly interpolative. Now (3) follows from Proposition~\ref{prop:separation}. Finally, (4) and (5) are special cases of (2) and (3), respectively. 

Let $\varphi_\cap(x,y)$ be an $L_\cap$-formula, let $J\subseteq I$ be finite, and let $\varphi_i(x,z_i)$ be an $L_i$-formula for each $i\in J$. Let $\gamma_\cap(y)$ be an $L_\cap$-formula defining $\sC$-membership for $\varphi_\cap(x,y)$, and let $\delta_i(y,z_i)$ be an $L_i$-formula defining pseudo-denseness over $\sC$ for $\varphi_\cap(x,y)$ and $\varphi_i(x,z_i)$ for each $i \in J$. For simplicity, we assume $J = \{1,\dots,n\}$. Then we have the following axiom: \[\forall y,z_1,\dots,z_n \left(\left(\gamma_\cap(y) \wedge \bigwedge_{i=1}^n \delta_i(y,z_i)\right)\rightarrow \exists x\, \bigwedge_{i=1}^n \varphi_i(x,z_i)\right).\]
Then $T_\cup$, together with one axiom of the above form for each choice of $\varphi_\cap(x,y)$, $J$,  and $\varphi_i(x,z_i)$ for $i \in J$ as above, axiomatizes the class of $\sC$-weakly interpolative $T_\cup$-models. 
\end{proof}

\noindent We refer to the axioms obtained in the proof of Theorem~\ref{thm:approxinterp} as the \textbf{pseudo-topological axioms} for $T^*_\cup$.

\medskip \noindent At present, we know that Theorem~\ref{thm:approxinterp} applies to all examples described in Example~\ref{ex:exampleblock1} and \ref{ex:exampleblock2} except for the integers with $p$-adic valuations and differentially closed fields.
We do not know if Theorem~\ref{thm:approxinterp} applies to differentially closed fields, as in this case we do not have a good understanding of definable sets in the base theory.

\medskip \noindent
We now show that Theorem~\ref{thm:approxinterp} does not apply to the example of the integers with $p$-adic valuations. 
Let $\dim$ be the canonical ordinal dimension on the additive group of integers, which coincides with U-rank, acl-dimension, etc., see   \cite{Conant}.

\begin{prop}\label{prop: approximable not necessary}
Suppose $(Z;+,\times)$ is an $\aleph_1$-saturated elementary extension of $(\mathbb{Z};+,\times)$.
Fix a prime $p$ and let $v_p$ be the $p$-adic valuation on $Z$. 
Given $a,b \in Z$ we declare $a \elesub_p b$ if and only if $v_p(a) \leq v_p(b)$.
Fix $N$ in $Z$ such that $v_p(N) \geq n$ for all $n \in \mathbb{N}$ and let $$E = \{z\in Z\mid N\elesub_p z\}.$$
Then $E$ does not have a pseudo-closure in $(Z;+)$.
So $(Z;+,\elesub_p)$ does not have pseudo-closures in $(Z;+)$.
\end{prop}

\begin{proof}
The quantifier elimination for $\text{Th}(\mathbb{Z};+)$ implies that every $(Z;+)$-definable subset of $Z$ is a finite union of sets of the form $(kZ + l) \setminus F$ for $k,l \in \mathbb{Z}$ and finite $F$.
So if $E$ has a pseudo-closure in $(Z;+)$, then $E$ is pseudo-dense in $kZ + l$ for some $k,l \in \mathbb{Z}$ with $k\neq 0$.

So we fix $k,l \in \mathbb{Z}$ with $k\neq 0$ and show that $E$ is not pseudo-dense in $k Z + l$.
As $\dim(kZ+l) = 1$, 
it is enough to show that $E$ is disjoint from some infinite definable subset of $kZ + l$.

If $l \neq 0$, let $n = v_p(l)$, so $l = p^nm$ for some $m\in \mathbb{Z}$ which is coprime to $p$. Then $p^{n+1}kZ + l = p^n(pkZ + m)$ is an infinite definable subset of $kZ + l$. Every element of this set has $p$-adic valuation at most $n\in \mathbb{N}$, so it is disjoint from $E$.

If $l = 0$, consider $kZ \setminus pk Z$. This is an infinite definable subset of $kZ$, and if $a \in kZ \setminus pk Z$, then $v_p(a) = v_p(k) \in \mathbb{N}$.
So $E$ is disjoint from $kZ \setminus pk Z$.
\end{proof}

\begin{rem}
One can apply the ``quasi-coset'' decompositions given in \cite[Theorem 4.10]{Conant} to show that 
$ \{ (k,l) \in \mathbb{Z}^2 : k \elesub_p l \} $
does not have a pseudo-closure in $\mathbb{Z}^2$.
This presents some technical difficulties, so we do not include it here.
As every $(\mathbb{Z};+,\elesub_p)$-definable subset of $\mathbb{Z}$ is $(\mathbb{Z};+)$-definable~\cite{AldE}, we must pass to an elementary extension to obtain a unary set without a pseudo-closure.
\end{rem}

\section{Tame topological base}\label{section-tame-topology}
\noindent We consider in this section specializations of Theorem~\ref{thm:approxinterp} to settings in which the base theory $T_\cap$ admits a well-behaved definable topology. Throughout, we maintain the notational conventions described at the beginning of Section~\ref{sec: Pseudo-topological base}, and we explore the degree to which the pseudo-topological notions defined there agree with natural topological notions.
We show, among other things, that if $\sM$ is o-minimal, then $\sM'$ has pseudo-closures in $\sM$ if and only if the closure of every $\sM'$-definable set is $\sM$-definable.
This equivalence only depends on two well-known facts from o-minimality.
One of these is known as the frontier inequality, and we refer to the other as the residue inequality. Whenever these inequalities hold in our abstract setting, we automatically obtain definability of pseudo-denseness. 

\medskip\noindent A {\bf definable topology} $\sT$ on $\sM$ consists of a topology $\sT_x$ on each $M^x$, such that $\{ \varphi(\sM,b) : b \in M^y \}$ is a basis for $\sT_x$, for some $L$-formula $\varphi(x,y)$.
Note that we also obtain a definable topology on every model of $\Th(\sM)$. For the rest of Section~\ref{section-tame-topology}, we suppose $\sT$ is a definable topology on $\sM$ and $\dim$ is an ordinal dimension on $T = \Th(\sM)$ such that $T$ defines dimension. 

\medskip\noindent 
 Let $A$ be a subset of $M^x$. We denote by $\cl(A)$ the closure of $A$ with respect to $\sT_x$.
The {\bf frontier} of $A$, $\text{fr}(A)$, is defined as $\cl(A) \setminus A$.
Since $\sT$ is a definable topology, the interior, closure, and frontier of a definable subset of $M^x$ are all definable. 

\medskip \noindent In general there need be no connection between pseudo-denseness and $\sT$-denseness.
We give conditions under which the two naturally relate.
We say $\sM$ satisfies the \textbf{frontier inequality} if $$\dim \text{fr}(X) < \dim X \quad \text{for all definable } X.$$
This is a strong assumption, which in particular implies, by a straight-forward induction on dimension, that every definable set is a Boolean combination of open definable sets.

\begin{lem}\label{lem:topo}
Suppose $\sM$ satisfies the frontier inequality and $X' \subseteq X$ are $\sM$-definable sets.
If $\dim X' = \dim X$, then $X'$ has nonempty interior in $X$. 
\end{lem}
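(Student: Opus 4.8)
The plan is to argue by contraposition: I will show that if $X'$ has empty interior in $X$, then $\dim X' < \dim X$. The key observation is that "$X'$ has empty interior in $X$" means every point of $X'$ is a limit of points in $X \setminus X'$; more precisely, $X' \subseteq \cl(X \setminus X')$, since any point of $X'$ with a neighborhood $U$ such that $U \cap X \subseteq X'$ would be an interior point of $X'$ in $X$. Thus the assumption of empty interior gives $X' \subseteq \cl(X \setminus X')$, and in particular $X' \subseteq \cl(X \setminus X') \setminus (X \setminus X') = \mathrm{fr}(X \setminus X')$, because $X'$ and $X \setminus X'$ are disjoint.

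Now I apply the frontier inequality to the $\sM$-definable set $X \setminus X'$: we get $\dim \mathrm{fr}(X \setminus X') < \dim(X \setminus X')$. Combining with the containment above and monotonicity of $\dim$ (which follows from property (1) of an ordinal rank, since $A \subseteq B$ implies $\dim B = \dim(A \cup B) = \max\{\dim A, \dim B\}$, hence $\dim A \leq \dim B$), I obtain
\[
\dim X' \leq \dim \mathrm{fr}(X \setminus X') < \dim(X \setminus X') \leq \dim X.
\]
So $\dim X' < \dim X$, which is the contrapositive of what we want. Reading this the other way: if $\dim X' = \dim X$, then $X'$ must have nonempty interior in $X$.

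I expect the only subtle point — really the only thing that needs care rather than being automatic — is the equation $X' \subseteq \mathrm{fr}(X \setminus X')$, which requires first unwinding the definition of "nonempty interior in $X$" from the excerpt (there is an open $U$ with $U \cap X \subseteq A$) to see that its negation for every point of $X'$ yields $X' \subseteq \cl(X \setminus X')$, and then using disjointness of $X'$ and $X \setminus X'$ to upgrade closure to frontier. Everything else is a one-line invocation of the frontier inequality hypothesis plus the elementary monotonicity of an ordinal rank; no transfinite induction is needed here, unlike the remark preceding the lemma which uses the frontier inequality more heavily.
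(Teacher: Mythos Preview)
Your proof is correct and follows essentially the same route as the paper's: both argue by contraposition, observe that empty interior of $X'$ in $X$ forces $X' \subseteq \mathrm{fr}(X\setminus X')$, and then apply the frontier inequality to $X\setminus X'$ to conclude $\dim X' < \dim(X\setminus X') \leq \dim X$. The paper phrases the first step as ``$X\setminus X'$ is dense in $X$'' (giving the slightly stronger $X \subseteq \cl(X\setminus X')$), but this is the same observation you make, and your added justification of monotonicity of $\dim$ is a harmless elaboration.
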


\begin{proof}
If $X'$ has empty interior in $X$, then $X\setminus X'$ is dense in $X$, and so $X'\subseteq X\subseteq \cl(X \setminus X')$. In particular, $X'\subseteq \text{fr}(X\setminus X')$. 
The frontier inequality implies $\dim X' < \dim X \setminus X' \leq \dim X$.
\end{proof}

\begin{lem}\label{lem:topo0}
The following are equivalent:
\begin{enumerate}
    \item $\sM$ satisfies the frontier inequality.
    \item If $A \subseteq M^x$ is dense in a definable $X \subseteq M^x$ then $A$ is pseudo-dense in $X$.
\end{enumerate}
\end{lem}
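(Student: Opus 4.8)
\textbf{Proof strategy for Lemma~\ref{lem:topo0}.}
The plan is to prove the two implications separately, using Lemma~\ref{lem:topo} for the direction (1)$\Rightarrow$(2) and producing a counterexample to (2) from a failure of (1) for the contrapositive of (2)$\Rightarrow$(1).

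First I would prove (1)$\Rightarrow$(2). Suppose $\sM$ satisfies the frontier inequality, let $X\subseteq M^x$ be definable, and let $A\subseteq M^x$ be dense in $X$ (with respect to $\sT_x$). To show $A$ is pseudo-dense in $X$, take an arbitrary nonempty definable $X'\subseteq X$ with $\dim X' = \dim X$; I must show $A\cap X'\neq\emptyset$. By Lemma~\ref{lem:topo}, $X'$ has nonempty interior in $X$, so there is an open $U\subseteq M^x$ with $\emptyset \neq U\cap X\subseteq X'$ (nonemptiness of $U\cap X$ follows since $X'$ is nonempty and $U\cap X \subseteq X'$ would be vacuous otherwise — more precisely, pick a point of $X'$ witnessing nonempty interior). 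Since $A$ is dense in $X$ and $U$ is open with $U\cap X\neq\emptyset$, density gives a point of $A$ in $U\cap X$, hence in $X'$. So $A\cap X'\neq\emptyset$, as required. (One should be slightly careful about the edge case $\dim X = -\infty$, i.e.\ $X=\emptyset$, where both conditions hold vacuously.)

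Next I would prove the contrapositive of (2)$\Rightarrow$(1): assuming the frontier inequality fails, I produce a definable $X$ and a set $A$ dense in $X$ but not pseudo-dense in $X$. If the frontier inequality fails, there is a definable $Y$ with $\dim\operatorname{fr}(Y)\geq \dim Y$; since $\operatorname{fr}(Y)\subseteq \cl(Y)$ and $\dim\cl(Y)=\dim Y$ (as $Y$ is dense in $\cl(Y)$, and using $\dim(\cl(Y)) = \max\{\dim Y, \dim\operatorname{fr}(Y)\}$ together with $\operatorname{fr}(Y)\subseteq\cl(Y)$... here I need $\dim\operatorname{fr}(Y) \leq \dim\cl(Y)$, which holds by monotonicity of $\dim$, itself a consequence of axiom (1) for ordinal ranks), we get $\dim\operatorname{fr}(Y) = \dim Y =: d$. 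Now set $X = \cl(Y)$ and $A = Y$. Then $A$ is dense in $X$ by definition of closure. But $X' := \operatorname{fr}(Y) = X\setminus Y$ is a nonempty definable subset of $X$ with $\dim X' = d = \dim X$, and $A\cap X' = Y\cap\operatorname{fr}(Y) = \emptyset$. So $A$ is not pseudo-dense in $X$, and (2) fails.

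The main obstacle I anticipate is bookkeeping around the dimension axioms: establishing monotonicity of $\dim$ (if $Z\subseteq Z'$ then $\dim Z\leq \dim Z'$) from axiom (1), and correctly identifying $\dim\cl(Y)$ with $\dim Y$ in the second part. These are routine but need to be spelled out since the hypothesis only gives the three listed properties of an ordinal rank plus definability of dimension (the latter is not actually needed here). A secondary subtlety is the nonemptiness of $U\cap X$ in the first part — one must extract it from the definition of "nonempty interior in $X$" applied at an actual point of $X'$ rather than treating $U\cap X\subseteq X'$ as an isolated inclusion.
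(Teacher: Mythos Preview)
Your approach matches the paper's: (1)$\Rightarrow$(2) via Lemma~\ref{lem:topo}, and (2)$\Rightarrow$(1) by taking $A=Y$ dense in $X=\cl(Y)$ with $X'=\operatorname{fr}(Y)$ as the offending full-dimensional subset (the paper phrases this direction directly rather than by contrapositive, but it is the same argument). One correctable slip in your bookkeeping: you assert $\dim\cl(Y)=\dim Y$, but absent the frontier inequality this can fail (nothing rules out $\dim\operatorname{fr}(Y)>\dim Y$); what you actually need, and what follows immediately from $\cl(Y)=Y\cup\operatorname{fr}(Y)$ and the hypothesis $\dim\operatorname{fr}(Y)\geq\dim Y$, is $\dim\operatorname{fr}(Y)=\max\{\dim Y,\dim\operatorname{fr}(Y)\}=\dim\cl(Y)$, which is exactly $\dim X'=\dim X$.
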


\begin{proof}
Suppose that $\sM$ satisfies the frontier inequality and that $A \subseteq M^x$ is dense in a definable set $X \subseteq M^x$.
Suppose $X' \subseteq X$ is definable and $\dim X' = \dim X$.
Lemma~\ref{lem:topo} implies that $X'$ has nonempty interior in $X$.
Thus $A$ intersects $X'$.
It follows that $A$ is pseudo-dense in $X$.

Conversely, assume (2), and let $X\subseteq M^x$ be definable. Since $X$ is dense in $\cl(X)$, $X$ is also pseudo-dense in $\cl(X)$. 
As $X \cap \text{fr}(X) = \emptyset$, we have $\dim \text{fr}(X) < \dim \cl(X)$.
It follows that $\dim X = \dim\cl(X)$, so the frontier inequality holds. 
\end{proof}

\noindent Pseudo-density does not, in general, imply density. 
For example, if $X \subseteq M^x$ is an infinite definable set and $p \in M^x$ does not lie in $\cl(X)$, then $X$ is pseudo-dense in $X \cup \{ p \}$ but not dense in $X \cup \{p\}$.
However, the converse to (2) does hold for certain definable sets, which we call pure dimensional.

\medskip \noindent Let $X \subseteq M^x$ be definable.
Given $p \in X$, we define
$$ \dim_p X = \min \{ \dim (U \cap X) : 
U \text{ is a definable neighborhood of } p \} .$$
We say that $X$ is \textbf{pure dimensional} if $\dim_p X = \dim X$ for all $p \in X$. Equivalently, $X$ is pure dimensional if and only if $\dim U = \dim X$ for all nonempty definable open subsets of $X$.

\begin{lem}\label{lem:topo1}
Suppose $X \subseteq M^x$ is definable.
Then the following are equivalent:
\begin{enumerate}
    \item $X$ is pure dimensional.
    \item If a subset $A$ of $M^x$ is pseudo-dense in $X$, then $A$ is dense in $X$.
\end{enumerate}
\end{lem}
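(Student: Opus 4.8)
\textbf{Proof plan for Lemma~\ref{lem:topo1}.} The statement asserts the equivalence, for a definable $X\subseteq M^x$, between dimensional purity of $X$ and the implication ``pseudo-dense in $X$ $\Rightarrow$ dense in $X$''. The plan is to prove the two directions separately, using the definable topology $\sT$, the ordinal rank $\dim$, and definability of dimension, exactly as with the preceding lemmas.

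First I would prove $(1)\Rightarrow(2)$ by contraposition. Suppose $A\subseteq M^x$ is not dense in $X$; I want to produce a definable $X'\subseteq X$ with $\dim X' = \dim X$ that $A$ misses, witnessing that $A$ is not pseudo-dense in $X$. Since $A$ is not dense in $X$, there is a point $p\in X$ and a definable open neighborhood $U$ of $p$ (using that $\sT$ has a definable basis) with $U\cap X$ disjoint from $A$. Set $X' = U\cap X$; this is definable, nonempty (it contains $p$), and open in $X$. By dimensional purity of $X$ (in the "equivalently" form stated after the definition), $\dim X' = \dim X$. Since $A\cap X' = \emptyset$, $A$ is not pseudo-dense in $X$, completing this direction.

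Next I would prove $(2)\Rightarrow(1)$, again by contraposition. Suppose $X$ is not dimensionally pure, so there is a nonempty definable $U\subseteq X$, open in $X$, with $\dim U < \dim X$. The idea is to build a pseudo-dense-but-not-dense subset of $X$ by discarding $U$. Concretely, consider $A = X\setminus U$. Then $A$ is definable; it is not dense in $X$ because $U$ is a nonempty relatively open set disjoint from $A$. It remains to check $A$ is pseudo-dense in $X$: if $X'\subseteq X$ is definable with $\dim X' = \dim X$, then since $\dim(X'\cap U)\leq \dim U < \dim X = \dim X'$, we get $\dim(X'\setminus U) = \dim X'$ by property (1) of an ordinal rank, so in particular $X'\setminus U \neq\emptyset$, i.e.\ $A$ meets $X'$. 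Hence $A$ is pseudo-dense in $X$ but not dense, so (2) fails.

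The only mild subtlety is making sure the relatively open set extracted in $(1)\Rightarrow(2)$ can be taken definable: this is where the definability of the topology $\sT$ enters, since the basic open sets $\varphi(\sM,a)$ are definable, and $U\cap X$ is then definable; dimensional purity is being used in the "open subsets of $X$ have full dimension" form, which the excerpt already records as equivalent to the neighborhood-wise definition. Neither direction requires the frontier inequality, so the lemma is genuinely a statement about dimension and the definable topology alone; I expect no real obstacle here, just careful bookkeeping of which sets are definable.
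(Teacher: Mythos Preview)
Your proposal is correct and follows essentially the same approach as the paper: both directions use the same key witnesses ($X\setminus U$ for the failure of dimensional purity, and a definable relatively open subset of full dimension for the dimensional purity case), with your $(1)\Rightarrow(2)$ being the contrapositive of the paper's direct argument and your $(2)\Rightarrow(1)$ spelling out in detail the pseudo-denseness check that the paper leaves implicit.
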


\begin{proof}
Suppose $X$ is not pure dimensional.
Let $U$ be a definable nonempty open subset of $X$ such that $\dim U < \dim X$.
Then $X \setminus U$ is pseudo-dense in $X$ and not dense in $X$.

Suppose $X$ is pure dimensional and $A$ is pseudo-dense in $X$.
Suppose $U$ is a nonempty open subset of $X$.
Then there is a definable nonempty open subset $U'$ of $U$.
Then $\dim U' = \dim X$, so $A$ intersects $U'$.
Hence $A$ is dense in $X$.
\end{proof}

\noindent Proposition~\ref{prop:dim-pure-indecom} gives another characterization of pure dimensional sets. 
We will not use this characterization, so we leave its proof to the reader. 

\begin{prop}
\label{prop:dim-pure-indecom}
Suppose $X\subseteq M^x$ is definable. If $X$ is  pure dimensional, then there are no definable sets $X^1$ and $X^2$ such that $X = X^1 \cup X^2$,  $X^1$ and $X^2$ are closed in $X$, neither $X^1$ nor $X^2$ contains the other, and $\dim X^1 \neq \dim X^2$. If $\sM$ satisfies the frontier inequality, then the converse holds.
\end{prop}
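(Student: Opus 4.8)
The statement has two parts: (i) if $X$ is dimensionally pure, then there is no decomposition $X = X^1 \cup X^2$ into proper closed subsets with $\dim X^1 \neq \dim X^2$; and (ii) assuming the frontier inequality, the converse. For part (i), I would argue by contraposition. Suppose such a decomposition exists, say $\dim X^1 < \dim X^2$. The idea is that a point $p \in X^1 \setminus X^2$ witnesses the failure of dimensional purity. Indeed, since $X^2$ is closed in $X$, the set $U = X \setminus X^2$ is open in $X$ and nonempty (as $X^1 \not\subseteq X^2$), and $U \subseteq X^1$, so $\dim U \le \dim X^1 < \dim X^2 \le \dim X$. Thus $X$ has a nonempty definable open subset of dimension strictly less than $\dim X$, which by the characterization recalled just before the proposition (``$X$ is dimensionally pure if and only if $\dim U = \dim X$ for all definable nonempty open $U \subseteq X$'') means $X$ is not dimensionally pure. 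Here I should be a little careful about definability: $X^1$ and $X^2$ are definable by hypothesis, so $U$ is definable, and the cited characterization applies directly.

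For part (ii), assume the frontier inequality and suppose $X$ is not dimensionally pure; I must produce the forbidden decomposition. Since $X$ is not dimensionally pure, there is a definable nonempty open $U \subseteq X$ with $\dim U < \dim X$ — this is exactly the situation used in the proof of Lemma~\ref{lem:topo1}. Set $X^2 = \cl_X(U)$, the closure of $U$ inside $X$ (equivalently $\cl(U) \cap X$, which is definable since $\sT$ is a definable topology), and set $X^1 = X \setminus U$. Both are closed in $X$: $X^2$ by construction, and $X^1$ because $U$ is open in $X$. They cover $X$, and neither contains the other ($U \subseteq X^2$ but $U \cap X^1 = \emptyset$, while $X^1$ is nonempty since $\dim X^1 = \dim X > \dim U$ forces $X^1 \neq \emptyset$, and $X^1 \not\subseteq X^2$ would need checking). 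The key dimension computation is $\dim X^2 < \dim X$: we have $X^2 = U \cup \mathrm{fr}_X(U) \subseteq U \cup \mathrm{fr}(U)$, and the frontier inequality gives $\dim \mathrm{fr}(U) < \dim U$, so $\dim X^2 \le \max\{\dim U, \dim \mathrm{fr}(U)\} = \dim U < \dim X$; meanwhile $\dim X^1 = \dim X$ by additivity of $\dim$ (property (1) of an ordinal rank) since $\dim(X) = \dim(X^1 \cup U) = \max\{\dim X^1, \dim U\}$ and $\dim U < \dim X$. Hence $\dim X^1 = \dim X > \dim X^2$, giving the decomposition.

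The main obstacle is the bookkeeping around ``closed in $X$'' versus closed in $M^x$, and making sure all the sets involved remain definable — this is where the hypothesis that $\sT$ is a \emph{definable} topology is essential (so that closures and frontiers of definable sets are definable). A secondary subtlety in part (ii) is verifying $X^1 \not\subseteq X^2$: if it failed we would have $X = X^2 = \cl_X(U)$, so $U$ would be dense in $X$, hence by Lemma~\ref{lem:topo0} (using the frontier inequality) pseudo-dense in $X$, and then by Lemma~\ref{lem: basicfacts}(4) applied to the pseudo-closure $X$ we would get $\dim U \geq \dim X$, contradicting $\dim U < \dim X$. So this case cannot occur, and the decomposition is genuinely nontrivial. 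Since the problem explicitly leaves the proof to the reader and these are all routine manipulations with the ordinal rank axioms and the definable topology, I would simply sketch the two directions as above rather than writing out every verification.
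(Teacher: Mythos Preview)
Your proof is correct. The paper explicitly leaves this proposition to the reader, so there is no proof to compare against; your argument is exactly the kind of routine verification the authors have in mind.

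One minor simplification: in part (ii), your verification that $X^1 \not\subseteq X^2$ via pseudo-denseness and Lemma~\ref{lem: basicfacts}(4) is correct but unnecessarily indirect. You have already established $\dim X^2 \le \dim U < \dim X$, so if $X^1 \subseteq X^2$ then $X = X^1 \cup X^2 = X^2$, contradicting $\dim X^2 < \dim X$ immediately. No appeal to pseudo-denseness is needed.
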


\noindent 
For a definable $X\subseteq M^x$, we define the \textbf{essence} of $X$, $\core(X)$, and the \textbf{residue} of $X$, $\rs(X)$:
\begin{align*}
\core(X) &= \{p \in X : \dim_p X = \dim X\}\\
\rs(X) &= \{ p \in X : \dim_p X < \dim X \}
\end{align*}
As $\sT_x$ admits a definable basis, and $T$ defines dimension, it follows that $\core(X)$ and $\rs(X)$ are definable.

\medskip\noindent We say that $\sM$ satisfies the \textbf{residue inequality} if
$$ \dim \rs(X) < \dim X \quad \text{for all definable } X. $$

\begin{lem}\label{lem:core}
If $\sM$ satisfies the residue inequality, then for all definable $X\subseteq M^x$, $\core(X)$ is pure dimensional. 
\end{lem}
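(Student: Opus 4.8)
**Plan for proving Lemma~\ref{lem:core} (that $\core(X)$ is dimensionally pure when $\sM$ satisfies the residue inequality).**

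The plan is to argue directly by unwinding the definitions of $\core$ and of dimensional purity, using the residue inequality to control the "bad" part that could destroy purity. Fix a definable $X \subseteq M^x$, and write $d = \dim \core(X)$. First I would observe that $\core(X) \neq \emptyset$ implies $d = \dim X$: indeed, if $p \in \core(X)$ then $\dim_p X = \dim X$, and since any definable neighborhood $U$ of $p$ satisfies $\dim(U \cap X) \geq \dim_p X = \dim X$, while on the other hand $U \cap \core(X) \subseteq U \cap X$ shows... — actually the cleaner route is: $X = \core(X) \cup \rs(X)$ by definition, and the residue inequality gives $\dim \rs(X) < \dim X$, so $\dim \core(X) = \dim X$ by the max property (axiom (1) of an ordinal rank). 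This handles the case $X \neq \emptyset$; if $X = \emptyset$ the statement is vacuous.

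Next, to show $\core(X)$ is dimensionally pure, by the reformulation given just after the definition of dimensional purity, it suffices to show that every nonempty definable open subset $U \subseteq \core(X)$ (open in the subspace topology on $\core(X)$) satisfies $\dim U = \dim \core(X) = \dim X$. So fix such a $U$ and pick $p \in U$. Since $U$ is open in $\core(X)$, there is a definable open $V \subseteq M^x$ with $p \in V$ and $V \cap \core(X) = U$ (using that $\sT_x$ has a definable basis, shrink $V$ to a basic open set containing $p$). Now $p \in \core(X)$ means $\dim_p X = \dim X$, so $\dim(V \cap X) \geq \dim_p X = \dim X$, hence $\dim(V \cap X) = \dim X$. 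But $V \cap X = (V \cap \core(X)) \cup (V \cap \rs(X)) = U \cup (V \cap \rs(X))$, and $\dim(V \cap \rs(X)) \leq \dim \rs(X) < \dim X$ by the residue inequality and monotonicity of $\dim$ under subsets (which follows from axiom (1): $A \subseteq B$ gives $\dim B = \dim(A \cup (B\setminus A)) \geq \dim A$). Therefore by the max property $\dim U = \dim(V \cap X) = \dim X$, as desired.

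The only mild subtlety — and the place I'd be most careful — is the interplay between "open in $\core(X)$" versus "open in $X$ or in $M^x$", and making sure the characterization of dimensional purity I invoke ("$\dim U = \dim X$ for all definable, nonempty $U$ open in $X$") is applied to the space $\core(X)$ rather than to $X$; here "open in $X$" should be read as "open in the relevant definable set", i.e. $\core(X)$ itself carries the subspace topology. I should also double-check that $\dim_p X$, $\core(X)$, $\rs(X)$ are all definable (stated in the excerpt: $\sT_x$ admits a definable basis and $T$ defines dimension), so that $U$ and $V \cap \rs(X)$ are legitimate objects to take dimensions of. No genuinely hard step is expected; the whole argument is bookkeeping with the ordinal-rank axioms and the residue inequality.
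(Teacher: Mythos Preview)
Your argument is correct and follows essentially the same route as the paper's proof: both first use $X = \core(X) \cup \rs(X)$ together with the residue inequality to get $\dim \core(X) = \dim X$, then for $p \in \core(X)$ and a definable neighborhood $V$ of $p$, decompose $V \cap X = (V \cap \core(X)) \cup (V \cap \rs(X))$ and apply the residue inequality again. The only cosmetic difference is that the paper works directly with the definition via $\dim_p \core(X)$, whereas you invoke the equivalent characterization in terms of definable open subsets; and one small slip: after shrinking $V$ to a basic open set you only get $V \cap \core(X) \subseteq U$, not $= U$, but this is harmless since $\dim(V \cap \core(X)) = \dim X$ then gives $\dim U \geq \dim X$, and $U \subseteq \core(X)$ gives the reverse inequality.
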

\begin{proof}
Let $p\in \core(X)$, and let $U$ be a definable neighborhood of $p$. We need to show that $\dim(U\cap \core(X)) = \dim \core(X)$. By the residue inequality, $\dim \rs(X) < \dim X$. So  $\dim \core(X) = \dim X = \dim(U\cap X)$ because $p\in \core(X)$. Thus, \[\dim \core(X) = \dim(U\cap X)= \dim((U\cap X)\setminus \rs(X)) =\dim
(U\cap \core(X)).\qedhere\]
\end{proof}

\noindent We will not use Proposition~\ref{prop:residue}, but we include it here, since it provides additional motivation for the residue inequality. 

\begin{prop}\label{prop:residue}
$\sM$ satisfies the residue inequality if and only if every definable set is a finite disjoint union of pure dimensional definable sets. 
\end{prop}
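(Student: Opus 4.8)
\textbf{Proof strategy for Proposition~\ref{prop:residue}.}

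The plan is to prove both directions by induction on $\dim X$, using the residue inequality to peel off a lower-dimensional piece at each stage. For the forward direction, assume $\sM$ satisfies the residue inequality and let $X \subseteq M^x$ be definable. First I would invoke Lemma~\ref{lem:core}: the set $\core(X)$ is dimensionally pure, and since $X = \rs(X) \sqcup \core(X)$ with $\dim \rs(X) < \dim X$ by the residue inequality, we have $\dim\core(X) = \dim X$. Now apply the inductive hypothesis to $\rs(X)$, which has strictly smaller dimension: it is a finite disjoint union of dimensionally pure definable sets. Adjoining $\core(X)$ to this collection expresses $X$ as a finite disjoint union of dimensionally pure definable sets. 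The base case $\dim X = -\infty$ (i.e.\ $X = \emptyset$) is trivial, and $\dim X = 0$ (finite $X$) is handled by noting each singleton is dimensionally pure.

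For the converse, suppose every definable set is a finite disjoint union of dimensionally pure definable sets, and let $X \subseteq M^x$ be definable; I want $\dim\rs(X) < \dim X$. Write $X = X^1 \sqcup \dots \sqcup X^n$ with each $X^j$ dimensionally pure, and without loss of generality order them so that $\dim X^1 = \dots = \dim X^k = \dim X$ and $\dim X^j < \dim X$ for $j > k$. The key point is to show $\rs(X) \subseteq (X^{k+1}\cup\dots\cup X^n)$ up to a set of dimension $< \dim X$: if $p \in X^1$, say, then since $X^1$ is dimensionally pure, every definable neighborhood $U$ of $p$ satisfies $\dim(U \cap X^1) = \dim X^1 = \dim X$, hence $\dim(U \cap X) = \dim X$, so $\dim_p X = \dim X$ and $p \notin \rs(X)$. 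Thus $\rs(X) \cap (X^1 \cup \dots \cup X^k) = \emptyset$, giving $\rs(X) \subseteq X^{k+1}\cup\dots\cup X^n$, which has dimension $< \dim X$ by property (1) of the ordinal rank. Therefore $\dim\rs(X) < \dim X$.

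The main obstacle I anticipate is the argument in the forward direction that the decomposition can be taken \emph{disjoint} — but here this is automatic because $\rs(X)$ and $\core(X)$ are disjoint by construction, and the inductive decomposition of $\rs(X)$ is disjoint by hypothesis, so no extra care is needed. A more genuine subtlety is ensuring the inductive hypothesis applies: this requires that $\dim\rs(X)$ really is strictly smaller, which is exactly the residue inequality, so the induction is well-founded on the ordinals. One should also double-check the edge case where $\dim\rs(X) = \dim X$ is forbidden but $\rs(X)$ could a priori be empty — in that case $X = \core(X)$ is already dimensionally pure and we are done immediately. Since the proposition is flagged as not used in the sequel, a clean two-paragraph write-up along these lines suffices.
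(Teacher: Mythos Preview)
Your proposal is correct and follows essentially the same argument as the paper: the forward direction inducts on $\dim X$ using Lemma~\ref{lem:core} to split $X = \core(X)\sqcup\rs(X)$, and the converse shows $\rs(X)$ is contained in the union of the low-dimensional pieces of the given decomposition. Your phrase ``up to a set of dimension $<\dim X$'' in the converse is unnecessary hedging---you then correctly prove the exact containment $\rs(X)\subseteq X^{k+1}\cup\dots\cup X^n$, just as the paper does---and the separate treatment of $\dim X = 0$ is harmless but redundant, since the general inductive step already covers it.
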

\begin{proof}
Suppose first that $\sM$ satisfies the residue inequality. Let $X\subseteq M^x$ be definable. We argue by induction on $\dim X$. If $\dim X = -\infty$, then $X = \emptyset$ and the conclusion holds vacuously. Otherwise, $X$ is the disjoint union of $\core(X)$ and $\rs(X)$. By Lemma~\ref{lem:core}, $\core(X)$ is pure dimensional, and by the residue inequality $\dim \rs(X) < \dim X$, so by induction $\rs(X)$ is a finite disjoint union of pure dimensional definable sets. 

Conversely, for any definable set $X$, suppose that $X$ is a disjoint union of pure dimensional definable sets $Y_1,\ldots, Y_m$. We will show that $\dim \rs(X) < \dim X$. We may assume without loss of generality that $1 \leq j \leq m$ is such that $$\dim Y_k = \dim X \text{ when } k \leq j \quad \text{ and} \quad \dim Y_k < \dim X  \text{ when } k > j.$$ 
Let $p\in \rs(X)$, and suppose for contradiction that  $p\in Y_k$ for some $k\leq j$. Then since $Y_k$ is pure dimensional, $\dim_p Y_k = \dim Y_k = \dim X$, so for any definable neighborhood $U$ of $p$, $$\dim X = \dim (U\cap Y_k) \leq \dim (U\cap X)\leq \dim X.$$ So $\dim_p X = \dim X$, contradicting the fact that $p\in \rs(X)$. Thus $\rs(X) \subseteq \bigcup_{k > j} Y_k$, and $\dim \rs(X) \leq \dim \bigcup_{k > j} Y_k < \dim X$. 
\end{proof}

\noindent We say $\sT$ is \textbf{$\dim$-compatible}  if  $\sM$ satisfies both the frontier inequality and the residue inequality.
Definability of the dimension and the topology ensure that $\dim$-compatibility is an elementary property, i.e., the topology on any model of $T$ is $\dim$-compatible.

\begin{prop}\label{prop:topo-2}
 Suppose $\sT$ is $\dim$-compatible. Suppose $X \subseteq M^x$ is definable and $A \subseteq M^x$. 
Then $A$ is pseudo-dense in $X$ if and only if $A$ is dense in $\core(X)$.
\end{prop}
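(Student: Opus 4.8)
The plan is to prove both implications of the biconditional using the lemmas already assembled, after reducing from $X$ to the dimensionally pure set $\core(X)$. First, recall the key relationships. By Lemma~\ref{lem:core}, $\core(X)$ is dimensionally pure (using the residue inequality, part of $\dim$-compatibility). By Lemma~\ref{lem:topo1} applied to the dimensionally pure set $\core(X)$, a subset $A$ is pseudo-dense in $\core(X)$ if and only if $A$ is dense in $\core(X)$. So the statement to prove reduces to: $A$ is pseudo-dense in $X$ if and only if $A$ is pseudo-dense in $\core(X)$. The connection between these two is that $X \setminus \core(X) = \rs(X)$ has dimension strictly less than $\dim X$ by the residue inequality.

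For the forward direction, suppose $A$ is pseudo-dense in $X$. Since $\core(X) \subseteq X$ and $\dim \core(X) = \dim X$ (as $\dim \rs(X) < \dim X$ forces $\core(X)$ to carry the full dimension of $X$ via the max property of $\dim$), Lemma~\ref{lem: basicfacts}(2) gives that $A$ is pseudo-dense in $\core(X)$, hence dense in $\core(X)$ by Lemma~\ref{lem:topo1}. For the converse, suppose $A$ is dense in $\core(X)$, hence pseudo-dense in $\core(X)$ by Lemma~\ref{lem:topo1}. I want to conclude $A$ is pseudo-dense in $X$: given a definable $X' \subseteq X$ with $\dim X' = \dim X$, I need $A \cap X' \neq \emptyset$. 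Write $X' = (X' \cap \core(X)) \cup (X' \cap \rs(X))$. Since $\dim(X' \cap \rs(X)) \leq \dim \rs(X) < \dim X = \dim X'$, the additivity of $\dim$ (axiom (1) for ordinal ranks) forces $\dim(X' \cap \core(X)) = \dim X' = \dim X = \dim \core(X)$. Thus $X' \cap \core(X)$ is a definable subset of $\core(X)$ of full dimension, so pseudo-denseness of $A$ in $\core(X)$ yields $A \cap X' \cap \core(X) \neq \emptyset$, hence $A \cap X' \neq \emptyset$. This shows $A$ is pseudo-dense in $X$.

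The argument is essentially routine bookkeeping with the dimension axioms and the two earlier lemmas; there is no serious obstacle. The one point requiring slight care is that both the frontier inequality and the residue inequality are genuinely used: the residue inequality (via Lemma~\ref{lem:core}) gives dimensional purity of $\core(X)$, which is what lets Lemma~\ref{lem:topo1} translate pseudo-denseness into topological density; and the frontier inequality is implicitly packaged into the hypothesis that $\sT$ is $\dim$-compatible, though in fact only the residue inequality is strictly needed for this particular proposition. I would present the proof in the two-paragraph structure above, invoking Lemmas~\ref{lem: basicfacts}, \ref{lem:topo1}, and \ref{lem:core} by name and otherwise citing only the defining axioms of an ordinal rank.
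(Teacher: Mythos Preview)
Your overall strategy matches the paper's: reduce pseudo-denseness in $X$ to pseudo-denseness in $\core(X)$ via the residue inequality, then pass between pseudo-denseness and topological density in the dimensionally pure set $\core(X)$. However, there is a genuine gap in your converse direction.

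You write that ``by Lemma~\ref{lem:topo1} applied to the dimensionally pure set $\core(X)$, a subset $A$ is pseudo-dense in $\core(X)$ if and only if $A$ is dense in $\core(X)$,'' and later ``$A$ is dense in $\core(X)$, hence pseudo-dense in $\core(X)$ by Lemma~\ref{lem:topo1}.'' But Lemma~\ref{lem:topo1} only gives one direction: for dimensionally pure $X$, pseudo-dense implies dense. The implication \emph{dense $\Rightarrow$ pseudo-dense} is exactly the content of Lemma~\ref{lem:topo0}, and that lemma is equivalent to the frontier inequality. Without it, a definable $X' \subseteq \core(X)$ of full dimension could have empty interior in $\core(X)$, so topological density of $A$ would not force $A \cap X' \neq \emptyset$. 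Your direct argument in the second paragraph does not escape this: you correctly show $\dim(X' \cap \core(X)) = \dim \core(X)$, but then you need $A$ to be \emph{pseudo}-dense in $\core(X)$ to conclude $A$ meets $X' \cap \core(X)$, and you only have topological density. Consequently, your final remark that ``only the residue inequality is strictly needed for this particular proposition'' is incorrect: both halves of $\dim$-compatibility are used, and the paper accordingly cites Lemma~\ref{lem:topo0}, Lemma~\ref{lem:topo1}, and Lemma~\ref{lem:core}. The fix is simply to invoke Lemma~\ref{lem:topo0} for the dense-to-pseudo-dense step.
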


\begin{proof}
Since $\dim \rs(X) < \dim X$ and $\dim \core(X) = \dim X$, $A$ is pseudo-dense in $X$ if and only if $A$ is pseudo-dense in $\core(X)$. The equivalence then follows from Lemma~\ref{lem:topo0}, Lemma~\ref{lem:topo1}, and Lemma~\ref{lem:core}.
\end{proof}

\begin{prop}\label{prop:top-def-ps}
Suppose $\sT$ is $\dim$-compatible. Any expansion $T'$ of $T$ defines pseudo-denseness over $T$.
\end{prop}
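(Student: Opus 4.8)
The plan is to show that for any $L'$-formula $\varphi'(x,z)$ and $L$-formula $\varphi(x,y)$, pseudo-denseness of $\varphi'(\sM',c)$ in $\varphi(\sM',b)$ is an $L'$-definable condition on $(b,c)$, using the characterization from Proposition~\ref{prop:topo-2}: $A$ is pseudo-dense in $X$ if and only if $A$ is dense in $\core(X)$. First I would note that since $\sT$ is a definable topology and $T$ defines dimension, the assignment $b\mapsto \core(\varphi(\sM',b))$ is uniformly $L$-definable: there is an $L$-formula $\chi(x,y)$ such that $\chi(\sM',b) = \core(\varphi(\sM',b))$ for all $b$ (this uses the definability of $\dim_p X$ in families, which in turn follows from the definable basis for $\sT$ together with definability of dimension). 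Then $\varphi'(\sM',c)$ is pseudo-dense in $\varphi(\sM',b)$ if and only if $\varphi'(\sM',c)$ is dense in $\chi(\sM',b)$ with respect to $\sT_x$.

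Next I would express topological density as a definable condition. Fix an $L$-formula $\theta(x,w)$ giving an open basis for $\sT_x$, so the basic open sets are the $\theta(\sM',a)$ for $a\in M^w$. A set $A$ is dense in a set $Z$ if and only if every basic open set that meets $Z$ also meets $A$; equivalently, for every $a$, if $\theta(\sM',a)\cap Z\neq\emptyset$ then $\theta(\sM',a)\cap A\neq\emptyset$. Applying this with $Z = \chi(\sM',b)$ and $A = \varphi'(\sM',c)$, the desired condition $\delta'(y,z)$ is
\[
\forall w\,\bigl(\exists x\,(\theta(x,w)\land \chi(x,y))\rightarrow \exists x\,(\theta(x,w)\land \varphi'(x,z))\bigr),
\]
which is an $L'$-formula since $\chi$ and $\theta$ are $L$-formulas and $\varphi'$ is an $L'$-formula. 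By Proposition~\ref{prop:topo-2} this $\delta'(y,z)$ defines pseudo-denseness over $T$, as required by the definition. Since $T'$ is an arbitrary expansion of $T$ and $\delta'$ works for all models of $T'$ (the topology on each model of $T$ is $\dim$-compatible and the formulas $\theta$, $\chi$ are fixed), this completes the proof.

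The main obstacle, and the step deserving the most care, is justifying that $\core(\varphi(\sM',b))$ is given by a single $L$-formula $\chi(x,y)$ uniformly in $b$. This requires unwinding the definition $\core(X) = \{p\in X: \dim_p X = \dim X\}$: one must see that $\dim_p X$, as $p$ and the parameter defining $X$ vary, takes only finitely many ordinal values on any given family (so that "$\dim_p X = \dim X$" can be written with finitely many instances of the dimension-defining formulas $\delta_\alpha$), and that "$\dim(U\cap X) = \alpha$" for $U$ a basic open neighborhood is definable in the parameters. This is essentially the remark already made in the text that $\core(X)$ and $\rs(X)$ are definable "as $\sT_x$ admits a definable basis, and $T$ defines dimension"; the proof should simply spell out that this definability is uniform in families, which is routine given the hypotheses but is the one place where the definability assumptions on $\sT$ and $\dim$ are genuinely used.
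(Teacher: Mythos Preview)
Your overall strategy is exactly the paper's: reduce pseudo-denseness to topological density in $\core(X_b)$ via Proposition~\ref{prop:topo-2}, observe that $\core$ is uniformly definable from the definable topology and definable dimension, and then express density by quantifying over the definable basis. Your discussion of the uniform definability of $\core$ is at the same level of detail as the paper's.

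There is, however, a genuine error in the explicit formula you write for $\delta'(y,z)$. Your formula
\[
\forall w\,\bigl(\exists x\,(\theta(x,w)\land \chi(x,y))\rightarrow \exists x\,(\theta(x,w)\land \varphi'(x,z))\bigr)
\]
expresses the condition $\core(X_b)\subseteq \cl(X'_c)$, i.e.\ every basic open meeting $\core(X_b)$ also meets $X'_c$ somewhere in $M^x$. But the notion of ``dense in $Z$'' used in Lemmas~\ref{lem:topo0} and~\ref{lem:topo1} (and hence in Proposition~\ref{prop:topo-2}) is density in the subspace topology: every nonempty relatively open subset of $Z$ meets $A$, equivalently $U\cap Z\neq\emptyset$ implies $U\cap Z\cap A\neq\emptyset$. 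The proof of Lemma~\ref{lem:topo0} needs this stronger reading to conclude that $A$ meets $X'$ from the fact that $X'$ has nonempty interior in $X$. Your weaker condition is not equivalent to pseudo-denseness: take $\sM$ an o-minimal expansion of a real closed field, $X_b$ a line in the plane (so $\core(X_b)=X_b$), and $X'_c$ the complement of that line. Every basic open box meeting the line also meets its complement, so your $\delta'$ holds; but $X'_c\cap X_b=\emptyset$, so $X'_c$ is certainly not pseudo-dense in $X_b$.

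The fix is immediate: replace the consequent by $\exists x\,(\theta(x,w)\land \chi(x,y)\land \varphi'(x,z))$, so that the basic open is required to meet $X'_c$ \emph{inside} $\core(X_b)$. With that correction your argument goes through and matches the paper's.
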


\begin{proof}
Suppose $\sM$ is a $T$-model and $\sM'$ is a $T'$-model expanding $\sM$.
Suppose $(X_b)_{b \in M^y}$ and $(X'_c)_{c \in M^z}$ are families of subsets of $M^x$, which are $\sM$-definable and $\sM'$-definable, respectively. By Proposition~\ref{prop:topo-2}, $X'_c$ is pseudo-dense in $X_b$ if and only if $X'_c$ is dense in $\core(X_b)$. 

Using definability of the topology and dimension, essences of definable sets are uniformly definable, i.e., there is an $\sM$-definable family $(Y_b)_{b\in M^y}$ such that $Y_b = \core(X_b)$ for all $b\in M^y$. Thus $X'_c$ is pseudo-dense in $X_b$ if and only if $X'_c$ is dense in $Y_b$. And using definability of the topology, the set of all $(b,c)$ such that $X'_c$ is dense in $Y^b$ is definable.
\end{proof}

\begin{prop}\label{prop:topo-3}
Suppose $\sT$ is $\dim$-compatible. Suppose $\sM'$ expands $\sM$. 
Then $\sM'$ has pseudo-closures in $\sM$ if and only if the closure of any $\sM'$-definable set is $\sM$-definable.
\end{prop}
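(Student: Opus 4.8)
The plan is to prove both directions using the machinery developed in this subsection, principally Proposition~\ref{prop:topo-2}, which characterizes pseudo-denseness in terms of topological density on the essence. Recall that $\sM'$ is approximable over $\sM$ means every $\sM'$-definable set admits an $\sM$-definable pseudo-closure.

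\emph{($\Leftarrow$)} First I would assume the closure of every $\sM'$-definable set is $\sM$-definable, and show that $\cl(X')$ is a pseudo-closure of $X'$ for any $\sM'$-definable $X' \subseteq M^x$. Clearly $X' \subseteq \cl(X')$. Since $X'$ is dense in $\cl(X')$ by definition of closure, Lemma~\ref{lem:topo0} (which applies because $\sM$ satisfies the frontier inequality, being part of $\dim$-compatibility) gives that $X'$ is pseudo-dense in $\cl(X')$. Hence $\cl(X')$ is an $\sM$-definable pseudo-closure of $X'$, so $\sM'$ is approximable over $\sM$.

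\emph{($\Rightarrow$)} For the converse, suppose $\sM'$ is approximable over $\sM$, and let $X' \subseteq M^x$ be $\sM'$-definable. Let $Z$ be an $\sM$-definable pseudo-closure of $X'$. The claim will be that $\cl(X') = \cl(\core(Z))$, which is $\sM$-definable since $\core(Z)$ is $\sM$-definable (essences of $\sM$-definable sets are $\sM$-definable, using definability of the topology and dimension). To see this: since $X'$ is pseudo-dense in $Z$, Proposition~\ref{prop:topo-2} says $X'$ is dense in $\core(Z)$, so $\core(Z) \subseteq \cl(X')$, hence $\cl(\core(Z)) \subseteq \cl(X')$. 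For the reverse inclusion, I need $X' \subseteq \cl(\core(Z))$. We have $X' \subseteq Z = \core(Z) \cup \rs(Z)$, so it suffices to show that every point of $X' \cap \rs(Z)$ lies in $\cl(\core(Z))$; equivalently, I will argue $X' \cap \rs(Z)$ has empty interior relative to $X'$, or directly that $X' \subseteq \cl(\core(Z))$.

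The main obstacle is this last inclusion $X' \subseteq \cl(\core(Z))$. The natural approach: suppose not, so some $p \in X'$ has an open neighborhood $U$ with $U \cap \core(Z) = \emptyset$, i.e. $U \cap Z \subseteq \rs(Z)$. Then $X' \cap U \subseteq \rs(Z)$, a set of dimension $< \dim Z = \dim \core(Z)$. But $X'$ is pseudo-dense in $Z$; apply Lemma~\ref{lem: basicfacts} or work directly — since $X'$ is dense in $\core(Z)$ and $U$ is open with $p \in X' \subseteq Z$, one should derive that $U \cap \core(Z)$ is nonempty (this is where one must be slightly careful: $p \in X' \subseteq \cl(X') $ and $X'$ dense in $\core(Z)$ forces $p \in \cl(\core(Z))$ directly, so $U \cap \core(Z) \ne \emptyset$), a contradiction. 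So in fact $X' \subseteq \cl(\core(Z))$ follows once we know $\core(Z) \subseteq \cl(X')$ and $X'$ is dense in $\core(Z)$: indeed density of $X'$ in $\core(Z)$ gives $\core(Z) \subseteq \cl(X')$, but I want the other direction $X' \subseteq \cl(\core(Z))$, which needs that $X'$ is ``close to'' $\core(Z)$. Here is the cleaner route: $X'$ is dense in $\core(Z)$ means $\cl(\core(Z)) \subseteq \cl(X' \cap \core(Z)) \subseteq \cl(X')$ — wrong direction again. The honest statement is $\cl(X') \supseteq \core(Z)$ hence $\cl(X') \supseteq \cl(\core(Z))$; and separately $X' \subseteq \cl(X')$. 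So I should instead prove $\cl(X')$ itself is $\sM$-definable by showing $\cl(X') = \cl(\core(Z))$ via: ``$\supseteq$'' is immediate from $\core(Z) \subseteq \cl(X')$; ``$\subseteq$'' requires $X' \subseteq \cl(\core(Z))$. For a point $p \in X' \setminus \core(Z) \subseteq \rs(Z)$, any definable neighborhood $U$ of $p$ has $\dim(U \cap Z) = \dim_p Z < \dim Z$ wait — that is not what we want either. I expect the resolution uses that $X' \setminus \core(Z) \subseteq \rs(Z)$ has dimension strictly less than $\dim \core(Z)$, combined with pseudo-density of $X'$ in $Z$ applied to the $\sM$-definable set $\core(Z)$ (which has full dimension), forcing $X'$ to meet every full-dimensional relatively-open piece of $\core(Z)$; then a frontier-inequality induction shows $X' \setminus \core(Z)$, being lower-dimensional, is contained in $\cl(X' \cap \core(Z)) \subseteq \cl(\core(Z))$. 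Pinning down this induction cleanly is the technical heart of the argument; I would phrase it as a short lemma that a pseudo-dense subset $A$ of a dimensionally pure definable set $Y$ has $A \subseteq \cl(A \cap Y) = \cl(A)$ with $\cl(A) \supseteq Y$, applied with $Y = \core(Z)$ together with $X' \cap \rs(Z)$ handled by the frontier inequality.
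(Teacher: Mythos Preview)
Your backward direction is correct and matches the paper.

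Your forward direction has a genuine gap: the claimed equality $\cl(X') = \cl(\core(Z))$ is simply false in general. Take $\sM$ to be an o-minimal expansion of the reals and $X' = (0,1) \cup \{2\} \subseteq \mathbb{R}$. A pseudo-closure is $Z = [0,1] \cup \{2\}$, with $\core(Z) = [0,1]$ and $\rs(Z) = \{2\}$. Then $\cl(\core(Z)) = [0,1]$ but $\cl(X') = [0,1] \cup \{2\}$. The isolated point $2$ lies in $X' \cap \rs(Z)$ and is nowhere near $\cl(\core(Z))$; nothing forces it to be. Your attempts to derive $X' \subseteq \cl(\core(Z))$ keep stalling precisely because the inclusion does not hold.

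What you are missing is induction on $\dim Z$. The paper writes
\[
\cl(X') \;=\; \cl\bigl(X' \cap \core(Z)\bigr) \,\cup\, \cl\bigl(X' \cap \rs(Z)\bigr).
\]
The first piece equals $\cl(\core(Z))$ since $X'$ is dense in $\core(Z)$ (Proposition~\ref{prop:topo-2}), and this is $\sM$-definable. For the second piece, the set $X' \cap \rs(Z)$ is $\sM'$-definable and contained in $\rs(Z)$, so any pseudo-closure of it has dimension at most $\dim \rs(Z) < \dim Z$ by the \emph{residue} inequality (not the frontier inequality). By the inductive hypothesis, $\cl(X' \cap \rs(Z))$ is $\sM$-definable. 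Your final sentence gestures at ``$X' \cap \rs(Z)$ handled by the frontier inequality,'' but the frontier inequality is the wrong tool here, and without the explicit induction the argument does not close.
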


\begin{proof}
Suppose that the closure of any $\sM'$-definable set is $\sM$-definable. Then for any $\sM'$-definable $X\subseteq M^x$, $\cl(X)$ is a pseudo-closure of $X$ by Lemma~\ref{lem:topo0}.

Conversely, suppose $\sM'$ has pseudo-closures in $\sM$ and $X' \subseteq M^x$ is $\sM'$-definable. Let $X$ be a pseudo-closure of $X'$. We apply induction to the dimension of $X$. If $\dim X = -\infty$, then $X'$ is empty and trivially $\sM$-definable.
Now suppose $\dim X \geq 0$.
We have
$$ \cl(X') =  \cl(X'\cap \core(X)) \cup \cl(X'\cap \rs(X)).$$
Since $X'$ is pseudo-dense in $X$, $X'$ is dense in $\core(X)$ by Proposition~\ref{prop:topo-2}. It follows that $\cl(X'\cap \core(X)) = \cl(\core(X))$, which is $\sM$-definable. 
As $(X'\cap \rs(X)) \subseteq \rs(X)$, any pseudo-closure of $(X' \cap \rs(X))$ has dimension at most $\dim \rs(X) < \dim X$. So $\cl(X'\cap \rs(X))$ is $\sM$-definable by induction. Thus $\cl(X')$ is a union of two $\sM$-definable sets and is therefore $\sM$-definable.
\end{proof}

\noindent We conclude this section by giving examples of structures with $\dim$-compatible definable topologies.
In those examples, $\sT$ and $\dim$ are canonical, so we do not describe them in detail.
In each case, existence of pure dimensional decompositions (and hence the residue inequality, by Proposition~\ref{prop:residue}) follows from the appropriate cell decomposition or ``weak cell decomposition''.
In different settings, cells (or ``weak cells'') have different definitions, but they are easily seen to be pure dimensional in each case.

\medskip \noindent The most familiar case is when $\sM$ is an o-minimal expansion of a dense linear order, see \cite{lou-book}.
Similarly, it follows from \cite[Propositions 4.1 and 4.3]{SW-tametop} that if $\sM$ is a dp-minimal expansion of a divisible ordered abelian group, then the usual order topology is compatible with dp-rank (and dp-rank agrees with several other natural notions of dimension~\cite[Proposition 2.4]{SW-tametop}).
This covers the case when $\sM$ is an expansion of an ordered abelian group with weakly o-minimal theory.
It is shown in Johnson's thesis \cite{Johnson-thesis} that if $\sM$ is a dp-minimal, but not strongly minimal, expansion of a field, then $\sM$ admits a canonical non-discrete definable field topology. Taking the product topology on $M^n$ for all $n$, it is shown in \cite{SW-tametop} that this topology is compatible with dp-rank.
This covers the case of a C-minimal expansion of an algebraically closed field or a P-minimal expansion of a $p$-adically closed field.
It was previously shown in \cite{KDL-pmin} that P-minimal expansions of $p$-adically closed fields satisfy the frontier inequality and admit pure dimensional decompositions.

\medskip \noindent We say that $T$ is an \textbf{open core} of $T'$ if the closure of every $T'$-definable set in every $T'$-model $\sM'$ is $\sM = \sM' | L$ definable.
Propositions~\ref{prop:top-def-ps} and \ref{prop:topo-3} together yield Theorem~\ref{thm:psd4}, which generalizes Theorem~\ref{thm: mainsection5} from the introduction.

\begin{thm}\label{thm:psd4}
If $T_\cap$ admits a definable ordinal dimension $\dim$ and a 
$\dim$-compatible definable topology, and $T_\cap$ is an open core of each $T_i$, then $T^{*}_\cup$ exists.
In particular, if $T_\cap$ is an o-minimal expansion of a dense linear order or a P-minimal expansion of a $p$-adically closed field, and $T_\cap$ is an open core of each $T_i$, then $T^{*}_\cup$ exists. 
\end{thm}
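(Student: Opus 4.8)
The proof of Theorem~\ref{thm:psd4} is essentially a matter of assembling the machinery developed throughout Section~\ref{sss-tame-topology} together with Theorem~\ref{thm:approxinterp}. The plan is to verify that the two hypotheses of Theorem~\ref{thm:approxinterp}(2) are satisfied under the stated assumptions: namely, that each $T_i$ defines pseudo-denseness over $T_\cap$, and that each $T_i$ is approximable over $T_\cap$.

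\medskip\noindent First I would observe that since $T_\cap$ admits an ordinal rank $\dim$ for which $T_\cap$ defines dimension, and a $\dim$-compatible definable topology $\sT$, we are exactly in the standing setting of the latter part of Section~\ref{sss-tame-topology}. Each $T_i$ is an expansion of $T_\cap$ (recall $L_\cap \subseteq L_i$ and $T_\cap$ is the set of $L_\cap$-consequences of $T_i$). By Proposition~\ref{prop:top-def-ps}, any expansion of $T_\cap$ defines pseudo-denseness over $T_\cap$, so in particular each $T_i$ does; this takes care of the first hypothesis with no further work. For the second hypothesis, the assumption that $T_\cap$ is an open core of $T_i$ means precisely that the closure of every $T_i$-definable set in every $T_i$-model $\sM_i$ is already $\sM_\cap$-definable, where $\sM_\cap = \sM_i\res L_\cap$. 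By Proposition~\ref{prop:topo-3}, this is equivalent to $\sM_i$ being approximable over $\sM_\cap$; since this holds for every $T_i$-model, $T_i$ is approximable over $T_\cap$. With both hypotheses of Theorem~\ref{thm:approxinterp}(2) verified, that theorem immediately yields that $T^*_\cup$ exists, proving the general statement.

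\medskip\noindent For the final ``in particular'' clause, I would invoke the examples collected in the discussion immediately preceding the theorem. If $T_\cap$ is an o-minimal expansion of a dense linear order, then the order topology and the o-minimal dimension give a $\dim$-compatible definable topology (cell decomposition provides dimensionally pure decompositions, hence the residue inequality by Proposition~\ref{prop:residue}, and the frontier inequality is classical), so the general statement applies. Likewise, if $T_\cap$ is a p-minimal expansion of a p-adically closed field, the results of \cite{KDL-pmin} (frontier inequality and dimensionally pure decompositions, hence the residue inequality) together with the definable field topology supply a $\dim$-compatible definable topology, and again the general statement applies.

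\medskip\noindent I do not anticipate a genuine obstacle here: the theorem is a corollary whose content lies entirely in the two propositions it cites, and the real work was done in establishing the frontier and residue inequalities and their consequences (Lemmas~\ref{lem:topo}--\ref{lem:core}, Propositions~\ref{prop:topo-2}, \ref{prop:top-def-ps}, \ref{prop:topo-3}) and in Theorem~\ref{thm:approxinterp}. The only point requiring a modicum of care is matching up the bookkeeping — confirming that ``$T_\cap$ is an open core of $T_i$'' as defined just before the theorem is literally the hypothesis of Proposition~\ref{prop:topo-3} applied uniformly across all $T_i$-models, and that the standing assumption ``$\dim$ is an ordinal rank on $T$ such that $T$ defines dimension, and $\sT$ is a $\dim$-compatible definable topology on $\sM$'' in force for Section~\ref{sss-tame-topology} is exactly what the theorem hypothesizes for $T_\cap$.
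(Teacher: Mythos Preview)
Your proposal is correct and matches the paper's approach exactly: the paper simply states that Proposition~\ref{prop:top-def-ps} and Proposition~\ref{prop:topo-3} together yield the theorem, and you have spelled out precisely how they feed into Theorem~\ref{thm:approxinterp}(2). Your handling of the ``in particular'' clause via the examples discussed just before the theorem is also what the paper intends.
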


\noindent We give an application of Theorem~\ref{thm:psd4}. 
Suppose $T_\cap$ is a complete and model-complete o-minimal theory extending the theory of ordered abelian groups.
For each $i \in I$, let $T_i$ be the theory of a $T$-model $\sN$ equipped with a unary predicate $R_i$ defining a dense elementary substructure of $\sN$.
Then $T_i$ is model-complete by \cite[Thm 1]{dense-pairs} and $T_\cap$ is an open core of $T_i$ \cite[Section 5]{DMS}.
Applying Theorem~\ref{thm:psd4}, we see that the theory $T_\cup$ of a $T$-model $\sN$ equipped with a family $(R_i)_{i \in I}$ of unary predicates defining dense elementary substructures of $\sN$ has a model companion.

\section{$\aleph_0$-stable base}\label{section:aleph-base}
\noindent In almost all of the examples from Section~\ref{sec:examples}, the base theory $T_\cap$ is $\aleph_0$-stable (in fact, $T_\cap$ is almost always interpretable in $\ACF$ or the theory of an infinite set). In this section, we specialize Theorem~\ref{thm:approxinterp} to this setting, where the natural ordinal dimension is Morley rank, and we obtain pseudo-closures for free. 

\noindent We keep the additional notational conventions described at the beginning of Section~\ref{sec: Pseudo-topological base}. Throughout this section, $T$ is $\aleph_0$-stable, $\dim$ is Morley rank, and $\mult$ is Morley degree.

\medskip \noindent Suppose $X^1$ and $X^2$ are $\sM$-definable subsets of $M^x$.
We will say that $X^1$ is {\bf almost a subset} of $X^2$ and write $X^1 \subseteq_{\text{a}} X^2$ if
$$    \dim( X^1 \setminus X^2) < \dim (X^1).$$
We will say that  $X^1$ is {\bf almost equal} to $X^2$ and write $X^1 =_{\text{a}} X^2$ if $X^1\subseteq_{\text{a}} X^2$ and $X^2 \subseteq_{\text{a}} X^1$. It is easy to see that $=_\text{a}$ is an equivalence relation.
An $\sM$-definable subset $X$ of $M^x$ is \textbf{almost  irreducible} if whenever $X = X^1 \cup X^2$ for $\sM$-definable $X^1$ and $X^2$, we have $X =_{\text{a}} X^1$ or $X =_{\text{a}} X^2$.
Any $\sM$-definable set of Morley degree one is almost irreducible, and the converse holds when $\text{Th}(\sM)$ defines 
Morley rank or when $\sM$ is $\aleph_0$-saturated.

\medskip\noindent Proposition~\ref{prop:induced-1} is the main advantage of assuming that $T_\cap$ is $\aleph_0$-stable in our setting.

\begin{prop}\label{prop:induced-1}
Let $\sM\models T$. Every $A \subseteq M^x$ has a pseudo-closure. More precisely, an $\sM$-definable set $X \subseteq M^x$ is a pseudo-closure of $A$ if and only if $A \subseteq X$ and for all $\sM$-definable  $X' \subseteq M^x$ with $A \subseteq X'$,
$$(\dim X, \mult X) \leq_{\mathrm{Lex}} (\dim X', \mult X').$$ 
It follows that every expansion of $T$ has pseudo-closures in $T$.
\end{prop}
\begin{proof}
By standard properties of Morley rank and degree in $\aleph_0$-stable theories, for any $\sM$-definable $X$ and $X'$, if $(\dim X',\mult X')<_{\mathrm{Lex}} (\dim X,\mult X)$, then $\dim (X\setminus X') = \dim X$. If $X'\subseteq X$, then the converse is true. 
 
Let $X$ be a pseudo-closure of $A$, so $A\subseteq X$, and suppose for contradiction that there is some $\sM$-definable $X'\subseteq M^x$ with $A\subseteq X'$ and $(\dim X',\mult X') <_{\mathrm{Lex}} (\dim X, \mult X)$. Then $\dim(X\setminus X') = \dim X$, but $A\cap (X\setminus X') = \emptyset$, contradicting the fact that $A$ is pseudo-dense in $X$. 

Conversely, suppose $A\subseteq X$ and $(\dim X, \mult X)$ is minimal in the lexicographic order among $\sM$-definable sets containing $A$. Then for any $\sM$-definable $X'\subseteq X$ with $\dim X' = \dim X$, $(\dim (X\setminus X'),\mult (X\setminus X')) <_{\mathrm{Lex}} (\dim X,\mult X)$. It follows that $A\not\subseteq (X\setminus X')$, so $A\cap X'\neq \emptyset$. Hence $X$ is a pseudo-closure of $A$.
\end{proof}

\noindent Corollary~\ref{cor:omega-stable-pseudo 1} now follows immediately from Proposition~\ref{prop:separation}.

\begin{cor}\label{cor:omega-stable-pseudo 1}
If $\Th(\sM_\cap)$ is $\aleph_0$-stable and $\dim$ is Morley rank, then $\sM_\cup$ is interpolative if and only it is weakly interpolative.
\end{cor}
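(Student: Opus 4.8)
\begin{proof}[Proof of Corollary~\ref{cor:omega-stable-pseudo 1}]
The plan is to derive this directly from Corollary~\ref{cor: approximatelyinterpolative} together with the approximability furnished by Proposition~\ref{prop:omega-stable-pseudo}. One implication is unconditional: by Corollary~\ref{cor: approximatelyinterpolative}, if $\sM_\cup$ is interpolative then it is approximately interpolative, with no hypothesis on $T_\cap$ needed.

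For the converse, the point is that the $\aleph_0$-stability hypothesis on $\Th(\sM_\cap)$ supplies exactly the approximability condition required by the second half of Corollary~\ref{cor: approximatelyinterpolative}. Indeed, each $\sM_i$ is an expansion of its $L_\cap$-reduct $\sM_\cap$, and $\Th(\sM_\cap)$ is $\aleph_0$-stable with $\dim$ equal to Morley rank, so Proposition~\ref{prop:omega-stable-pseudo} shows that $\sM_i$ is approximable over $\sM_\cap$; this holds for every $i \in I$. Applying the second statement of Corollary~\ref{cor: approximatelyinterpolative}, we conclude that if $\sM_\cup$ is approximately interpolative then it is interpolative. Combining the two implications gives the claim.
\end{proof}

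\begin{rem}
The main content here is entirely in Proposition~\ref{prop:omega-stable-pseudo}: the well-foundedness of the lexicographic order on pairs $(\dim X, \mult X)$ guarantees that pseudo-closures always exist in an $\aleph_0$-stable base, which is precisely the obstruction one must overcome to pass from approximate interpolativity back to interpolativity. No further argument specific to $\sM_\cup$ is needed.
\end{rem}
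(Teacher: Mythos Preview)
Your proof is correct and matches the paper's intended argument exactly: the paper states this corollary without proof, immediately after Proposition~\ref{prop:omega-stable-pseudo}, precisely because it follows at once from that proposition together with Corollary~\ref{cor: approximatelyinterpolative}, just as you spell out.
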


\noindent In Proposition~\ref{prop: approximable not necessary}, we gave a  concrete example of an expansion of $T = \Th(\mathbb{Z};+)$ that does not have pseudo-closures in $T$.
It is well known that $T$ is superstable but not $\aleph_0$-stable, so this demonstrates that superstability is not sufficient for Proposition~\ref{prop:induced-1}. For the reader who is still looking for a free ride outside of the $\aleph_0$-stable context, Proposition~\ref{prop:omeg-approx} will dash this hope. 

\medskip\noindent  If $\dim_1, \dim_2$ are ordinal dimensions on an $L^\diamondsuit$-theory $T^\diamondsuit$ then we say $\dim_1$ is \textbf{smaller than} $\dim_2$ if $\dim_1 X \leq \dim_2 X$ for all definable sets $X$.

\begin{rem}\label{rem:RM1}
The theory $T^\diamondsuit$ is $\aleph_0$-stable if and only if it admits an ordinal dimension $\dim$ such that for every $T^\diamondsuit$-model $\sM^\diamondsuit$, $\sM^\diamondsuit$-definable set $X$, and family $(X_n)_{n \in \mathbb{N}}$ of pairwise disjoint $\sM^\diamondsuit$-definable subsets of $X$, we have $\dim X_n < \dim X$ for some $n$.
If $T^\diamondsuit$ is $\aleph_0$-stable, then Morley rank is the smallest ordinal dimension with this property. 
\end{rem}

\begin{prop}\label{prop:omeg-approx}
Suppose $L^\diamondsuit$ is countable and $\dim^\diamondsuit$ is an ordinal dimension on a complete $L^\diamondsuit$-theory $T^\diamondsuit$. If $T^\diamondsuit$ is not $\aleph_0$-stable, then there is an expansion of $T^\diamondsuit$ that does not have pseudo-closures in $T^\diamondsuit$.
\end{prop}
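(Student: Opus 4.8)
The plan is to exhibit a single expansion of $T^\diamondsuit$, together with one model of it, that witnesses the failure of approximability.

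First, since $T^\diamondsuit$ is not $\aleph_0$-stable, Lemma~\ref{lem:RM1} shows that $\dim^\diamondsuit$ cannot satisfy the chain condition appearing there, so there are a model $\sM^\diamondsuit\models T^\diamondsuit$, an $\sM^\diamondsuit$-definable set $X$, and pairwise disjoint $\sM^\diamondsuit$-definable sets $(X_n)_{n\in\mathbb N}$ with $X_n\subseteq X$ for all $n$ and with no $n$ satisfying $\dim^\diamondsuit X_n<\dim^\diamondsuit X$; since always $\dim^\diamondsuit X_n\le\dim^\diamondsuit X$, this forces $\dim^\diamondsuit X_n=\dim^\diamondsuit X=:d$ for every $n$. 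As $X$ contains infinitely many pairwise disjoint nonempty definable sets it is infinite, so $d\ge 1$ and every $X_n$ is infinite. Using that $L^\diamondsuit$ is countable, I would pass to a countable elementary submodel $\sN^\diamondsuit\preccurlyeq\sM^\diamondsuit$ containing the (countably many) parameters defining $X$ and the $X_n$; pairwise disjointness, containment in $X$, infiniteness, and the equalities $\dim^\diamondsuit X_n=\dim^\diamondsuit X$ all transfer to $\sN^\diamondsuit$ (the last using that an ordinal rank on $T^\diamondsuit$ is preserved in elementary extensions). Hence I may assume $\sM^\diamondsuit$ is countable, so it has only countably many definable sets.

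Second, I would build $A\subseteq X$ as a ``thin transversal'' of the $X_n$ by diagonalization. Call an $\sM^\diamondsuit$-definable set $Z$ \emph{bad} if $\dim^\diamondsuit(Z\cap X_n)<d$ for all $n$, and fix an enumeration $(Z_k)_{k\in\mathbb N}$ of the bad sets (repeating entries if necessary; if there are none, no constraints arise below). For each $k$, since $\dim^\diamondsuit(Z_k\cap X_k)<d=\dim^\diamondsuit X_k$ the set $X_k\setminus Z_k$ is nonempty, so choose $a_k\in X_k\setminus Z_k$; for indices not matched to a bad set choose $a_k\in X_k$ arbitrarily. Put $A=\{a_k:k\in\mathbb N\}$: since the $a_k$ lie in pairwise disjoint sets, $A$ is infinite and $A\cap X_n=\{a_n\}$ for every $n$. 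Now let $\widetilde L=L^\diamondsuit\cup\{P\}$ with $P$ a new relation symbol of the appropriate arity and sort, let $\widetilde\sM=(\sM^\diamondsuit,A)$ interpret $P$ as $A$, and set $\widetilde T=\Th(\widetilde\sM)$. Since $T^\diamondsuit$ is complete, the $L^\diamondsuit$-consequences of $\widetilde T$ are exactly $\Th_{L^\diamondsuit}(\sM^\diamondsuit)=T^\diamondsuit$, so $\widetilde T$ is an expansion of $T^\diamondsuit$.

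Finally, I would verify that the $\widetilde\sM$-definable set $A=P^{\widetilde\sM}$ has no $\sM^\diamondsuit$-definable pseudo-closure, which makes $\widetilde\sM$, and hence $\widetilde T$, not approximable over $T^\diamondsuit$. Suppose $Z$ is $\sM^\diamondsuit$-definable, $A\subseteq Z$, and $A$ is pseudo-dense in $Z$. If $Z$ is bad then $Z=Z_k$ for some $k$, contradicting $a_k\in A\subseteq Z$ and $a_k\notin Z_k$; so $Z$ is not bad, meaning $\dim^\diamondsuit(Z\cap X_n)=d$ for some $n$, whence $\dim^\diamondsuit Z\ge d$. If $\dim^\diamondsuit Z>d$, then as $A\subseteq X$ and $\dim^\diamondsuit(Z\cap X)\le\dim^\diamondsuit X=d<\dim^\diamondsuit Z$, the set $Z\setminus X$ is an $\sM^\diamondsuit$-definable subset of $Z$ of dimension $\dim^\diamondsuit Z$ disjoint from $A$, contradicting pseudo-density. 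If $\dim^\diamondsuit Z=d$, then $Z'=(Z\cap X_n)\setminus\{a_n\}$ is an $\sM^\diamondsuit$-definable subset of $Z$, disjoint from $A$ since $A\cap X_n=\{a_n\}$, and of dimension $d=\dim^\diamondsuit Z$ because it is obtained by deleting one point from $Z\cap X_n$, which is infinite of dimension $d\ge1$; again this contradicts pseudo-density. So no such $Z$ exists. The step I expect to be the main obstacle is the diagonalization ensuring that every $\sM^\diamondsuit$-definable set containing $A$ meets some $X_n$ in full dimension — this is precisely why the reduction to a countable base model is needed — and once it is in place, the rest is routine manipulation with the axioms of an ordinal rank.
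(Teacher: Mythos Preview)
Your proof is correct, but it takes a different route from the paper's. Both arguments share the same opening: invoke Lemma~\ref{lem:RM1} to obtain $X$ and pairwise disjoint $X_n \subseteq X$ all of dimension $d = \dim X$, then reduce to a countable model so that there are only countably many definable sets. The divergence is in how one produces a set without a pseudo-closure.

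The paper argues by cardinality. For each nonempty $S \subseteq \mathbb{N}$ it sets $A_S = \bigcup_{n \in S} X_n$, and observes that any pseudo-closure of $A_S$ has dimension exactly $d$. If $S \not\subseteq S'$, pick $n \in S \setminus S'$: then $X_n$ is a full-dimensional subset of any pseudo-closure of $A_S$ that is disjoint from $A_{S'}$, so no single definable set can serve as a pseudo-closure for both. Taking an uncountable antichain $\mathfrak{J}$ of subsets of $\mathbb{N}$, the map $S \mapsto (\text{pseudo-closure of } A_S)$ cannot be injective into the countable collection of definable sets, so uncountably many $A_S$ have no pseudo-closure at all; any one of them works.

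Your argument is instead an explicit diagonalization: you enumerate the ``bad'' definable sets and build a transversal $A = \{a_k\}$ that dodges each one, then finish with a direct case analysis on $\dim Z$ for a putative pseudo-closure $Z$. This is more hands-on and yields a concrete witness rather than a mere existence statement; the paper's approach is softer and avoids the case split, at the cost of being non-constructive. Either way the essential content --- that countably many definable sets cannot serve as pseudo-closures for all the configurations one can carve out of the $X_n$ --- is the same.
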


\begin{proof}
Suppose $T^\diamondsuit$ is not $\aleph_0$-stable.
Applying Remark~\ref{rem:RM1}, we obtain a $T^\diamondsuit$-model $\sM^\diamondsuit$, an $\sM^\diamondsuit$-definable set $X$ with $\dim^\diamondsuit X = \alpha$, and a sequence  $(X_n)_{n \in \mathbb{N}}$ of pairwise disjoint $\sM^\diamondsuit$-definable  subsets of $X$ such that $\dim^\diamondsuit X_n = \alpha$ for all $n$. Since $X$ and each $X_n$ are definable with parameters from a countable elementary submodel, we may assume $\sM^\diamondsuit$ is countable. 

Given $S \subseteq \mathbb{N}$, let $A_S = \bigcup_{n \in S} X_n$.
We show that $A_S$ does not have a pseudo-closure for uncountably many $S \subseteq \mathbb{N}$.
Suppose $S \subseteq \mathbb{N}$ is nonempty and $X'$ is a pseudo-closure of $A_S$.
As $A_S \subseteq X$, we have $\dim^\diamondsuit X' \leq \alpha$.
As $S$ is nonempty, we have $X_n \subseteq X'$ for some $n$, so $\dim^\diamondsuit X' \geq \alpha$.
Thus any pseudo-closure $X'$ of $A_S$ has $\dim^\diamondsuit X' = \alpha$.

Now suppose $S,S' \subseteq \mathbb{N}$ are nonempty and $S \not\subseteq S'$.
We show any pseudo-closure of $A_{S}$ is not a pseudo-closure of $A_{S'}$.
Fix $n \in S \setminus S'$ and suppose $X'$ is a pseudo-closure of $A_S$.
Then $\dim^\diamondsuit X' = \alpha$, $X_n$ is an $\sM^\diamondsuit$-definable subset of $X'$ with $\dim^\diamondsuit X_n = \alpha$,  but $X_n$ is disjoint from $A_{S'}$.
Thus $X'$ is not a pseudo-closure of $A_{S'}$.

Let $\mathfrak{J}$ be an uncountable collection of nonempty subsets of $\mathbb{N}$ such that $S \not\subseteq S'$ for all distinct $S,S' \in \mathfrak{J}$.
If $S,S' \in \mathfrak{J}$ are distinct, then $A_S$ and $A_{S'}$ cannot have a common pseudo-closure.
As $\sM^\diamondsuit$ and $L$ are countable, there are only countably many $\sM^\diamondsuit$-definable sets,  so there are uncountably many $S \in \mathfrak{J}$ such that $A_S$ does not have a pseudo-closure.
The expansion of $\sM^\diamondsuit$ by a predicate defining any such $A_S$ does not have pseudo-closures in $\sM^\diamondsuit$.
It follows that the theory of this expansion does not have pseudo-closures in $T^\diamondsuit$.
\end{proof}

\noindent We next give a useful characterization of definability of pseudo-denseness over an $\aleph_0$-stable theory. Proposition \ref{prop:induced-1} motivates the following definition. 
Suppose $M'$ is a model of $T'$, $\sM  = \sM' | L$, and $X' \subseteq M^x$ is $\sM'$-definable. Define
 $$ \dim' X' = \dim X \quad \text{and}\quad \mult' X' = \mult X $$
 where $X$ is any pseudo-closure of $X'$.
Lemma~\ref{lem:induced-2} is an immediate consequence of Proposition \ref{prop:induced-1}.
 
\begin{lem}\label{lem:induced-2}
For $A \subseteq M^x$ and $\sM$-definable $X \subseteq M^x$, we have the following:
\begin{enumerate}
   \item $A$ is pseudo-dense in $X$ if and only if we have both $\dim'( X \cap A) = \dim(X)$ and $\mult'(X \cap A) = \mult(X)$.
\item If $X$ is almost irreducible, then $A$ is pseudo-dense in $X$ if and only if $\dim'( X \cap A) = \dim(X)$.
\end{enumerate}
\end{lem}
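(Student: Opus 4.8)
The plan is to derive both statements directly from the lexicographic characterization of pseudo-closures in Lemma~\ref{lem:induced-1}, after two preliminary observations. First, $\dim'$ and $\mult'$ extend unambiguously to an arbitrary $B\subseteq M^x$ by setting $(\dim' B,\mult' B)=(\dim Y,\mult Y)$ for $Y$ a pseudo-closure of $B$: such a $Y$ exists by Proposition~\ref{prop:omega-stable-pseudo}, and by Lemma~\ref{lem:induced-1} the pair $(\dim Y,\mult Y)$ is exactly the lexicographic minimum of $(\dim Z,\mult Z)$ over all $\sM$-definable $Z\supseteq B$, hence independent of the choice of $Y$. Second, pseudo-density in $X$ only involves intersections with $\sM$-definable subsets $X'$ of $X$, and $B\cap X' = (B\cap X)\cap X'$ whenever $X'\subseteq X$; therefore $A$ is pseudo-dense in $X$ if and only if $A\cap X$ is pseudo-dense in $X$.

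For (1), I would chain these equivalences: $A$ is pseudo-dense in $X$ iff $A\cap X$ is pseudo-dense in $X$; since $A\cap X\subseteq X$, the latter holds iff $X$ is a pseudo-closure of $A\cap X$; and by Lemma~\ref{lem:induced-1} this holds iff $(\dim X,\mult X)$ is the lexicographic minimum of $(\dim Z,\mult Z)$ over $\sM$-definable $Z\supseteq A\cap X$, which by the first observation is precisely the assertion that $(\dim X,\mult X)=(\dim'(A\cap X),\mult'(A\cap X))$.

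For (2), the forward direction is immediate from (1). For the converse, suppose $X$ is almost irreducible and $\dim'(A\cap X)=\dim X =: d$. Choose a pseudo-closure $Y$ of $A\cap X$; replacing $Y$ by $Y\cap X$, which is still a pseudo-closure of $A\cap X$ by Lemma~\ref{lem: basicfacts}$(6)$, I may assume $Y\subseteq X$, so $\dim Y = d$. Write $X = Y\cup(X\setminus Y)$. Almost irreducibility forces $X$ to be almost equal to $Y$ or to $X\setminus Y$; the second option would give $\dim Y = \dim\bigl(X\triangle(X\setminus Y)\bigr) < \dim X = d$, a contradiction, so $X$ is almost equal to $Y$. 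Almost equal $\sM$-definable sets have the same Morley degree, so $\mult X = \mult Y = \mult'(A\cap X)$, and part (1) then yields that $A$ is pseudo-dense in $X$.

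The argument is essentially bookkeeping around Lemma~\ref{lem:induced-1}, so there is no serious obstacle; the only step needing a little care is the almost-irreducibility step in (2) — ruling out ``$X$ almost equal to $X\setminus Y$'' and transferring equality of Morley degree across an almost-equality — both of which are standard facts about Morley rank and degree in $\aleph_0$-stable theories.
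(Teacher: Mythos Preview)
Your proof is correct and follows exactly the route the paper intends: the paper simply declares the lemma an ``immediate consequence of Lemma~\ref{lem:induced-1}'' without writing out the details, and what you have written is precisely the natural unpacking of that claim. Your two preliminary observations (well-definedness of $\dim',\mult'$ on arbitrary subsets via the lexicographic characterization, and the reduction from $A$ to $A\cap X$) are exactly the bookkeeping needed, and your handling of the almost-irreducible case in (2) is the standard argument.
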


\noindent In general  $\dim'$ might not be an ordinal dimension on $T'$, as $\dim'(X')$ might be different from $\dim'( X'( \sN'))$ where $\sN'$ is an elementary extension of $\sM'$. When $T$ defines Morley rank, we can easily check that $\dim'$ is an ordinal dimension on  $T'$, which we will refer to as the {\bf induced dimension} on $T'$.

 \medskip \noindent We say  {\bf $T$ defines multiplicity} (also known as having the DMP in the literature) if for all $L$-formulas $\varphi(x,y)$, ordinals $\alpha$, and $n$, there is an $L$-formula $\mu_{\alpha,n}(y)$ such that for all $\sM \models T$ and $b \in M^y$ we have that 
$$ \sM \models \mu_{\alpha,n}(b) \text{ if and only if } \dim \varphi(\sM,b) = \alpha  \text{ and }  \mult\  \varphi(\sM, b) =n. $$
In particular, if $T$ defines multiplicity, then $T$ defines Morley rank, and the induced dimension on $T'$ is well-defined.

\begin{prop} \label{prop: Defininginduceddimension}
Suppose $T$ defines multiplicity. Then $T'$ defines pseudo-denseness over $T$ if and only if  $T'$ defines induced dimension.
\end{prop}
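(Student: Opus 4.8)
The plan is to prove the two implications separately, using Lemma~\ref{lem:induced-2} as the dictionary between pseudo-denseness and the induced rank, and leaning on the DMP (together with its consequence that $T$ defines Morley rank) to supply all the uniformity in families. Two standing facts will be used freely: for a fixed $L$-formula $\varphi(x,y)$ only finitely many values of $\dim\varphi(\sM,b)$ and $\mult\varphi(\sM,b)$ occur over models of $T$ (the sets $\{b:\dim\varphi(\sM,b)=\beta,\ \mult\varphi(\sM,b)=n\}$ are $L$-definable by the DMP, pairwise disjoint and covering, so only finitely many are nonempty), and every $\sM$-definable set has a pseudo-closure by Proposition~\ref{prop:omega-stable-pseudo}.

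\emph{Induced rank definable $\Rightarrow$ pseudo-denseness definable.} Fix an $L$-formula $\varphi(x,y)$ and an $L'$-formula $\varphi'(x,z)$. Using the DMP I would first split the family $(\varphi(\sM,b))_b$ uniformly into almost irreducible pieces: there are $L$-formulas $\varphi_1(x,y),\dots,\varphi_M(x,y)$ such that for every $\sM\models T$ and every $b$, each $\varphi_i(\sM,b)$ with $\dim\varphi_i(\sM,b)=\dim\varphi(\sM,b)$ is almost irreducible (which, since $T$ defines Morley rank, coincides with having Morley degree one), and the union of these top-dimensional pieces is almost equal to $\varphi(\sM,b)$. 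By Lemma~\ref{lem: basicfacts}(3), $\varphi'(\sM',c)$ is pseudo-dense in $\varphi(\sM,b)$ iff it is pseudo-dense in each such $\varphi_i(\sM,b)$, and by Lemma~\ref{lem:induced-2}(2) this says exactly $\dim'\bigl(\varphi_i(\sM,b)\cap\varphi'(\sM',c)\bigr)=\dim\varphi_i(\sM,b)$. Since $\varphi_i(\sM,b)\cap\varphi'(\sM',c)$ is defined by the $L'$-formula $\varphi_i(x,y)\wedge\varphi'(x,z)$, the hypothesis gives an $L'$-formula detecting ``$\dim'$ of it equals $\beta$'', while the DMP gives $L$-formulas detecting ``$\dim\varphi_i(\sM,b)=\beta$'' and ``$\dim\varphi(\sM,b)=\beta$''. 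Assembling these finitely many ingredients over the finitely many values $\beta$ that occur produces the required $L'$-formula $\delta'(y,z)$.

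\emph{Pseudo-denseness definable $\Rightarrow$ induced rank definable.} Fix an $L'$-formula $\varphi'(x,z)$. The crux is the characterization: $\dim'\varphi'(\sM',c)=\beta$ iff there are an $L$-formula $\psi(x,w)$ and a tuple $d$ with $\dim\psi(\sM,d)=\beta$, $\varphi'(\sM',c)\subseteq\psi(\sM,d)$, and $\varphi'(\sM',c)$ pseudo-dense in $\psi(\sM,d)$. For ``$\Leftarrow$'': if $\varphi'(\sM',c)$ is pseudo-dense in an $\sM$-definable superset $X$, then $\dim X=\dim'\varphi'(\sM',c)$, since otherwise a pseudo-closure $Y$ of $\varphi'(\sM',c)$ has $\dim Y<\dim X$, so $\dim(X\setminus Y)=\dim X$ while $\varphi'(\sM',c)\cap(X\setminus Y)=\emptyset$, contradicting pseudo-density. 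For ``$\Rightarrow$'': a pseudo-closure of $\varphi'(\sM',c)$ is $\sM$-definable, hence an instance of some $L$-formula, and works. Now for each fixed $L$-formula $\psi$, the set of $c$ witnessed by $\psi$ — those admitting $d$ with $\dim\psi(\sM,d)=\beta$ ($L$-definable in $d$ by the DMP), $\varphi'(\sM',c)\subseteq\psi(\sM,d)$ (first-order in $c,d$), and $\varphi'(\sM',c)$ pseudo-dense in $\psi(\sM,d)$ ($L'$-definable in $c,d$ by hypothesis) — is $L'$-definable in $c$. Thus ``$\dim'\varphi'(\sM',c)=\beta$'' is a union of $L'$-definable sets, one per $L$-formula $\psi$; these sets, ranging over all $\beta$ and all $\psi$, form a clopen cover of the space of complete $L'$-types in $z$ consistent with $T'$. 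By compactness the cover has a finite subcover, so only finitely many $\beta$ occur and, because the sets ``$\dim'\varphi'(\sM',c)=\beta$'' are pairwise disjoint, each is a finite union of $L'$-definable sets, hence $L'$-definable.

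The main obstacle is the first implication, specifically the uniform decomposition of an $L$-definable family into Morley-degree-one pieces: this is exactly what forces the hypothesis to be the DMP rather than mere definability of Morley rank, and making it precise in the abstract $\aleph_0$-stable setting — rather than via irreducible components of varieties as in the $\mathrm{ACF}$ case treated earlier in Section~\ref{section:aleph-base} — is the technically delicate point. The second implication, by contrast, is a soft compactness argument once the characterization of $\dim'$ above is in hand, and uses only definability of Morley rank together with the pseudo-denseness oracle.
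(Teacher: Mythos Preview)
Your proof is correct and follows the same two-step strategy as the paper: reduce to almost irreducible sets via Lemma~\ref{lem:induced-2}, and use compactness to collapse quantification over $L$-formulas.

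For ``pseudo-denseness $\Rightarrow$ induced rank'', your argument is the paper's: the paper phrases compactness as producing a single $L$-formula whose instances contain a pseudo-closure of every $\varphi'(\sM',c)$, which is equivalent to your finite subcover of the type space.

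For ``induced rank $\Rightarrow$ pseudo-denseness'', the paper sidesteps your uniform-decomposition worry by invoking the pseudo-cell machinery already set up in Section~\ref{subsection:pcp}: the almost irreducible sets form a pseudo-cell collection $\sC$, the DMP gives definability of $\sC$-membership and of dimension, and Proposition~\ref{Prop: Definingpdoverpc} then reduces the problem to defining pseudo-denseness over $\sC$, which is immediate from Lemma~\ref{lem:induced-2}(2) plus definability of induced rank. Your direct route via a uniform decomposition into degree-one pieces is also correct---it is itself a compactness argument, since for each fixed finite tuple of $L$-formulas $(\psi_1,\dots,\psi_n)$ the set of $b$ admitting a degree-one decomposition of $\varphi(\sM,b)$ of that shape is $L$-definable by the DMP, and these sets cover the parameter space---but the ``technically delicate point'' you flag is exactly what the paper absorbs into the omitted ``straightforward'' proof of Proposition~\ref{Prop: Definingpdoverpc}. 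So nothing is missing from your argument; you have simply unfolded what the paper modularizes.
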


\begin{proof}
Suppose $T'$ defines pseudo-denseness and $\varphi'(x,y)$ is an $L'$-formula.
Let  $(X'_{b})_{b\in M^y}$ be the family of subsets of $M^x$ defined by $\varphi'(x,y)$. 
Using the assumption that $T'$ defines pseudo-denseness and a standard compactness argument, we obtain a family $(X_{c})_{c\in M^z}$ defined by a formula whose choice might depend on $\varphi'(x,y)$ but not on $\sM'$, such that for every $b \in M^y$, $X'_b$ has a pseudo-closure that is a member of the family $(X_{c})_{c\in M^z}$. It follows from Proposition~\ref{prop:induced-1} that $\dim'(X'_{b}) = \alpha$ for $b \in M^y$ if and only there is $c \in M^z$ such that $X'_b$ is pseudo-dense in $X_c$ and $\dim(X_c)= \alpha$. As $T$ defines Morley rank and $T'$ defines pseudo-denseness, it follows that $T'$ defines induced dimension.

Now suppose $T'$ defines induced dimension. Let $\sC$ be the collection of almost irreducible subsets of $T$-models. Then $\sC$ is a collection of pseudo-cells for $T$. As $T$ defines multiplicity, $T$ defines $\sC$-membership.
 So by Proposition \ref{Prop: Definingpdoverpc}, it suffices to show $T'$ defines pseudo-denseness over $\sC$.  Let $(X'_{b})_{b\in M^y}$  and $(X_{c})_{c\in M^z}$  be a families defined by an $L'$-formula  $\varphi'(x,y)$ and an $L$-formula $\varphi(x,z)$. It follows from Proposition \ref{prop:induced-1} that when $X_c $ is in $\sC$, $X'_{b}$ is pseudo-dense in $X_c$ if and only if $\dim'(X \cap  X') = \dim(X)$. The desired conclusion follows.
\end{proof}

\begin{rem}
If $T$ defines Morley rank, then $\mult'$ is preserved under elementary extensions, so we may speak of induced multiplicity on $T'$. 
There is also an analogue of Proposition~\ref{prop: Defininginduceddimension} that involves both $\dim'$ and $\mult'$: If $T$ defines Morley rank, then $T'$  defines pseudo-denseness if and only if $T'$ defines induced dimension and induced multiplicity.
We do not include it here as we do not have an application in mind.
\end{rem}

\noindent We get the main result of this section, which is a restatement of Theorem~\ref{thm:3}: 

\begin{thm}\label{thm:psd2}
Suppose $T_\cap$ is $\aleph_0$-stable and defines multiplicity. 
If each $T_i$ defines induced dimension, then $T^{*}_\cup$ exists.
\end{thm}

\begin{proof}
This is an immediate consequence of Theorem~\ref{thm:approxinterp},  Proposition~\ref{prop:induced-1}, and Proposition~\ref{prop: Defininginduceddimension}.
\end{proof}

\begin{rem} Proposition~\ref{prop: Defininginduceddimension} and Theorem~\ref{thm:psd2} are mainly of interest because there are several situations where the natural dimension is induced dimension. Proposition~\ref{prop:induceddimandeliminateexisitinfty} below presents a general class of such situations.
In forthcoming work of the third author and Aschenbrenner it will be shown that $T'$ defines induced dimension when $T$ is $\mathrm{ACF}_0$ and $T'$ is either $\mathrm{DCF}_0$ or the theory of $(\mathbb{C};\mathbb{R})$ or $(\mathbb{C}_p;\mathbb{Q}_p)$.
\end{rem}

 \noindent The {\bf algebraic dimension} $\mathrm{adim}(X)$ of an $\sM$-definable set $X$ is the maximal $k$ for which there is $a = (a_1, \ldots, a_k)$ in the extension $X(\monster)$ of $X$ to an $\aleph_0$-saturated model $\monster$ such that (after permuting coordinates) $a_1,\ldots,a_k$ are $\text{acl}$-independent over $M$. 
It is well-known that algebraic dimension is an ordinal dimension on $\Th(\sM)$, which coincides with Morley rank for strongly minimal theories.
Fact~\ref{fact:acldim} is also well known (see~\cite[Lemma 2.2]{Cha-Pi}). 

\begin{fact} \label{fact:acldim}
A theory defines algebraic dimension if and only if it eliminates $\exists^\infty$.
\end{fact}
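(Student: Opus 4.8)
The plan is to prove both implications by reducing to the one-variable case, where algebraic dimension is simply the finite/infinite dichotomy for definable sets. Throughout I will use the elementary observation that if $X \subseteq M$ is $\bar b$-definable and infinite, then $X(\monster)$ contains an element outside $\acl(\bar b)$: otherwise, by compactness, $\varphi(x,\bar b) \models \bigvee_{i=1}^{n} x = c_i$ for some $c_i \in \acl(\bar b)$, forcing $X$ to be finite. In particular, for a $\bar b$-definable $X \subseteq M$ (single variable $x$) we have $\mathrm{adim}(X) = 1$ iff $X$ is infinite. Since algebraic dimension is invariant under elementary extension, there is no harm in computing inside a monster model; one can treat $T$ completion by completion if desired.

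For the forward direction, assume $T$ defines algebraic dimension, and fix $\varphi(x,\bar y)$ with $x$ a single variable. By the observation, the formula $\delta_1(\bar y)$ witnessing definability of $\{\bar b : \mathrm{adim}(\varphi(\sM,\bar b)) = 1\}$ in fact defines $\{\bar b : \varphi(\sM,\bar b)\text{ infinite}\}$. A routine compactness argument then shows $\neg\delta_1(\bar b) \models \exists^{\leq k} x\, \varphi(x,\bar b)$ for some fixed $k$ (else $\{\neg\delta_1(\bar b)\} \cup \{\exists^{\geq m} x\,\varphi(x,\bar b) : m\in\nn\}$ would be consistent, producing an infinite $\varphi(\sM,\bar b)$ with $\neg\delta_1(\bar b)$). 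Hence $|\varphi(\sM,\bar b)| > k$ implies $\varphi(\sM,\bar b)$ is infinite, which is elimination of $\exists^\infty$ for $\varphi$.

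For the converse, assume $T$ eliminates $\exists^\infty$. The main step is to prove by induction on $k$ that for every formula $\psi(x_1,\dots,x_k,\bar y)$ the set $\{\bar b : \mathrm{adim}(\psi(\sM,\bar b)) = k\}$ is definable. The case $k=1$ is exactly elimination of $\exists^\infty$ together with the observation. For the step, write $\psi^{(1)}(x_2,\dots,x_k\,;\,x_1,\bar y)$ for $\psi$ with $x_1$ moved into the parameters; the induction hypothesis at $k-1$ supplies a formula $\theta(z,\bar y)$ defining $\{(c,\bar b) : \mathrm{adim}(\psi^{(1)}(\sM,c,\bar b)) = k-1\}$. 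The crucial claim is that $\mathrm{adim}(\psi(\sM,\bar b)) = k$ iff $\theta(\sM,\bar b)$ is infinite: given an $\acl$-independent tuple $(a_1,\dots,a_k)$ over $\bar b$, one has $a_1 \notin \acl(\bar b)$, so all of the infinitely many $\bar b$-conjugates of $a_1$ lie in $\theta(\monster,\bar b)$ (by an automorphism fixing $\bar b$ and the fact that $(a_2,\dots,a_k)$ witnesses $\mathrm{adim}$ of the fibre $k-1$); conversely, if $\theta(\sM,\bar b)$ is infinite, the observation gives $c_1 \in \theta(\monster,\bar b)$ with $c_1 \notin \acl(\bar b)$, and prepending $c_1$ to an $\acl$-independent witness in the fibre $\psi^{(1)}(\monster,c_1,\bar b)$ yields $\mathrm{adim}(\psi(\sM,\bar b)) \geq k$. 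Applying elimination of $\exists^\infty$ to $\theta$ makes infinitude of $\theta(\sM,\bar b)$ a definable condition, closing the induction. Finally, for an arbitrary $\varphi(\bar x,\bar y)$ with $|\bar x| = n$ and $j \leq n$, one has $\mathrm{adim}(\varphi(\sM,\bar b)) \geq j$ iff $\mathrm{adim}\big(\pi_S(\varphi(\sM,\bar b))\big) = j$ for some coordinate subset $S$ of size $j$, because an independent witness for $\varphi$ projects to one for $\pi_S\varphi$ and conversely any independent witness in $\pi_S\varphi$ lifts; since projections are definable families of subsets of $M^j$, the full-dimensional case already proved shows each such set is definable, and a finite disjunction over $S$ (together with $\{\mathrm{adim} = \alpha\} = \{\mathrm{adim} \geq \alpha\}\setminus\{\mathrm{adim}\geq\alpha+1\}$) finishes. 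The main obstacle is the equivalence in the inductive step, i.e.\ making the fibre-wise independence interact correctly with conjugation over $\bar b$ and with the passage between a model and its monster; the rest is bookkeeping with compactness and basic properties of $\acl$.
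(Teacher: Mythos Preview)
Your proof is correct. The paper does not actually give its own argument for this fact; it is stated as well known and attributed to \cite[Lemma 2.2]{Cha-Pi}. Your approach---reducing the forward direction to the one-variable case (where $\mathrm{adim}=1$ coincides with infinitude) and then extracting a uniform bound by compactness, and handling the converse by an induction on the number of free variables via fibre dimensions followed by projections for the non-full-dimensional case---is the standard one and is essentially what appears in the cited reference.

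One minor remark: the paper phrases $\mathrm{adim}$ as independence over the model $N$ rather than over the parameter tuple $\bar b$, while your argument works over $\bar b$. These agree (this is implicit in the paper's assertion that $\mathrm{adim}$ is an ordinal rank on the theory, hence invariant under elementary extension), so there is no genuine gap, but you might add a sentence noting the equivalence if you want the write-up to match the paper's conventions exactly.
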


\begin{prop} \label{prop:induceddimandeliminateexisitinfty}
Suppose $T$ is strongly minimal and $\acl'$ agrees with $\acl$ in all $T'$-models. Then $T'$ defines induced dimension if and only if $T'$ eliminates $\exists^\infty$.
\end{prop}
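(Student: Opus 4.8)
The plan is to show the two implications separately, using the hypotheses to identify the induced rank $\dim'$ with a more tractable notion. The key observation is that when $T$ is strongly minimal, Morley rank on $T$-models coincides with algebraic dimension $\mathrm{adim}$, and so for an $\sM'$-definable set $X' \subseteq M^x$, a pseudo-closure $X$ of $X'$ (which exists by Proposition~\ref{prop:omega-stable-pseudo}, since strongly minimal theories are $\aleph_0$-stable) has $\dim X = \mathrm{adim} X$. The bridge between $\dim' X'$ and $\mathrm{adim}' X'$ (algebraic dimension computed in $T'$-models, using $\acl'$) will come from the hypothesis that $\acl' = \acl$ in all $T'$-models: since $X' \subseteq X$ and $X$ is pseudo-dense in $X'$, I claim $\dim' X' = \mathrm{adim}' X'$, i.e., the Morley rank of a pseudo-closure equals the algebraic dimension of $X'$ itself.

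First I would establish this claim about ranks. For the inequality $\mathrm{adim}'(X') \leq \dim'(X')$: if $a = (a_1,\dots,a_n) \in X'(\monster')$ with $a_1,\dots,a_k$ $\acl'$-independent over the relevant parameters, then since $\acl' = \acl$, these elements are $\acl$-independent, hence $\mathrm{adim}(X) = \dim(X) \geq k$ because $X' \subseteq X$ (working in a monster model of $T'$ and its $L$-reduct). Taking the max gives $\mathrm{adim}'(X') \leq \dim'(X')$. For the reverse, I use pseudo-density: in a monster model, if $\dim X = \alpha$ then (in a strongly minimal $T$) $X$ contains a point of algebraic dimension $\alpha$ over the parameters, and more is true — the set of points of $X$ of algebraic dimension $< \alpha$ is a proper definable subset $Y \subseteq X$ with $\dim Y < \dim X = \dim X$, so $Y$ does not witness pseudo-density; since $X'$ is pseudo-dense in $X$, $X'$ must meet $X \setminus Y$, i.e., $X'$ contains a point of algebraic dimension $\alpha$ over the parameters, so $\mathrm{adim}'(X') \geq \alpha = \dim'(X')$. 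Hence $\dim' = \mathrm{adim}'$ as ordinal ranks on $T'$.

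Given the identification $\dim' = \mathrm{adim}'$, the proposition reduces to: $T'$ defines $\mathrm{adim}'$ if and only if $T'$ eliminates $\exists^\infty$. But this is precisely Fact~\ref{fact:acldim} applied to the theory $T'$ (noting $\mathrm{adim}'$ is algebraic dimension in the sense of that fact, computed with $\acl' = \acl$). So the forward direction ($T'$ defines induced rank $\Rightarrow$ $T'$ eliminates $\exists^\infty$) and backward direction both follow once the rank identification is in place. I should be slightly careful that ``defines induced rank'' as stated means there is an $L'$-formula $\delta_\alpha$ for each ordinal $\alpha$ picking out the fibers of induced rank $\alpha$ in a definable family; since $\dim'$ takes only finitely many values (bounded by the arity) on a given family — as $T$ is strongly minimal and $\mathrm{adim}$ is finite-valued — this matches the finite-rank formulation of $\exists^\infty$-elimination cleanly, and no transfinite subtleties arise.

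The main obstacle will be the reverse rank inequality $\dim'(X') \leq \mathrm{adim}'(X')$, i.e., verifying that pseudo-density of $X'$ in its pseudo-closure $X$ forces $X'$ to contain a point realizing the full algebraic dimension of $X$. This requires knowing that in a strongly minimal theory the locus $\{p \in X : \mathrm{adim}(p/\text{params}) < \mathrm{adim}(X)\}$ is definable of strictly smaller Morley rank — which is a standard consequence of definability of Morley rank in strongly minimal theories (every definable set has finite Morley rank equal to algebraic dimension, and the ``smaller-dimensional part'' is definable and has smaller rank) — together with the fact that this must be computed in a sufficiently saturated model so that a generic point of $X$ over the parameters actually exists, and then transferring back via the invariance of $\dim$ under elementary extension. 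I would state this as a short lemma before the main argument to keep the proof of the proposition clean.
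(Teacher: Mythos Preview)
Your overall strategy matches the paper's exactly: identify the induced rank $\dim'$ with the algebraic dimension $\mathrm{adim}'$ (computed via $\acl' = \acl$) and then invoke Fact~\ref{fact:acldim}. Your argument for the inequality $\mathrm{adim}'(X') \leq \dim'(X')$ is also essentially the paper's.

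The gap is in the reverse inequality. Your proposed lemma --- that the locus $\{p \in X : \mathrm{adim}(p/\text{params}) < \mathrm{adim}(X)\}$ is a single definable set of smaller Morley rank --- is false. Definability of Morley rank says that, in a definable family, the parameter set giving fibers of a fixed rank is definable; it does \emph{not} say that inside a fixed definable set $X$ the points of submaximal algebraic dimension over a given base form a definable set. Concretely, in $\mathrm{ACF}$ with $X = K^2$ over $\emptyset$, this locus is the union of all $\emptyset$-definable curves, which is a countable union of one-dimensional sets and not itself definable. So you cannot produce a single $Y$ with $\dim Y < \dim X$ whose complement in $X$ consists of generic points, and hence pseudo-density of $X'$ in $X$ does not directly hand you a generic point of $X$ lying in $X'$.

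The paper fixes this with a compactness argument rather than a definability claim: since $X$ is a pseudo-closure of $X'$, the set $X'$ is not contained in any $\sM$-definable set of dimension $< n$; as such sets are closed under finite union, compactness (in a saturated $\monster'$) yields $a' \in X'(\monster')$ avoiding every $\sM$-definable set of dimension $< n$. If $a'$ failed to have $n$ components $\acl$-independent over $M$, that dependence would be witnessed by membership in some $\sM$-definable $Y$ with $\mathrm{adim}(Y) < n$, a contradiction. This is the step you should substitute for your lemma.
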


\begin{proof}
Suppose $\sM' \models T'$, and 
$\sM = \sM' | L$. Since $T$ is strongly minimal, $\dim = \text{adim}$. We write $\dim'$ for the induced dimension on $T'$ and $\text{adim}'$ for the algebraic dimension in  $\sM'$. Using Fact \ref{fact:acldim},  both directions of the equivalence will be proved if we show that $\dim' = \text{adim}'$. 

If $X'$ is an arbitrary $\sM'$-definable subset of $M^x$,
$$ \dim'(X') = \min \{ \text{adim}( X ) \mid X\subseteq M^x \text{ is } \sM\text{-definable, and }  X' \subseteq X\}. $$
As $\acl' = \acl$, whenever $a \in X'(\monster') $ has $k$ components that are $\acl'$-independent over $M$, these components are also $\acl$-independent over $M$, and we have $a\in X(\monster')$ for any $\sM$-definable $X$ such that $X'\subseteq X$. 
Hence, $\text{adim}'(X') \leq \dim'(X')$.  

Conversely, let $X \subseteq M^x$ be a pseudo-closure of $X'$, and $k = \text{adim}(X)$. Then $X'$ is not contained in any $\sM$-definable set of smaller dimension. Since the set of $\sM$-definable sets of dimension less than $k$ is closed under finite unions, by compactness there is some $a'\in X'(\monster')$ that is not contained in any $\sM$-definable set of dimension less than $k$. If $a'$ does not have $k$ components that are $\acl'$-independent over $M$, then since $\acl' = \acl$, this dependence is witnessed by $a'\in Y$, where $Y$ is $\sM$-definable and $\text{adim}(Y) < k$. This contradicts the choice of $a'$. 
\end{proof}

\noindent When $T$ is the theory of algebraically closed fields and $T'$ is the theory of algebraically closed valued fields, $T'$ eliminates $\exists^\infty$ and $\acl'$ agrees with $\acl$ in all $T'$-models; see \cite{lou-dimension} for details. Thus, using Proposition~\ref{prop:induceddimandeliminateexisitinfty} and Theorem~\ref{thm:psd2}, we obtain a new proof of the existence of a model companion for the theory of algebraically closed fields with multiple valuations.

\section{$\aleph_0$-categorical and $\aleph_0$-stable base}\label{sec:stable-categorical}
\noindent In this section, we generalize Winkler's result~\cite{Winkler} on model companions of disjoint unions of theories to allow $T_\cap$ to be any complete $\aleph_0$-stable and $\aleph_0$-categorical theory with weak elimination of imaginaries. Our generalization applies to several other examples from Section~\ref{sec:examples}, such as the random graph and other combinatorial Fra\"iss\'e limits, and generic Skolemizations.  

\medskip\noindent Throughout, we keep the additional notational conventions described at the beginning of Section~\ref{sec: Pseudo-topological base} and further assume that $T$ is $\aleph_0$-categorical with only infinite models and $\aleph_0$-stable. We write $\dim$ for Morley rank on $T$ and $\mult$ for Morley degree on $T$. 

\medskip \noindent  The $\aleph_0$-stable assumption allows us to make extensive use of Proposition~\ref{prop:induced-1}, which ensures that every subset of a model of $T$ has a pseudo-closure.
It also provides us  with the following ``inductive'' procedure to check whether a subset is pseudo-dense in an almost irreducible set. Let $\sM$ be a model of $T$. Recall from Section~\ref{section:aleph-base} that $\subseteq_\text{a}$ denotes the almost subset relation, and $=_{\text{a}}$ denotes the almost equality relation.
A collection  $\sD$ of almost irreducible subsets of $M^x$ in $\sM \models T$ is {\bf representative} if $\sD$ contains a (not necessarily unique) representative for each almost equality class. 

\begin{lem}\label{lem:wink-1} 
Suppose $X \subseteq M^x$ is almost irreducible, $\sD$ is a  representative collection of almost irreducible subsets of $M^x$, and $A$ is a subset of $M^x$.
For $\alpha < \dim X$, let  $\sD_\alpha(A, X) \subseteq \sD$  consist of all $Y \in \sD$ such that:
$$ \dim Y = \alpha,\  A \text{ is pseudo-dense in } Y, \text{ and }   Y \subseteq_{\text{a}} X. $$
If $\sD_\beta(A, X) = \emptyset$ for all $\alpha< \beta < \dim X$, then:
\begin{enumerate}
        \item If  $\sD_\alpha(A, X)$ is infinite up to almost equality, then $A$ is pseudo-dense in $X$.
        \item If $\sD_\alpha(A,X)$ is finite up to almost equality, $X_\alpha^1, \ldots, X_\alpha^n$ are representatives of the almost equality classes, and
         $$ A' = A \setminus \bigcup_{i=1}^n X_\alpha^i,$$
         then $\sD_\beta(A', X) = \emptyset$ for all  $\alpha\leq  \beta < \dim X$, and $A$ is pseudo-dense in $X$ if and only if $A'$ is. 
\end{enumerate}
\end{lem}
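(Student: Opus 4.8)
The plan is to translate pseudo-density into the language of the induced rank. Write $\dim'(S)$ for the minimal dimension of an $\sM$-definable set containing $S$; by Proposition~\ref{prop:omega-stable-pseudo} and Lemma~\ref{lem: basicfacts} this equals the dimension of any pseudo-closure of $S$, and by Lemma~\ref{lem:induced-2}(2) we have, for any almost irreducible $\sM$-definable $Y$ and any $B\subseteq M^x$, that $B$ is pseudo-dense in $Y$ if and only if $\dim'(B\cap Y)=\dim Y$. Two elementary facts will be used repeatedly. First, if $Y$ is almost irreducible and $Y''\subseteq Y$ is $\sM$-definable with $\dim Y''=\dim Y$, then $\dim(Y\setminus Y'')<\dim Y$ (else $Y$ is almost equal to $Y\setminus Y''$, forcing $\dim Y''<\dim Y$); consequently $B$ is pseudo-dense in $Y$ iff $B\cap Y$ lies in no $\sM$-definable set of dimension $<\dim Y$. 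Second, $\dim'$ takes maxima over finite unions and is unchanged by removing a subset of strictly smaller $\dim'$. I will also use the standard consequence of $\aleph_0$-stability that every $\sM$-definable set is a finite union of almost irreducible $\sM$-definable sets of no larger dimension, together with the fact that if $W\subseteq X$ is almost irreducible and $W'\in\sD$ is almost equal to $W$, then $W'$ is almost a subset of $X$ and $B$ is pseudo-dense in $W$ iff $B$ is pseudo-dense in $W'$.

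Set $d=\dim X$. I prove (1) in contrapositive form: assuming $\sD_\beta(A,X)=\emptyset$ for all $\alpha<\beta<d$ and that $A$ is not pseudo-dense in $X$, I show $\sD_\alpha(A,X)$ is finite up to almost equality. By the first fact above, $A\cap X\subseteq Z$ for some $\sM$-definable $Z\subseteq X$ with $\dim Z<d$. The heart of the argument is an iteration. Decompose $Z$ into almost irreducible components; for any component $W$ with $\alpha<\dim W$ (so $\alpha<\dim W<d$), the hypothesis $\sD_{\dim W}(A,X)=\emptyset$, applied to the element of $\sD$ almost equal to $W$ (which is almost a subset of $X$), forces $A$ not pseudo-dense in $W$, hence $A\cap W$ lies in an $\sM$-definable subset of $W$ of dimension $<\dim W$; replace $W$ by this subset and re-decompose. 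Each replacement strictly lowers the (well-founded, by well-foundedness of the ordinals) multiset of component-dimensions exceeding $\alpha$, so after finitely many rounds we obtain an $\sM$-definable $Z'\subseteq X$ with $A\cap X\subseteq Z'$ and all almost irreducible components $V_1,\dots,V_s$ of $Z'$ of dimension $\le\alpha$. Now fix any $X_\alpha\in\sD_\alpha(A,X)$. Since $A$ is pseudo-dense in $X_\alpha$ and $X_\alpha$ is almost a subset of $X$, the second fact gives $\dim'(A\cap X_\alpha\cap X)=\alpha$; as $A\cap X_\alpha\cap X\subseteq\bigcup_j V_j$, some $V_j$ satisfies $\dim'(A\cap X_\alpha\cap X\cap V_j)=\alpha$, whence $\dim(X_\alpha\cap V_j)=\alpha=\dim X_\alpha=\dim V_j$, and almost irreducibility of both $X_\alpha$ and $V_j$ forces $X_\alpha$ almost equal to $V_j$. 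Thus $\sD_\alpha(A,X)$ has at most $s$ almost-equality classes.

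For (2), note first that $A$ and $A'$ differ only within $\bigcup_{i=1}^n X_\alpha^i$, an $\sM$-definable set of dimension $\le\alpha<d$, so using the two facts above (removing from or adjoining to $A$ a subset of an $\sM$-definable set of dimension $<d$ cannot change whether the result is pseudo-dense in $X$) we get at once that $A$ is pseudo-dense in $X$ if and only if $A'$ is. It remains to check $\sD_\beta(A',X)=\emptyset$ for $\alpha\le\beta<d$. Since $A'\subseteq A$, pseudo-density of $A'$ in a set implies that of $A$, so $\sD_\beta(A',X)\subseteq\sD_\beta(A,X)$, and the right side is empty for $\alpha<\beta<d$ by hypothesis. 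For $\beta=\alpha$: if $X_\alpha\in\sD_\alpha(A',X)$ then also $X_\alpha\in\sD_\alpha(A,X)$, so $X_\alpha$ is almost equal to some representative $X_\alpha^i$; but $A'\cap X_\alpha^i=\emptyset$ by construction, so $\dim'(A'\cap X_\alpha^i)=-\infty\neq\alpha$, and since $X_\alpha$ and $X_\alpha^i$ differ by a set of dimension $<\alpha$, Lemma~\ref{lem:induced-2}(2) is contradicted by pseudo-density of $A'$ in $X_\alpha$.

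The step I expect to be the main obstacle is the iteration in (1): carefully setting up the well-founded (multiset-on-ordinals) measure that decreases when the components of dimension exceeding $\alpha$ are shrunk, and keeping the bookkeeping straight that the shrunken sets stay inside $X$ (up to almost inclusion) so that the hypothesis $\sD_\beta(A,X)=\emptyset$ can be applied to the corresponding members of $\sD$. Everything else reduces to routine manipulations with Morley rank, almost irreducibility, and the induced rank already developed in Lemmas~\ref{lem: basicfacts}, \ref{lem:induced-1}, and \ref{lem:induced-2}.
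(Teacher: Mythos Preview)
Your proof is correct. Part (2) is essentially the paper's argument. For part (1) you take a genuinely different and more elaborate route: where you iterate, shrinking components of dimension exceeding $\alpha$ and tracking termination via a multiset-on-ordinals measure, the paper simply takes a pseudo-closure $Y\subseteq X$ of $A\cap X$ (available by Proposition~\ref{prop:omega-stable-pseudo}) and observes a dichotomy: either $\dim Y=\dim X$, in which case $A$ is pseudo-dense in $X$ by Lemma~\ref{lem:induced-1}, or $\dim Y\le\alpha$, since any intermediate value would force some almost irreducible component of $Y$ (in which $A\cap X$, hence $A$, is pseudo-dense) to witness $\sD_\beta(A,X)\neq\emptyset$ for some $\alpha<\beta<\dim X$. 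In the second case the finitely many $\alpha$-dimensional almost irreducible components of $Y$ bound the almost-equality classes of $\sD_\alpha(A,X)$ exactly as your $V_j$ do. Your iteration is in effect reconstructing by hand a set playing the role of $Y$; invoking the pseudo-closure directly absorbs all the well-foundedness bookkeeping into a single appeal to Proposition~\ref{prop:omega-stable-pseudo}, which is what that proposition is for.
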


\begin{proof}
As $\sM$ is $\aleph_0$-stable, $A \cap X $ has a pseudo-closure $Y$ that is a subset of $X$ by Proposition~\ref{prop:induced-1}. Suppose $\sD_\beta(A, X) = \emptyset$ for all $\alpha< \beta < \dim X$. Then either $\dim Y \leq \alpha$ or $\dim Y = \dim X$. If $\sD_\alpha(A, X)$ is infinite up to almost equality, then $\dim Y > \alpha$, and so $\dim Y= \dim X$. The latter implies $A$ is pseudo-dense in $X$ by Proposition \ref{prop:induced-1}. Thus we get statement (1). 

Now suppose $X_\alpha^1, \ldots, X_\alpha^n$ and $A'$ are as stated in (2). Since $A'$ is a subset of $A$, $\sD_\beta(A', X)$ is a subset of $\sD_\beta(A, X)$ for all $\beta$. 
So in particular, $\sD_\beta(A', X) = \emptyset$ for all $\alpha< \beta < \dim X$.  Suppose $X_\alpha$ is an element of $\sD_\alpha(A', X)$. 
Then $A$ is also pseudo-dense in $X_\alpha$ and so $X_\alpha =_{\text{a}} X_\alpha^i$ for some $i \in \{1, \ldots, n\}$. 
As $X_\alpha^i \cap A' =\emptyset$, $X_\alpha^i$ and  $X_\alpha$ are both almost irreducible, and $\dim X_\alpha^i = \dim X_\alpha$, it follows from Lemma \ref{lem: basicfacts} that $A'$ is not pseudo-dense in $X_\alpha$, which is absurd. 
Thus, $$\sD_\beta(A', X)=\emptyset \quad \text{for all } \alpha\leq \beta < \dim X.$$ If $A'$ is pseudo-dense in $X$, then clearly $A$ is. Suppose $A'$ is not pseudo-dense in $X$. Then $A'\cap X$ has a pseudo-closure $Y'$ with $\dim Y'< \dim X$. It follows that $A$ has a pseudo-closure $Y$ that is a subset of $Y'\cup X_\alpha^1\cup \ldots \cup X_\alpha^n$. Then \[\dim Y \leq \max(\dim Y',\alpha) < \dim X,\] and so $A$ is not pseudo-dense in $X$. We have thus obtained all the desired conclusions in (2).
\end{proof}

\noindent Lemma~\ref{lem:wink-1} is hardly useful if the purpose is defining pseudo-denseness for a general $\aleph_0$-stable theory. 
The issue is that many of the objects involved in the previous lemma are not definable.
However, many of them are definable under the additional assumption of $\aleph_0$-categoricity. 
We recall a number of facts about  $\aleph_0$-categorical and $\aleph_0$-stable theories.

\begin{fact} \label{fact:aleph0categoricalaleph0stable}
The first two statements below only require $\aleph_0$-categoricity:
\begin{enumerate}
    \item If $T$ has only infinite models, then $T$ is complete.
    \item For all finite $x$, there are finitely many formula $\varphi(x)$ up to $T$-equivalence.
    \item $T$ defines multiplicity.
      \item (\cite{CHL}, Theorem 5.1)  $\sM$ has finite Morley rank. That is, for all finite $x$, $\dim M^x < \omega$.
      \item (\cite{CHL}, Theorem 6.3)  If $x$ is a finite tuple of variables, and $p \in S^x( \sM)$, then $p$ is definable over an element of  $M^x \times M^x$. 
\end{enumerate}
\end{fact}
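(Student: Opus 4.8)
\textbf{Proof proposal for Fact~\ref{fact:aleph0categoricalaleph0stable}.}

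The plan is to collect these five items by citing the standard theory of $\aleph_0$-categorical and $\aleph_0$-stable structures, and filling in only the very short arguments. For (1), I would invoke the \L o\'s--Vaught test: an $\aleph_0$-categorical theory with no finite models is complete, since all countable models are isomorphic, hence elementarily equivalent, and by L\"owenheim--Skolem any two models have countable elementary substructures. For (2), I would use the Ryll-Nardzewski theorem: $T$ is $\aleph_0$-categorical if and only if for each $n$ there are only finitely many complete $n$-types over $\emptyset$; since every formula $\varphi(x)$ is (up to $T$-equivalence) a finite disjunction of such types -- equivalently, $\varphi(x)$ is determined by the set of types it belongs to -- there are only finitely many formulas in the finite tuple $x$ up to $T$-equivalence. (Here I use that $L$ has finitely many sorts, so that ``a finite tuple of variables'' makes sense with only finitely many sort-patterns of each length.)

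For (3), defining multiplicity: by (2), for each formula $\varphi(x,y)$ there are, up to $T$-equivalence, only finitely many formulas $\psi(x)$ with free variables among $x$; in particular the Morley rank and Morley degree of $\varphi(\sM,b)$ depend only on which of these finitely many formulas $\psi(x)$ satisfy $\psi(\sM) \subseteq \varphi(\sM,b)$ (and conversely), which is a definable condition on $b$. More directly, since there are only finitely many formulas $\varphi_1(x,y),\dots,\varphi_N(x,y)$ up to equivalence with free variables in $xy$, the partition of $M^y$ according to the isomorphism type of the Boolean algebra of definable subsets of $\varphi(\sM,b)$ is definable, and Morley rank and degree are computed from this finite data; hence the set $\mu_{\alpha,n}(\sM) = \{b : \dim\varphi(\sM,b) = \alpha,\ \mult\varphi(\sM,b) = n\}$ is a union of some of these finitely many pieces, so it is definable. (This uses finite Morley rank from (4) to know $\alpha$ ranges over a finite set, so the $\mu_{\alpha,n}$ are genuine formulas.) Items (4) and (5) I would simply cite: (4) is \cite[Theorem 5.1]{CHL}, and (5) is \cite[Theorem 6.3]{CHL}; no further argument is needed beyond noting that their hypotheses ($\aleph_0$-stable plus $\aleph_0$-categorical, finitely many sorts) match ours.

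The only mild subtlety -- and the step I would be most careful about -- is making the argument for (3) genuinely rest on $\aleph_0$-categoricity rather than silently using $\aleph_0$-stability twice; the clean way is: $\aleph_0$-categoricity gives finitely many formulas in each finite tuple of variables (item (2)), and $\aleph_0$-stability gives that Morley rank is ordinal-valued and, via (4), finite on each $M^x$. Combining, for fixed $\varphi(x,y)$ the map $b \mapsto (\dim\varphi(\sM,b), \mult\varphi(\sM,b))$ has finite range and constant value on each cell of a finite $\emptyset$-definable partition of $M^y$ (the partition induced by the finitely many formulas $\varphi'(x,y)$ with $\varphi'(\sM,b)\subseteq\varphi(\sM,b)$), so each fibre is $\emptyset$-definable. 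This gives the $L$-formulas $\mu_{\alpha,n}(y)$ required by the definition of ``$T$ defines multiplicity'' from Section~\ref{section:aleph-base}. Everything else is a direct citation, so there is no real obstacle.
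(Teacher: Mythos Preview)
Your proposal is correct and follows the standard route; the paper itself offers no proof at all, stating this as a \texttt{Fact} and simply citing \cite{CHL} for items (4) and (5). So there is nothing to compare against.

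One small comment on your argument for (3): the cleanest justification that Morley rank and degree of $\varphi(\sM,b)$ depend only on $\tp(b)$ is automorphism invariance in a saturated model, combined with the fact that rank and degree are preserved under elementary extensions in $\aleph_0$-stable theories. Your formulation in terms of ``the isomorphism type of the Boolean algebra of definable subsets'' is slightly imprecise, since Morley rank is computed using definable subsets with \emph{arbitrary} parameters, not just $b$; but once you pass to the partition of $M^y$ by complete $\emptyset$-types (which is finite and $\emptyset$-definable by Ryll--Nardzewski), the automorphism argument closes the gap immediately. With that tweak, your sketch is exactly what one would expect for these well-known facts.
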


\noindent We now prove a key lemma that does not hold outside of the $\aleph_0$-categorical setting.  An $L$-formula $\psi(x,z)$ is {\bf representative} for the tuple of variables $x$ if it defines in every $\sM \models T$ a representative collection of almost irreducible sets. 

\begin{lem} \label{Lemma: representativeformula}
    For each finite tuple of variables $x$,  there is a representative formula $\psi(x,z)$.
\end{lem}
 
\begin{proof}

Fix $\sM \models T$ and a finite tuple $x$ of variables.  We claim that every almost irreducible subset $X$ of $M^x$ is almost equal to a subset $X'$ of $M^{x}$ such that  $X'$ is definable over an element of $M^w = M^x \times M^x$. 

Let $p \in S^x(M)$ be the generic type of $X$ and $p^\mathrm{eq}$ the unique element of $S^x(M^\mathrm{eq})$ extending $p$.  Fact \ref{fact:aleph0categoricalaleph0stable}(5), gives us $d \in M^w = M^x \times M^x$ such that $p$ is definable over $d$. Then $p^\mathrm{eq}$ is definable over $d$ and therefore stationary over $\acl^\mathrm{eq}(d)$. Hence, 
$$q = p^\mathrm{eq} | S^x(\acl^\mathrm{eq}(d)) \text{ has } \mult(q) = 1.$$ 
Let $X'' \subseteq M^x$ be defined by a formula in $q$ such that $X''$ has minimal Morley rank and degree. Then $X''$ is $\sM^\mathrm{eq}$-definable over 
$\acl^\mathrm{eq}(d)$
and $X'' =_{\text{a}}X$.
Let $X''_1, \ldots, X''_k$ are the finitely many conjugates of $X''$ by $\text{Aut}(\sM^\mathrm{eq}\slash d)$, and set $X' = \bigcap_{i =1}^k X''_i$. It is easy to check that $X'$ is definable over $d$ and $X'=_{\text{a}} X$. This completes the proof of the claim.

By Fact \ref{fact:aleph0categoricalaleph0stable}(2) there are finitely many formulas  $\psi_1(x,w), \ldots, \psi_l(x,w)$ such that every $L$-formula in variables $(x,w)$ is $T$-equivalent to one of these. Through routine manipulation, we can get a finite tuple $z$ of variables and a formula $\psi(x,z)$ and such that for all $i \in I$ and $d \in M^w$, there is  $c \in M^z$ with $\psi_i(\sM, d) = \psi(\sM,c)$. Using the claim, if  $(X_c)_{c \in M^z}$  is the family of subsets of $M^x$ defined by $\psi(x,z)$, then every almost irreducible $X$ is almost equal to $X_c$ for some $c \in M^z$. 

Finally, by Fact~\ref{fact:aleph0categoricalaleph0stable}(3), $T$ defines multiplicity. So we can modify $\psi(x,z)$ to ensure that every member of the family  $(X_c)_{c \in M^z}$ is almost irreducible. The result is a representative formula for $x$ because by Fact~\ref{fact:aleph0categoricalaleph0stable}(1), $T$ is complete.
\end{proof}

 \noindent A {\bf function up-to-permutation} from $Z \subseteq M^z$ to $M^w$ is a relation $f \subseteq Z \times M^w$ satisfying the following two conditions:
\begin{enumerate}
    \item For all $c \in Z$, there is $d \in M^w$ such that $(c,d) \in f$. 
    \item If $(c,d)$ and  $(c,d')$ are both in $f$, then $d$ is a permutation of $d'$.
\end{enumerate}
A function up-to-permutation $f$ determines an ordinary function  $\tilde{f}: Z \to M^w \slash \sim$, where $\sim$ is the equivalence relation defined by permutations. We are interested in $f$ instead of $\tilde{f}$, as it is possible that $f$ is $\sM$-definable while $\tilde{f}$ is only $\sM^\eq$-definable. For $C \subseteq Z$, we will write $f(Z)$ for the set
$$ \{ d \in M^w \mid  \text{ there is } c \in C \text{ such that } (c,d) \in f \}. $$ It is easy to observe that $|\tilde{f}(Z)|\leq |f(Z)| \leq |w|! |\tilde{f}(Z)|$ with $\tilde{f}$ as above. In particular, $f(Z)$ is finite if and only if $\tilde{f}(Z)$ is. 

\medskip \noindent Recall that $T$ \textbf{weakly eliminates imaginaries} if for all $\sM\models T$ and all $b\in \sM^{\text{eq}}$, there exists $a\in \sM$ such that $a\in \acl^{\text{eq}}(b)$ and $b\in \dcl^{\text{eq}}(a)$. The following Fact~\ref{fact: weakeliminationofimaginary} only uses the assumption that $T$ is complete.

\begin{fact} \label{fact: weakeliminationofimaginary}  Suppose $T$ weakly eliminates imaginaries.
For all $\sM \models T$, 0-definable $Z \subseteq M^z$, and 0-definable equivalence relation $R \subseteq Z^2$, there is a tuple  $w$ of variables  and a $0$-definable function up-to-permutation from $Z$ to $M^w$ such that $c R c'$ in $Z$ if and only if $f(c) = f(c')$. Moreover, the choice of formula defining $f$ can be made depending only on the choices of $L$-formulas defining $Z$ and $R$ but not on the choice of $\sM$.
\end{fact}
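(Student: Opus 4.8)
The plan is to reduce the statement to a purely syntactic consequence of weak elimination of imaginaries plus $\aleph_0$-categoricity. Recall that $T$ weakly eliminates imaginaries means that for every $e\in\sM^\eq$ there is a real tuple $b$ with $e\in\dcl^\eq(b)$ and $b\in\acl^\eq(e)$. Apply this to the imaginary element $e = c/R$, the class of a given $c\in Z$ under $R$; this supplies a real tuple $b_c\in M^w$ (for some $w$ whose length depends only on the formulas defining $Z$ and $R$, by $\aleph_0$-categoricity, as we explain below) such that $e\in\dcl^\eq(b_c)$ and $b_c\in\acl^\eq(e)$. Since $b_c\in\acl^\eq(e)$, the orbit of $b_c$ under $\Aut(\sM/e)$ is finite, and all its conjugates are also $\dcl^\eq$-equivalent to $e$; let $f\subseteq Z\times M^w$ be the relation whose fiber over $c$ is exactly this finite orbit. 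Because $e = c/R$ is definable without parameters from $c$ (it is an $\eq$-sort element obtained by a $0$-definable quotient), $f$ is $0$-definable: membership $(c,d)\in f$ is expressed by ``$d$ lies in the $\acl^\eq$-orbit over $c/R$ of a tuple that interdefines with $c/R$'', which unwinds to a first-order condition over the empty set, with the relevant bound on orbit size provided by $\aleph_0$-categoricity.

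The next step is to verify that $f$ is a function up-to-permutation and that it separates $R$-classes. That $(c,d),(c,d')\in f$ forces $d'$ to be a permutation of $d$: we should choose $b_c$ to be a \emph{set} of size $|w|$ (a finite subset of $M$) rather than an ordered tuple, or equivalently close the orbit under coordinate permutations; then any two tuples in the fiber over $c$ enumerate the same finite set and hence are permutations of one another. That $f$ separates classes: if $cRc'$ then $c/R = c'/R$, so the construction yields literally the same imaginary $e$ and hence the same fiber, giving $f(c)=f(c')$. Conversely if $f(c)=f(c')$, pick $d$ in the common fiber; then $d\in\dcl^\eq$-interdefines with both $c/R$ and $c'/R$ (we have $c/R\in\dcl^\eq(d)$ since $d\in\dcl^\eq$-relation was arranged in the weak EI step — more precisely $c/R$ is $\Aut(\sM/d)$-fixed), so $c/R = c'/R$, i.e.\ $cRc'$. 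Some care is needed here: weak EI as stated gives $e\in\dcl^\eq(b)$ for \emph{some} $b$ in the orbit, not for every conjugate; but every conjugate $b'$ of $b$ over $e$ also satisfies $e\in\dcl^\eq(b')$ since the defining relation ``$e = \varphi(b)$'' is automorphism-invariant, so in fact every $d$ in the fiber determines $e$. This closes the argument.

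The ``moreover'' clause — uniformity of the formula defining $f$ across models of $T$ — follows from $\aleph_0$-categoricity via Fact~\ref{fact:aleph0categoricalaleph0stable}(2): once $Z$ and $R$ are pinned down by $L$-formulas, there are only finitely many $L$-formulas in the relevant free variables up to $T$-equivalence, and the bound on the size of the $\acl^\eq$-orbit over $e$ is also uniform by $\aleph_0$-categoricity (an $\aleph_0$-categorical theory has, for each formula, a uniform finite bound on the number of realizations when it is algebraic). Hence the first-order condition defining $f$ can be written down once and for all from the data of the formulas for $Z$ and $R$, and it defines the desired relation in every $T$-model. One then checks that $w$ can likewise be chosen uniformly: $\aleph_0$-categoricity guarantees $\acl^\eq(e)$, intersected with the home sort(s), has bounded size, so $w$ may be taken of fixed finite length.

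The main obstacle I anticipate is bookkeeping around weak elimination of imaginaries: the standard statement produces a real witness $b$ for each imaginary $e$ but does not a priori guarantee that the witness, or its orbit, is uniformly definable in families. The fix is exactly the passage to the finite $\acl^\eq$-orbit together with the observation that ``$e$ is $0$-definable from $c$'' (since $R$ is a $0$-definable equivalence relation and $e = c/R$), which lets us express the whole construction by a single $0$-formula in $c$ and $d$; combined with the finiteness bounds from $\aleph_0$-categoricity, this gives both the $0$-definability of $f$ in a single model and the cross-model uniformity. No genuinely new ideas beyond weak EI and $\aleph_0$-categoricity are required; the work is entirely in making the quantifier over imaginaries into a quantifier over a bounded real orbit.
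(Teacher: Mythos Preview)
The paper states this as a Fact without proof, remarking that it needs only completeness and weak elimination of imaginaries (not $\aleph_0$-categoricity, which you invoke). Your outline has the right shape, but there is a genuine gap at the step where you claim $f$ is a function up-to-permutation.

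You take the fiber $f(c)$ to be (enumerations of) the $\Aut(\sM/e)$-orbit of a weak-EI witness $b_c$, but two conjugates of $b_c$ over $e$ need not be coordinate permutations of one another: if $b_c=(a_1,a_2)$ has conjugate $(a_3,a_4)$ with all $a_i$ distinct, these are unrelated by permutation. Your proposed fixes do not repair this. Closing under coordinate permutations still leaves $(a_1,a_2)$ and $(a_3,a_4)$ non-permutations of each other. Taking $f(c)$ to be all enumerations of the orbit $\{b_1,\dots,b_n\}$ does make any two enumerations block-permutations, hence coordinate permutations---but this forces $|w|=n\cdot|b_c|$, and the orbit size $n$ may vary with $c$, so no single $w$ works. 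The missing idea is a uniform-length encoding: first use compactness (from completeness alone) to get a single $0$-definable $\psi(y,s)$ with $y\in M^m$ and $|\psi(\cdot,e)|\le N$ for all $e\in Z/R$, functional in $s$; then set $|w|=m\cdot N!$ and let $f(c)$ consist of all tuples in $(M^m)^{N!}$ whose underlying multiset contains each element of $\psi(\cdot,c/R)$ with multiplicity exactly $N!/n$ where $n=|\psi(\cdot,c/R)|$. Since $n\le N$ divides $N!$, this is well-defined and $0$-definable by a case split on $n$, and any two such tuples are permutations of each other. This also removes the need for $\aleph_0$-categoricity, matching the paper's stated hypothesis.
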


Below is the key proposition for this section. The main geometrical idea of the proof is already contained in Lemma~\ref{lem:wink-1}, we just need to check that everything can be carried out definably.

\begin{prop}\label{prop:wink-2} Suppose $T$ weakly eliminates imaginaries.
Then $T'$ defines pseudo-denseness over $T$ if and only if $T'$ eliminates $\exists^\infty$.
\end{prop}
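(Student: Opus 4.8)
The plan is to route everything through the notion of induced rank. Since $T$ is $\aleph_0$-categorical and $\aleph_0$-stable it defines multiplicity (Fact~\ref{fact:aleph0categoricalaleph0stable}(3)), hence defines Morley rank, so the induced rank $\dim'$ on $T'$ is a well-defined ordinal rank. By Proposition~\ref{prop: Defininginduceddimension}, $T'$ defines pseudo-denseness over $T$ if and only if $T'$ defines induced rank; so it suffices to prove that $T'$ defines induced rank if and only if $T'$ eliminates $\exists^\infty$.

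Assume first that $T'$ defines induced rank, and let $\varphi'(x,z)$ be an $L'$-formula with $x$ a single variable. Since the pseudo-closure of a finite set is the set itself (Lemma~\ref{lem: basicfacts}(1)), $\dim'\varphi'(\sM',c)\le 0$ precisely when $\varphi'(\sM',c)$ is finite; equivalently, $\varphi'(\sM',c)$ is infinite precisely when $\dim'\varphi'(\sM',c)\ge 1$. As $\dim'\varphi'(\sM',c)$ is the Morley rank of an $\sM$-definable subset of $M^x$, it ranges over the finite set $\{-\infty,0,1,\dots,\dim M^x\}$ (Fact~\ref{fact:aleph0categoricalaleph0stable}(4)); hence "$\dim'\varphi'(\sM',c)\ge 1$" is $T'$-equivalent to a finite disjunction of the $L'$-formulas defining induced rank. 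Thus $\{c:\varphi'(\sM',c)\text{ is infinite}\}$ is uniformly $L'$-definable, which is exactly elimination of $\exists^\infty$ for $T'$ (the nontrivial direction being the usual compactness argument in a saturated model of $T'$).

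Now assume $T'$ eliminates $\exists^\infty$; this is the substantial direction. Let $\sC$ be the collection of almost irreducible subsets of $T$-models. Because $T$ is $\aleph_0$-stable, every definable set agrees, up to a subset of strictly smaller rank, with a finite union of Morley-degree-one (hence almost irreducible) sets, so $\sC$ is a pseudo-cell collection for $T$; and $T$ defines $\sC$-membership and dimension since it defines multiplicity. By Proposition~\ref{Prop: Definingpdoverpc} it suffices to show $T'$ defines pseudo-denseness over $\sC$: given an $L'$-formula $\varphi'(x,y)$ and an $L$-formula $\varphi(x,z)$, we must define, uniformly in $(b,c)$ with $X=\varphi(\sM',c)$ almost irreducible, the condition "$A=\varphi'(\sM',b)$ is pseudo-dense in $X$", which by Lemma~\ref{lem:induced-2}(2) is the condition $\dim'(A\cap X)=\dim X$. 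We do this by induction on $d=\dim X$, which is bounded by Fact~\ref{fact:aleph0categoricalaleph0stable}(4), running the recursive procedure of Lemma~\ref{lem:wink-1}. Fix the uniformly definable family $\sD=(\psi(x,w))_w$ of almost irreducibles from Lemma~\ref{Lemma: degreeonefamily}. At a level $\alpha<d$, with a set $A_k$ obtained from $A$ by finitely many earlier steps, the $\sD$-members recorded in $\sD_\alpha(A_k,X)$ are picked out by an $L'$-condition on $w$: "$\dim\psi(\sM,w)=\alpha$" and "$\psi(\sM,w)$ is almost a subset of $X$" are $L$-definable (as $T$ defines dimension), while "$A_k$ is pseudo-dense in $\psi(\sM',w)$" is $L'$-definable by the induction hypothesis, since $\dim\psi(\sM,w)=\alpha<d$ and $A_k$ stays inside a uniformly definable $L'$-family. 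Using weak elimination of imaginaries for $T$ and Fact~\ref{fact: weakeliminationofimaginary} to turn the $L$-definable equivalence relation of almost equality into a definable function up to permutation, "$\sD_\alpha(A_k,X)$ is infinite up to almost equality" becomes "an associated $L'$-definable set is infinite", which is $L'$-definable since $T'$ eliminates $\exists^\infty$ (and likewise "it has exactly $m$ classes"). In the infinite case $A$ is pseudo-dense in $X$; in the finite case we move to level $\alpha-1$ with $A_{k+1}=A_k\setminus U_\alpha$, where $U_\alpha$ is the $L'$-definable union of all qualifying $\psi(\sM',w)$, and check that $A_{k+1}$ still satisfies the conclusions of Lemma~\ref{lem:wink-1}(2). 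After at most $d$ steps we reach a set whose pseudo-density in $X$ is equivalent to its meeting $X$ — an $L'$-definable condition — and pseudo-density of $A$ in $X$ is equivalent to it by the lemma.

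The main obstacle is precisely this final bookkeeping: keeping the auxiliary sets $A_k$ and $U_\alpha$ inside uniformly definable $L'$-families throughout the recursion, and carrying out the step "discard the finitely many almost-equality classes" definably despite being unable to name representatives canonically. This is exactly what the uniform family of Lemma~\ref{Lemma: degreeonefamily}, the functions up to permutation supplied by weak elimination of imaginaries in Fact~\ref{fact: weakeliminationofimaginary}, and the elimination of $\exists^\infty$ in $T'$ are introduced to handle, so the work is to assemble them with care rather than to add a new ingredient.
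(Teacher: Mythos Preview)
Your overall architecture is the paper's: reduce via Proposition~\ref{prop: Defininginduceddimension} (since $T$ defines multiplicity) and, for the hard direction, run the descent of Lemma~\ref{lem:wink-1} inside the uniform family of Lemma~\ref{Lemma: degreeonefamily}, using weak elimination of imaginaries and elimination of $\exists^\infty$ to detect finiteness of $\sD_\alpha$ up to almost equality. The forward direction via induced rank is a clean shortcut and is correct.

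There is, however, one genuine gap in the backward direction. You set $A_{k+1}=A_k\setminus U_\alpha$ where $U_\alpha$ is the union of \emph{all} qualifying $\psi(\sM',w)$. This $U_\alpha$ can be strictly larger than the union $\bigcup_i X_\alpha^i$ of representatives that Lemma~\ref{lem:wink-1}(2) asks you to remove: each qualifying $\psi(\sM',w)$ agrees with some $X_\alpha^i$ only up to a set of rank $<\alpha$, and there may be infinitely many such $w$, so the accumulated ``error'' $U_\alpha\setminus\bigcup_i X_\alpha^i$ need not be contained in any $\sM$-definable set of rank $<\dim X$. Then $A_k\setminus U_\alpha$ can fail to be pseudo-dense in $X$ even when $A_k$ is, breaking the equivalence you need to carry forward. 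Concretely, if the family $\sD$ contains, for each point $p\in M^x$, a set almost equal to a fixed $X_\alpha^1$ but containing $p$, then $U_\alpha$ can swallow all of $X$.

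The paper avoids this by \emph{not} taking the full union. Using elimination of $\exists^\infty$ together with the function up to permutation from Fact~\ref{fact: weakeliminationofimaginary}, one gets a uniform bound $n$ on the number of almost-equality classes in the finite case, and then parameterises $A_{k+1}$ by a tuple $(d_1,\dots,d_n)$ of representatives, setting $A_{k+1}=A_k\setminus\bigcup_{j\le n}\psi(\sM',d_j)$. This keeps $A_{k+1}$ in a uniformly $L'$-definable family (now with longer parameter tuples), matches exactly the $A'$ of Lemma~\ref{lem:wink-1}(2), and lets the induction go through. You already name all of these ingredients; the fix is to use bounded representatives rather than the union, and to carry the induction over \emph{all} $L'$-formulas (so that the growing parameter tuples are harmless), exactly as the paper does with its lexicographic induction on $(\gamma,\alpha)$.
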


\begin{proof} 
Let $\sM' \models T'$ and $\sM = \sM' | L$. In this proof, everything will be uniform in $\sM'$: when we say ``definable'', we mean ``definable by a formula which does not depend on the choice of $\sM'$''. Alternatively, we can obtain this uniformity for free by working in a sufficiently saturated model. 

For the forward direction, suppose  $T'$ defines pseudo-denseness. Let $(X'_b)_{b \in M^y}$ be a family of subsets of $M^x$ defined by an $L'$-formula $\varphi'(x,y)$. Our job is to show that the set
$$   \{ b \in M^y: X'_b \text{ is infinite} \}$$ 
is definable by an $L'$-formula.

Using Lemma~\ref{Lemma: representativeformula}, we get a representative formula $\psi(x,z)$ for the tuple of variables $x$. Let  $(X_c)_{c \in M^z}$ be the family defined by $\psi(x,z)$. Note that $X'_b$ is pseudo-dense in each of the almost irreducible components of its pseudo-closure. So $X'_b$ is infinite if and only if there is $c \in M^z$ such that
$$ X'_b \text{ is pseudo-dense in } X_c \text{ and } \dim X_c>0. $$
As $T'$ defines pseudo-denseness, the set of  pairs $(b,c)$ with $X'_b$ pseudo-dense in $X_c$ is definable by an $L'$-formula. 
By Fact \ref{fact:aleph0categoricalaleph0stable}, $T$ defines Morley rank, so the set of $c \in M^z$ with $\dim X_c>0$ is definable by an $L$-formula. Hence, we get the desired conclusion. 

For the backward implication, suppose $T'$ eliminates $\exists^\infty$. Let $(X'_b)_{b \in M^y}$ and $(X_c)_{c \in M^z}$ be families of subsets of $M^x$ defined by an $L'$-formula $\varphi'(x,y)$ and and $L$-formula $\psi(x,z)$, respectively. Set
$$  P=  \{(b,c) \in M^{(y,z)} \mid X'_b \text{ is pseudo-dense in } X_c \}.   $$
We need to show that $P$ is definable by an $L'$-formula.

We first reduce to the special case where  $\psi(x,z)$ is a representative formula for the tuple of variables $x$. Using Lemma~\ref{Lemma: representativeformula}, we get a representative formula $\delta(x,w)$ for the tuple of variables $x$. Let $(X_d)_{d \in M^w}$ be the family of subsets of $M^x$ defined by $\delta(x,w)$.  For  $b \in M^y$ and $c \in M^z$, $(b, c) \in P$  if and only if $X'_b$ is pseudo-dense in $X_d$ for all $d \in M^w$ with $X_d\subseteq_{\text{a}} X_c$ and $\dim X_d = \dim X_c$. Again,  $T$ defines Morley rank, so $T$ defines the relation of being almost a subset.  Hence, we can deduce the general case from this special case.

We decompose $P$ into finitely many sets, which we will then show to be definable using induction. For $\gamma \leq  \dim M^x$, set
$$ P^\gamma= \{ (b,c) \in P\mid  \dim X_c =\gamma\}.$$
Then $P= \bigcup_{\gamma\leq \dim M^x} P^\gamma$. Therefore, as $\dim(M^x)$ is finite, it suffices to show that for all $\gamma$, $P^\gamma$ is definable. 

We will proceed by induction on $\gamma$, uniformly with respect to the choice of the formula $\varphi(x,y)$. For $\gamma =0$, by Lemma~\ref{lem: basicfacts}(1), $X'_b$ is pseudo-dense in a finite set if and only $X'_b$ contains that finite set. So $(b,c) \in M^{(y,z)}$ is in $P^0$ if and only if $\dim(X_c) =0$ and $X_c \subseteq X'_b$. Hence, $P^0$ is definable by an $L'$-formula.

Now, assume $\gamma>0$ and we have proven the statement for all smaller values of $\gamma$. Let $\sD = (X_c)_{c \in M^z}$ be the representative collection of almost irreducible subsets of $M^x$ defined by $\psi(x,z)$. Toward applying Lemma \ref{lem:wink-1},  set
$$D_{\alpha, b, c} = \{ d \in M^z \mid (b,d) \in P^\alpha \text{ and } X_d \subseteq_{\text{a}} X_c \} $$
for each ordinal $\alpha$ and each $(b,c) \in M^y \times M^z$.
In other words, if $\sD_\alpha(X'_b, X_c)$ is defined as in Lemma \ref{lem:wink-1}, then $d \text{ is in } D_{\alpha, b, c}$ if and only if $X_d  \text{ is in }\sD_\alpha(X'_b, X_c).$ For all $\alpha<\gamma$, since $T$ defines Morley rank and $P^{\alpha}$ is definable by the inductive hypothesis, the family $(D_{\alpha, b, c})_{(b,c) \in M^{(y,z)}}$ is definable by an $L'$-formula.

For $\alpha< \gamma $, set
$$ P^\gamma_\alpha = \{  (b,c) \in P^\gamma \mid D_{\beta, b, c} =\emptyset \text{ for all } \alpha <  \beta < \gamma \}. $$
Then $P^\gamma = P^\gamma_{\gamma-1}$. Hence, we can get the desired conclusion by showing the stronger fact that $P^\gamma_\alpha$ is definable by an $L'$-formula for all $\alpha< \gamma \leq  \dim M^x$.

Now we proceed by an inner induction on $\alpha$.
When $\alpha = 0$, we get from Lemma \ref{lem:wink-1} that $(b,c) \in M^{(y,z)}$ is in $P^\gamma_0$ if and only if $$\dim X_c = \gamma,\quad D_{\beta, b, c} = \emptyset \text{ for all }0< \beta< \gamma, \quad \text{ and } X'_b \text{ is infinite. } $$
Hence, $P^\gamma_0$ can be defined by an $L'$-formula, by the assumption that $T'$ eliminates $\exists^\infty$ and  the fact that $T$ defines Morley rank. 

Suppose $0< \alpha< \gamma$ and we have shown the statement for all smaller values of $\alpha$. As noted above, for all $\beta<\gamma$, 
the families  $( D_{\beta, b, c})_{(b,c)\in M^y \times M^z}$  are definable by $L'$-formulas. Recall that $T$ defines Morley rank and weakly eliminates imaginaries. By Fact \ref{fact: weakeliminationofimaginary},  there is $w$ and a $L$-definable function up-to-permutation $f$ from $Z$ to $M^w$, such that for all $d_1$ and $d_2$ in $M^z$,
$$ f(d_1) = f(d_2)   \text{ if and only if } X_{d_1} =_{\text{a}} X_{d_2}.$$
In particular, the family $( f (D_{\alpha,b,c}))_{(b,c) \in Y \times Z_\gamma}$ can be defined by an $L'$-formula. As $T'$ eliminates $\exists^\infty$, there is $n$ such that  
$$|f (D_{\alpha,b,c})| > n|w|! \text{ implies } f (D_{\alpha,b,c}) \text{ is infinite}.$$  
Now let $\tilde{Y}$  be the set of $\tilde{b}=(b,c, d_1, \ldots, d_n)$ in $M^y \times M^z \times M^z\times \ldots \times M^z$ such that the following properties hold:
\begin{enumerate}
    \item $\dim X_c = \gamma$ and $D_{\beta, b, c} = \emptyset$ for all $\alpha< \beta< \gamma$.
    \item  $f (D_{\alpha,b,c})$ is finite.
    \item   $\dim X_{d_i} = \alpha $ and  $X'_b$ is pseudo-dense in $X_{d_i}$ for $i \in \{1, \ldots, n\}$.
    \item If $\dim X_{d} = \alpha$ and $X'_b$ is pseudo-dense in $X_d$ for some $d \in M^z$, then $X_d=_{\text{a}}X_{d_i}$ for some $i \in \{1, \ldots, n\}$.
\end{enumerate}
By the inductive hypothesis and the fact that $T$ defines Morley rank, $\tilde{Y}$ is definable by an $L'$-formula. For each $\tilde{b} \in \tilde{Y}$, set
$$ \tilde{X}'_{\tilde{b}} =    X'_b \setminus \bigcup_{i=1}^n X_{d_i}.$$
For $\tilde{b} \in M^{\tilde{y}} \setminus \tilde{Y}$, let $\tilde{X}'_{\tilde{b}}$  be the empty set.
Then, the family $( \tilde{X}'_{\tilde{b}} )_{\tilde{b} \in M^{\tilde{y}}}$ is definable by an $L'$-formula $\tilde{\varphi}'(x, \tilde{y})$. We obtain  $\tilde{P}^{\gamma}_{\alpha-1}$  from $\tilde{\varphi}'(x,\tilde{y})$ and $\psi(x,z)$ in the same fashion as we obtain $P^\gamma_{\alpha-1}$ from $\varphi'(x,y)$ and $\psi(x,z)$.
The induction hypothesis, applied to the formula $\tilde{\varphi}'(x, \tilde{y})$, 
implies that $\tilde{P}^{\gamma}_{\alpha-1}$ is definable. From Lemma \ref{lem:wink-1}, 
$(b,c)$ is in  $P^\gamma_{\alpha}$ if and only if $\dim X_c = \gamma$ and $D_{\beta, b, c} = \emptyset$ for all $\alpha< \beta< \gamma$ and either of the following hold:
\begin{enumerate}
    \item  $f (D_{\alpha,b,c})$ is infinite.
    \item  There are $d_1, \ldots, d_n$ in $M^z$ with $\tilde{b}=(b,c, d_1, \ldots, d_n)\in\tilde{Y}$ and 
    $ (\tilde{b},c) \in \tilde{P}^{\gamma}_{\alpha-1}$.
\end{enumerate}
Thus $P^\gamma_{\alpha}$ is definable, which completes the proof.
\end{proof}

\noindent We get the main result of this section, which is a restatement of Theorem~\ref{thm:2}:

\begin{thm}\label{thm:psd1}
Suppose $T_\cap$ is complete, $\aleph_0$-stable, and $\aleph_0$-categorial.
If $T_\cap$ weakly eliminates imaginaries, and each $T_i$ eliminates $\exists^\infty$, then $T^{*}_\cup$ exists. 
If $T_i^{\eq}$ eliminates $\exists^\infty$ for all $i$, then $T^{*}_\cup$ exists.
\end{thm}

\begin{proof}
The first statement follows from Theorem~\ref{thm:approxinterp}, Proposition~\ref{prop:induced-1}, and Proposition~\ref{prop:wink-2}. 
The second statement then follows from the first statement and Remark~\ref{rem: robust}(4): if $T_i^{\eq}$ eliminates $\exists^\infty$, so does $T_i^{\cap-\eq}$, so we may assume $T_\cap$ eliminates imaginaries.
\end{proof}

\noindent The conditions of Theorem~\ref{thm:psd1} are satisfied when $L_\cap$ is the empty one-sorted language and $T_\cap$ is the theory of infinite sets.
So we recover Winkler's theorem on disjoint unions of theories~\cite{Winkler}. 
Using similar ideas, we recover Winkler's theorem on generic Skolemizations; see Section~\ref{sec:examples}.

\medskip \noindent The assumptions of Corollary~\ref{cor:trivial-theory} are very strong, but it is applicable more often then one might expect.
For example, it applies to the random graph and many other combinatorial Fra\"iss\'e limits, as presented in Section~\ref{sec:examples}.

\begin{cor}
\label{cor:trivial-theory}
Suppose $T_\cap$ is complete and each $T_i$ is interpretable in the theory of infinite sets.
Then $T^*_\cup$ exists.
\end{cor}

\begin{proof}
The theory of infinite sets is $\aleph_0$-stable, $\aleph_0$-categorical,  and eliminates $\exists^\infty$ in $T^{\eq}$.
Each of these three properties is preserved under interpretations, so if $T_i$ is interpretable in the theory of infinite sets then $T$ is $\aleph_0$-stable, $\aleph_0$-categorical, and $T_i^{\eq}$ eliminates $\exists^\infty$.
So the result follows from Theorem~\ref{thm:psd1}.
\end{proof}

\noindent The theory $T_q$ of vector spaces over a finite field with $q$ elements is $\aleph_0$-stable, $\aleph_0$-categorical, and weakly eliminates imaginaries.
Thus any theory $T'$ extending $T_q$ defines pseudo-denseness if and only if it eliminates $\exists^\infty$.
This does not generalize to vector spaces over characteristic zero fields, which are $\aleph_0$-stable and weakly eliminate imaginaries, but are not $\aleph_0$-categorical.
For example, let $T$ be the theory of torsion-free divisible abelian groups (vector spaces over $\mathbb{Q}$).
Let $T'$ be $\ACF_0$, note that $T'$ is an expansion of $T$.
Then $T'$ does not define pseudo-denseness over $T$.
Suppose $\monster'$ is an $\aleph_1$-saturated model of $T'$.
Let
$$ L = \{ (a,b,c) \in \mathbf{M}^3 \mid ab = c \} $$
and consider the definable family $\{ L_a \mid a \in M \}$ where $L_a = \{ (b,c) \in \mathbf{M}^2 \mid ab = c \} $.
We leave the easy verification of Lemma~\ref{lem:Q-pdense} to the reader:

\begin{lem}
\label{lem:Q-pdense}
Fix $a \in \mathbf{M}$.
Then $L_a$ is pseudo-dense in $\mathbf{M}^2$ if and only if $a \notin \mathbb{Q}$.
\end{lem}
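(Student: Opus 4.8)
The plan is to unwind the definitions. Here $T = \Th(\monster')\res L$ is the theory of torsion-free divisible abelian groups, equipped with the trivial... no — wait, since $T$ is $\aleph_0$-stable, we take $\dim$ to be Morley rank on $T$ (as a $\qq$-vector space, $\monster'\res L$ is strongly minimal, so Morley rank of $\mathbf{M}^2$ is $2$). An $L$-definable subset $X\subseteq \mathbf{M}^2$ is pseudo-dense in $\mathbf{M}^2$ precisely when $\dim X = 2$, i.e.\ $X$ is not contained in any $L$-definable set of Morley rank $\leq 1$ (using that $\mathbf{M}^2$ has Morley degree $1$, by Lemma~\ref{lem:induced-1}). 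So I must determine, for a fixed $a\in\mathbf{M}$, whether the line $L_a = \{(b,c)\in\mathbf{M}^2 : ab = c\}$ (a subset of the $\qq$-vector space, definable in $\monster'$ but a priori not in the reduct) is contained in a proper $L$-definable subset, equivalently whether $L_a$ is Zariski-one-dimensional relative to the reduct.

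First I would recall the structure of $L$-definable subsets of $\mathbf{M}^2$: by quantifier elimination for divisible torsion-free abelian groups (equivalently $\qq$-vector spaces), every $L$-definable subset of $\mathbf{M}^2$ is a Boolean combination of cosets of $\qq$-subspaces, i.e.\ a finite union of sets of the form $V + v$ with $V$ a $\qq$-subspace of $\mathbf{M}^2$ and $v\in\mathbf{M}^2$, minus finitely many points. Such a set has Morley rank $\leq 1$ iff each $V$ appearing has $\qq$-dimension $\leq 1$. Now $L_a$ is itself a $\qq$-subspace of $\mathbf{M}^2$ when $a\in\qq$ — namely $\{(b, ab): b\in\mathbf{M}\}$ is one-dimensional over $\qq$ — so in that case $L_a$ is already $L$-definable of Morley rank $1$, hence not pseudo-dense in $\mathbf{M}^2$. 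Conversely, if $a\notin\qq$, I would argue that $L_a$ meets every one-dimensional coset $V+v$ in only finitely many points, so $L_a$ cannot be covered by finitely many such; hence $L_a$ is not contained in any $L$-definable set of Morley rank $\leq 1$, i.e.\ $\dim L_a = 2$ and $L_a$ is pseudo-dense in $\mathbf{M}^2$. The key computation: if $(b_1, ab_1)$ and $(b_2, ab_2)$ both lie in a one-dimensional coset $V + v$, then their difference $(b_1 - b_2, a(b_1-b_2))$ lies in $V$; if $b_1\neq b_2$ this forces $V$ to contain the vector $(1, a)$ (after scaling by $1/(b_1-b_2)\in\qq$), hence $V = \qq\cdot(1,a)$, hence $L_a$ itself equals $V + 0 = \qq\cdot(1,a)$... but that would say $\{(b,ab): b\in\mathbf{M}\} = \{(q, aq): q\in\qq\}$, which fails by $\aleph_1$-saturation (there are elements of $\mathbf{M}$ not in $\qq$). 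So at most one element of $L_a$ can lie in any given one-dimensional coset when $a\notin\qq$ — actually more carefully, the intersection has size $\leq 1$. This shows $L_a$ is not a subset of any finite union of one-dimensional cosets minus finite sets, completing the argument.

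The only mildly delicate point, which I would state explicitly, is why pseudo-density in $\mathbf{M}^2$ is equivalent to $\dim L_a = 2$: this is Lemma~\ref{lem:induced-1} (or directly Lemma~\ref{lem:induced-2}(2)) applied with $X = \mathbf{M}^2$, which is almost irreducible since it has Morley degree $1$. Once that is in place the verification is the elementary linear-algebra computation above. I expect no real obstacle here — the lemma is genuinely routine, as the paper indicates — so the "plan" is essentially: (i) invoke strong minimality of the reduct and QE to describe low-dimensional $L$-definable sets as finite unions of one-dimensional affine $\qq$-subspaces (up to finite sets); (ii) observe $L_a = \qq(1,a)$ is one such when $a\in\qq$; (iii) when $a\notin\qq$, use $\aleph_1$-saturation to show each one-dimensional coset meets $L_a$ in at most one point, so $L_a$ escapes every rank-$\leq 1$ set; (iv) translate back via Lemma~\ref{lem:induced-1} to the pseudo-density statement.
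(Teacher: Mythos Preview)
Your overall strategy is right, but there is a real error in step (i): you have confused $\qq$-linear dimension with Morley rank. In the strongly minimal theory of $\qq$-vector spaces, the $L$-definable subsets of $\mathbf{M}^2$ of Morley rank $1$ are (up to finite symmetric difference) finite unions of cosets of the hyperplanes $\{(x,y): px+qy=0\}$ with $(p,q)\in\qq^2\setminus\{0\}$. These have $\qq$-\emph{codimension} $1$, not $\qq$-dimension $1$; a genuine $1$-dimensional $\qq$-subspace such as $\qq\cdot(u,au)$ is countable and not definable at all. This is also why your ``scale by $1/(b_1-b_2)\in\qq$'' step is illegitimate: $b_1-b_2$ lies in $\mathbf{M}$, not in $\qq$. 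And your conclusion that $L_a$ meets each one-dimensional coset in at most one point is false for $V=\qq\cdot(u,au)$, which meets $L_a$ in countably many points.

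Once you replace ``one-dimensional $\qq$-subspace'' by ``definable hyperplane'', the computation becomes simpler and your intuition is vindicated. For $(p,q)\in\qq^2\setminus\{0\}$ and $c\in\mathbf{M}$, the point $(b,ab)$ lies on $\{px+qy=c\}$ iff $(p+qa)b=c$. If $a\notin\qq$ then $p+qa\neq 0$, so there is at most one such $b$; hence $L_a$ meets any finite union of such cosets (plus finitely many points) in a finite set, and since $L_a$ is infinite it is not contained in any rank-$\leq 1$ definable set. If $a\in\qq$, writing $a=-p/q$, the set $L_a=\{(x,y):px+qy=0\}$ is itself such a hyperplane, of Morley rank $1$. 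The translation via Lemma~\ref{lem:induced-1} in your step (iv) is fine. (The paper omits the proof entirely, so there is nothing to compare against.)
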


\noindent As $\mathbb{Q}$ is countable and infinite it cannot be a definable set in an $\aleph_1$-saturated structure.
Thus $\monster'$ does not define pseudo-denseness over $(\mathbf{M};+).$
The same argument shows that any theory expanding $T'$ does not define pseudo-denseness over $T$.

\medskip \noindent There is a natural rank \text{rk} on any $\aleph_0$-categorical theory, described in \cite[Section 2.3]{Pierre} and \cite[Section 2.2.1]{ChrHru}.
This rank is known to agree with thorn rank on $\aleph_0$-categorical structures, so it is an ordinal rank on rosy $\aleph_0$-categorical theories.
A special case of Theorem~\ref{thm:psd4} is that any expansion of the theory DLO of dense linear orders without endpoints defines pseudo-denseness over DLO with respect to $\text{rk}$ (which agrees with the usual o-minimal dimension over DLO).
This fact, together with Proposition~\ref{prop:wink-2}, and recent groundbreaking work on NIP $\aleph_0$-categorical structures \cite{Pierre, Pierre2} motivates Question~\ref{ques:NIPomega}.

\begin{question}\label{ques:NIPomega}
Suppose $T$ is NIP, $\aleph_0$-categorical, and rosy.
If $T'$ eliminates $\exists^\infty$, then must $T'$ define pseudo-denseness over $T$ (with respect to $\mathrm{rk}$)?
\end{question}

\noindent Unfortunately, $\mathrm{rk}$ does not necessarily agree with Morley rank on $\aleph_0$-stable and $\aleph_0$-categorical theories.
One might hope that an approach to Question~\ref{ques:NIPomega} would synthesize the ideas of Section~\ref{section-tame-topology} and Section~\ref{sec:stable-categorical}.

\bibliographystyle{amsalpha}
\bibliography{the}

\end{document}